\begin{document}
\theoremstyle{plain}
\newtheorem{thm}{Theorem}[section]
\newtheorem*{thm1}{Theorem 1}
\newtheorem*{thm2}{Theorem 2}
\newtheorem{lemma}[thm]{Lemma}
\newtheorem{lem}[thm]{Lemma}
\newtheorem{cor}[thm]{Corollary}
\newtheorem{prop}[thm]{Proposition}
\newtheorem{propose}[thm]{Proposition}
\newtheorem{variant}[thm]{Variant}
\newtheorem{conjecture}[thm]{Conjecture}
\theoremstyle{definition}
\newtheorem{notations}[thm]{Notations}
\newtheorem{rem}[thm]{Remark}  
\newtheorem{rmk}[thm]{Remark}
\newtheorem{rmks}[thm]{Remarks}
\newtheorem{defn}[thm]{Definition}
\newtheorem{ex}[thm]{Example}
\newtheorem{claim}[thm]{Claim}
\newtheorem{ass}[thm]{Assumption} 
\numberwithin{equation}{section}
\newcounter{elno}                \newcommand{\mc}{\mathcal}
\newcommand{\mb}{\mathbb}
\newcommand{\surj}{\twoheadrightarrow}
\newcommand{\inj}{\hookrightarrow} \newcommand{\zar}{{\rm zar}}
\newcommand{\an}{{\rm an}} \newcommand{\red}{{\rm red}}
\newcommand{\Rank}{{\rm rk}} \newcommand{\codim}{{\rm codim}}
\newcommand{\rank}{{\rm rank}} \newcommand{\Ker}{{\rm Ker \ }}
\newcommand{\Pic}{{\rm Pic}} \newcommand{\Div}{{\rm Div}}
\newcommand{\Hom}{{\rm Hom}} \newcommand{\im}{{\rm im}}
\newcommand{\Spec}{{\rm Spec \,}} \newcommand{\Sing}{{\rm Sing}}
\newcommand{\sing}{{\rm sing}} \newcommand{\reg}{{\rm reg}}
\newcommand{\Char}{{\rm char}} \newcommand{\Tr}{{\rm Tr}}
\newcommand{\Gal}{{\rm Gal}} \newcommand{\Min}{{\rm Min \ }}
\newcommand{\Max}{{\rm Max \ }} \newcommand{\Alb}{{\rm Alb}\,}
\newcommand{\GL}{{\rm GL}\,} 
\newcommand{\ie}{{\it i.e.\/},\ } \newcommand{\niso}{\not\cong}
\newcommand{\nin}{\not\in}
\newcommand{\soplus}[1]{\stackrel{#1}{\oplus}}
\newcommand{\by}[1]{\stackrel{#1}{\rightarrow}}

\newcommand{\longby}[1]{\stackrel{#1}{\longrightarrow}}
\newcommand{\vlongby}[1]{\stackrel{#1}{\mbox{\large{$\longrightarrow$}}}}
\newcommand{\ldownarrow}{\mbox{\Large{\Large{$\downarrow$}}}}
\newcommand{\lsearrow}{\mbox{\Large{$\searrow$}}}
\renewcommand{\d}{\stackrel{\mbox{\scriptsize{$\bullet$}}}{}}
\newcommand{\dlog}{{\rm dlog}\,} 
\newcommand{\longto}{\longrightarrow}
\newcommand{\vlongto}{\mbox{{\Large{$\longto$}}}}
\newcommand{\limdir}[1]{{\displaystyle{\mathop{\rm lim}_{\buildrel\longrightarrow\over{#1}}}}\,}
\newcommand{\liminv}[1]{{\displaystyle{\mathop{\rm lim}_{\buildrel\longleftarrow\over{#1}}}}\,}
\newcommand{\norm}[1]{\mbox{$\parallel{#1}\parallel$}}
\newcommand{\into}{\hookrightarrow} \newcommand{\image}{{\rm image}\,}
\newcommand{\Lie}{{\rm Lie}\,} 
\newcommand{\CM}{\rm CM}
\newcommand{\sext}{\mbox{${\mathcal E}xt\,$}} 
\newcommand{\shom}{\mbox{${\mathcal H}om\,$}} 
\newcommand{\coker}{{\rm coker}\,} 
\newcommand{\sm}{{\rm sm}}
\newcommand{\tensor}{\otimes}
\renewcommand{\iff}{\mbox{ $\Longleftrightarrow$ }}
\newcommand{\supp}{{\rm supp}\,}
\newcommand{\ext}[1]{\stackrel{#1}{\wedge}}
\newcommand{\onto}{\mbox{$\,\>>>\hspace{-.5cm}\to\hspace{.15cm}$}}
\newcommand{\propsubset} {\mbox{$\textstyle{
\subseteq_{\kern-5pt\raise-1pt\hbox{\mbox{\tiny{$/$}}}}}$}}
\newcommand{\sA}{{\mathcal A}}
\newcommand{\sB}{{\mathcal B}} \newcommand{\sC}{{\mathcal C}}
\newcommand{\sD}{{\mathcal D}} \newcommand{\sE}{{\mathcal E}}
\newcommand{\sF}{{\mathcal F}} \newcommand{\sG}{{\mathcal G}}
\newcommand{\sH}{{\mathcal H}} \newcommand{\sI}{{\mathcal I}}
\newcommand{\sJ}{{\mathcal J}} \newcommand{\sK}{{\mathcal K}}
\newcommand{\sL}{{\mathcal L}} \newcommand{\sM}{{\mathcal M}}

\newcommand{\sN}{{\mathcal N}} \newcommand{\sO}{{\mathcal O}}
\newcommand{\sP}{{\mathcal P}} \newcommand{\sQ}{{\mathcal Q}}
\newcommand{\sR}{{\mathcal R}} \newcommand{\sS}{{\mathcal S}}
\newcommand{\sT}{{\mathcal T}} \newcommand{\sU}{{\mathcal U}}
\newcommand{\sV}{{\mathcal V}} \newcommand{\sW}{{\mathcal W}}
\newcommand{\sX}{{\mathcal X}} \newcommand{\sY}{{\mathcal Y}}
\newcommand{\sZ}{{\mathcal Z}} \newcommand{\ccL}{\sL}
 \newcommand{\A}{{\mathbb A}} \newcommand{\B}{{\mathbb
B}} \newcommand{\C}{{\mathbb C}} \newcommand{\D}{{\mathbb D}}
\newcommand{\E}{{\mathbb E}} \newcommand{\F}{{\mathbb F}}
\newcommand{\G}{{\mathbb G}} \newcommand{\HH}{{\mathbb H}}
\newcommand{\I}{{\mathbb I}} \newcommand{\J}{{\mathbb J}}
\newcommand{\M}{{\mathbb M}} \newcommand{\N}{{\mathbb N}}
\renewcommand{\P}{{\mathbb P}} \newcommand{\Q}{{\mathbb Q}}

\newcommand{\R}{{\mathbb R}} \newcommand{\T}{{\mathbb T}}
\newcommand{\Z}{{\mathbb Z}}
\title[Hilbert-Kunz density function and  asymptotic Hilbert-Kunz multiplicity]
{Hilbert-Kunz density function and  asymptotic Hilbert-Kunz multiplicity 
for projective toric varieties}
\author{Mandira Mondal, V. Trivedi}
\address{School of Mathematics, Tata Institute of Fundamental Research,
Homi Bhabha Road, Mumbai-400005, India}
\email{mandira@math.tifr.res.in; vija@math.tifr.res.in}
\date{}

\begin{abstract} For  a toric pair $(X, D)$, where $X$ is a projective toric variety 
of dimension $d-1\geq 1$ and $D$ is a very ample $T$-Cartier divisor, 
we show that the Hilbert-Kunz density function
$HKd(X, D)(\lambda)$ is the $d-1$ dimensional volume of 
${\overline \sP}_D \cap \{z= \lambda\}$, where 
${\overline \sP}_D\subset \R^d$ is a compact $d$-dimensional set (which is a
finite union of convex polytopes). 

We also show that, for $k\geq 1$, the function
 $HKd(X, kD)$ can be replaced by another compactly 
supported continuous function $\varphi_{kD}$ which 
is `linear in $k$'. 
This gives 
the formula for the associated  coordinate ring $(R, {\bf m})$: 
$$\lim_{k\to \infty}\frac{e_{HK}(R, {\bf m}^k) - e_0(R, {\bf m}^k)/d!}{k^{d-1}}
= \frac{e_0(R, {\bf m})}{(d-1)!}\int_0^\infty\varphi_D(\lambda)d\lambda, $$
where $\varphi_D$ (see Proposition~\ref{p5}) is solely
detemined by the shape of the polytope $P_D$, associated to the toric pair 
$(X, D)$. 
Moreover $\varphi_D$ is a multiplicative 
function for Segre products.

This yields explicit computation of  $\varphi_D$ (and hence the limit), 
for smooth Fano toric surfaces
with respect to anticanonical divisor.
In general, due to this formulation in terms of the polytope $P_D$, one 
can  explicitly 
compute the limit for two dimensional toric pairs and their 
Segre products.

We further show  that (Theorem~\ref{t4}) the renormailzed limit takes the 
minimum value if and only   if
the polytope $P_D$ {\it tiles the space} $M_{\R} = \R^{d-1}$ 
(with the lattice $M = \Z^{d-1}$). As a consequence, one gets an algebraic 
formulation of  
the tiling property of any rational convex polytope.

\end{abstract}
\subjclass[2010]{13D40, 13H15, 14M25, 52B20, 52C22}
\keywords{Hilbert-Kunz density, Hilbert-Kunz multiplicity, Projective 
toric varieties, Convex geometry, Tiling}
\maketitle 

\section{Introduction}
Let $R$ be a  Noetherian ring of prime characteristic
$p >0$ and of dimension $d$ and let
$I\subseteq R$ be an ideal of finite colength. Then we recall that
the Hilbert-Kunz
multiplicity of $R$ with
respect to $I$ is defined as
$$e_{HK}(R, I) = \lim_{n\to \infty}\frac{\ell(R/I^{[q]})}{q^{d}},$$
where
$q=p^n$, $ I^{[q]} = n$-th Frobenius power of $I$
= the ideal generated by $q$-th powers of elements of $I$. This
 is an ideal of finite colength and  $\ell(R/I^{[q]})$ denotes the
length of
the
$R$-module $R/I^{[q]}$. Existence of the limit was proved by
Monsky [Mo1]. This invariant has been extensively
studied, over the years (see the survey article [Hu]). As various 
standard techniques, used for studying multiplicities, are not applicable 
for the 
invariant $e_{HK}$, it has been difficult to compute (there is no general formula even for 
a hypersurface).

 In order to study $e_{HK}$,  
when  $R$ is a standard graded ring ($\dim~R\geq 2)$ 
and $I$ is a homogeneous ideal of finite colength, 
the second author (in [T2]) has defined the notion of Hilbert-Kunz Density function
and its relation with the HK-multiplicity (stated in this paper as Theorem~\ref{hkd}): 
{\it the HK density function is a compactly supported continuous function 
$HKd({R,I}):[0,\infty) \longto 
[0, \infty)$ such that 
$$e_{HK}(R, I)=\int_0^\infty HKd({R, I})(x) \ dx.$$}

Further using this relation, the asymptotic behaviour of 
$e_{HK}(R, I^k)$ as $k\to \infty$, was studied in [T3].

The asymptotic behaviour of $e_{HK}$ was first studied by
Watanabe-Yoshida in [WY1],
 for a Noetherian local ring $(R, {\bf m})$ of dimension $d\geq 2$ and  an
${\bf m}$-primary ideal $I$. In particular, in [WY1] it is shown
that
 $$\frac{e_0(R, I^k)}{d!} \leq  e_{HK}(R, I^k) \leq
\frac{{{k+d-1}\choose{d}}}{k^d}e_0(R, I^k),$$
and as a corollary they get
$$e_{HK}(R, I^k) = \frac{e_0(R, I)}{d!}k^d + o(k^d).$$
Later Hanes in [Ha] (Theorem~3.2) improved this as follows:
$$\ell(R/I^{[q]k}) =
\left[\frac{e_0(R, I)}{d!}k^d + O(k^{d-1})\right]q^d.$$

In other words 
$${e_{HK}(R, I^k) -e_0(R,I^k)/d!} =
O(k^{d-1}).$$

In [T2] (Theorem~3.6), the second author proved the following result:

\vspace{5pt} 
 
\noindent{\bf Theorem}\quad {\it Let $R$ be a standard graded ring of dimension~$d\geq 2$
over a perfect
field $K$ of characteristic $p >0$, and
let $I\subset R$ be a homogeneous ideal of finite colength, which has a set of
generators of the same degree.
Let $M$ be a finitely generated graded $R$-module. Then
$$\lim_{k\to \infty} \frac{e_{HK}(M, I^k) -
e_0(M,I^k)/d!}{k^{d-1}} = \frac{e_0(M,I)}{2(d-2)!} - \frac{E_1(M, I)}{(d-1)!},$$
where 
$$E_1(M,I) := \lim_{q\to \infty}e_1(M, I^{[q]})/q^d~~~\mbox{exists}. $$}

\vspace{5pt}
In particular, it implies
$$\ell(M/I^{[q]k}M) = \left[\frac{e_0(M, I)}{d!}k^d +
\left(\frac{e_0(M,I)}{2(d-2)!} -
\frac{E_1(M, I)}{(d-1)!}\right)k^{d-1} + o(k^{d-1})\right]q^d + O((kq)^{d-1}).$$

The above limit can be computed in the case of a nodal plane curve (due to [Mo2]), 
and in the case of elliptic curves
and  full flag varieties  (due to [NT]).
Other known cases are Hirzebruch surfaces ([T1]).

In this paper we study the same question for a projective toric variety 
$X$ of dimension $d-1\geq 1$ over an algebraically closed field $K$ of 
characteristic $p>0$, with a very ample $T$-Cartier divisor $D$. 
Here,  by $HKd(X, D)$ (or  $e_{HK}(X, D)$) for a pair $(X, D)$ 
we mean 
the HK density function (or HK multiplicity, respectively) of the associated 
homogeneous  coordinate ring with respect to its graded maximal ideal.

It is well known that such a pair 
$(X, D)$ corresponds to a lattice polytope (that is, the convex hull of 
a finite set of lattice points)
$P_D \subseteq M_{\R} \simeq \R^{d-1}$ 
(see (\ref{*}) for the definition).

For a pair $(K[H], I)$, where  $K[H]$ is a toric ring (= normal semigroup ring) and 
$I$ is a monomial ideal $I$ (such that $\ell(K[H]/I) <\infty$), 
K.~Watanabe (Theorem~2.1 of [W]) has proved that the  
$e_{HK}(K[H],I)$ is a rational number.

Later K.~Eto (in [E]) proved the following result:

\vspace{5pt}

\noindent{\bf Theorem}\quad (Theorem~2.2, [E])\ : 
{\it Let $S$ be an affine semigroup and $a_1, \ldots , a_v \in S(\subset \mathbb{Z}^N )$ elements 
such that $K[S]/J$ has finite length, where $J = (x^{a_1} ,\ldots, x^{a_v})$. Let 
$C$ denote the convex rational polyhedral cone spanned by $S$ in $\mathbb{R}^N$ and 
$\mathcal{P}=\{p\in C\ |\ p\notin a_j +C\ \text{for each}\ j\}$. Then
$$e_{HK}(K[S], J) = Vol(\overline{\mathcal{P}}) ,$$
where $\overline{\mathcal{P}}$ is the closure of $\mathcal{P}$ and $Vol$ denotes the 
relative volume ([St2], p. 569)}.

\vspace{5pt}

For a toric pair $(X, D)$ as above (see Section~2 for the detailed theory), 
if $C_D$ is  the convex rational polyhedral 
cone spanned by $P_D\times 1$ in 
$M_{\mathbb{R}}\times\mathbb{R}$ and if
$${\sP_D}=\{p\in C_D\ |\ p\notin 
(u_j,1) +C_D\ \mbox{for every}~u_j\in P_D\cap \Z^{d-1}\}$$  
then, by the above theorem of K. Eto, we have   
$$e_{HK}(X, D) = Vol(\overline{\sP}_D)~~~\mbox{and}~~~e_{HK}(X, kD) = 
Vol(\overline{\sP}_{kD}).$$

As in [T3], we will study the asymptotic behaviour of $e_{HK}(X, kD)$ (as $k$ varies),
  via 
HK-density functions. However we do not use the results of [T3]: instead we 
directly interpret the
HK density function (as in [T2]) for a toric pair $(X, D)$, in terms of $\sP_{D}$:
 
\begin{thm}\label{t2}
Let $C_D$ denote the convex rational polyhedral cone spanned by $P_D\times 1$ in 
$M_{\mathbb{R}}\times\mathbb{R}$.  Let 
$$\mathcal{P_D}=\{p\in C_D\ |\ p\notin 
(u_j,1) +C_D\ \mbox{for every}~u_j\in P_D\cap \Z^{d-1}\}.$$  Then the Hilbert-Kunz density 
function $HKd(X, D)$ is given by the sectional volume of 
$\overline{\mathcal{P}}_D$ ($\overline{\mathcal{P}}_D$ is the closure of $\sP_D$), 
{\it i.e.}
precisely,  
$$HKd(X, D)(\lambda) = Vol_{d-1}(\overline{\mathcal{P}}_D\cap \{z=\lambda\}),$$
for $\lambda\geq 0 $ (note that the relative volume and the volume are same here). 
\end{thm}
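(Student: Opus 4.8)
The plan is to compute both sides by reducing everything to the same Frobenius length computation for the toric ring $R = K[C_D \cap (M\times\Z)]$ (the homogeneous coordinate ring of $(X,D)$), and to recognize the relevant lengths as lattice-point counts that converge, after rescaling, to volumes of slices of $\overline{\sP}_D$.

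First I would recall the definition of the HK density function: $HKd(X,D)(\lambda) = \lim_{q\to\infty} \frac{1}{q^{d-1}} \ell\big(R/(\mathbf{m}^{[q]}R + R_{\geq \lceil \lambda q\rceil})\big)$, or equivalently the pointwise limit of $\frac{1}{q^{d-1}}\dim_K (R/\mathbf{m}^{[q]})_{\lceil\lambda q\rceil}$, where $\mathbf{m}$ is the graded maximal ideal generated by the degree-one monomials $x^{(u_j,1)}$, $u_j \in P_D\cap\Z^{d-1}$. The key observation is that $\mathbf{m}$ is a monomial ideal, so $R/\mathbf{m}^{[q]}$ has a $K$-basis indexed by the lattice points $p \in C_D\cap(M\times\Z)$ that do \emph{not} lie in $q(u_j,1) + C_D$ for any generator $u_j$ — equivalently, after dividing by $q$, by the points $\frac{1}{q}p$ lying in the dilated region $\{x\in C_D : x\notin (u_j,1)+C_D \ \forall j\}$ scaled... more precisely, $p \in \mathbf{m}^{[q]}$ iff $p \in q(u_j,1)+C_D$ for some $j$, and since $C_D$ is a cone, $p\notin \mathbf{m}^{[q]}$ iff $\frac{1}{q}p \in q\cdot\{x : x\notin (u_j,1)+C_D\}$... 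I would set this up carefully: writing $p = (v,m)$ with $m = $ degree, membership $p \in q(u_j,1)+C_D$ depends only on $p$, and the complement region is exactly $\sP_D$ scaled by $q$ in the following sense — $p\notin\mathbf{m}^{[q]}$ for all $j$ iff $p/q \in \sP_D$ is false; rather $p \notin q(u_j,1)+C_D$ iff $p/q \notin (u_j,1) + C_D$ (using that $C_D$ is closed under positive scaling and $q(u_j,1) + C_D = q((u_j,1) + C_D)$ only after also scaling... here one must be slightly careful, but the upshot is that the non-members of $\mathbf{m}^{[q]}$ in degree $j$ correspond to lattice points of $q\,\overline{\sP}_D \cap \{z = j\}$, up to boundary terms).

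The core estimate is then: for $j = \lceil \lambda q\rceil$,
$$
\dim_K (R/\mathbf{m}^{[q]})_j = \#\big(q\,\overline{\sP}_D \cap \{z=j\} \cap (M\times\Z)\big) + (\text{boundary error}),
$$
and the number of lattice points in a dilated rational polytope (or finite union thereof, intersected with an affine hyperplane at height $\lambda q$) is $q^{d-1} Vol_{d-1}(\overline{\sP}_D\cap\{z=\lambda\}) + O(q^{d-2})$ by a standard Ehrhart-type argument applied slice-by-slice — here one uses that $\overline{\sP}_D$ is a finite union of rational convex polytopes (this is where Eto's description and the normality/finiteness set-up from Section~2 enter), so its horizontal slices are finite unions of rational polytopes varying piecewise-linearly in $\lambda$, and the relative volume equals the Euclidean $(d-1)$-volume since the slicing hyperplane $\{z=\lambda\}$ is a lattice translate direction. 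Dividing by $q^{d-1}$ and taking $q\to\infty$ gives $HKd(X,D)(\lambda) = Vol_{d-1}(\overline{\sP}_D\cap\{z=\lambda\})$, and continuity in $\lambda$ (required to call it \emph{the} density function, matching Theorem~\ref{hkd}) follows from the piecewise-polynomial, continuous dependence of slice volumes of a fixed compact polytopal set on the height parameter.

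The main obstacle I anticipate is \textbf{the boundary/error-term bookkeeping in the lattice-point count}, carried out uniformly in $\lambda$: one needs that $\frac{1}{q^{d-1}}\dim_K(R/\mathbf{m}^{[q]})_{\lceil\lambda q\rceil} \to Vol_{d-1}(\overline{\sP}_D\cap\{z=\lambda\})$ not just pointwise but in a way compatible with the definition of $HKd$ (which involves $\ell(R/(\mathbf{m}^{[q]}+R_{\geq\lceil\lambda q\rceil}))/q^{d-1}$, a cumulative sum over degrees), and that the limit function is continuous and compactly supported. Controlling the $O(q^{d-2})$ errors requires knowing that $\sP_D$ is genuinely a finite union of convex rational polytopes with controlled facets — precisely the structural input the paper sets up in Section~2 via the cone $C_D$ and the finitely many "forbidden" translates $(u_j,1)+C_D$. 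Once that polytopal structure is in hand, the argument is the standard one relating Frobenius colengths of monomial ideals in toric rings to volumes, and the passage from the sectional count to $HKd$ via Theorem~\ref{hkd} is routine. A secondary point to handle with care is the identification $q(u_j,1)+C_D$ versus the scaled complement region, i.e.\ checking that $p \notin \mathbf{m}^{[q]}$ is \emph{exactly} equivalent to $p \in q\,\overline{\sP}_D$ (on the level of lattice points), which is where the homogeneity of the cone and the fact that $D$ is very ample (so the $(u_j,1)$ generate the right semigroup in degree one) are used.
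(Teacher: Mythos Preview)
Your proposal is essentially correct and follows the same route as the paper: identify $(R/{\bf m}^{[q]})_m$ with lattice points of $q\sP_D$ at height $m$, decompose $\overline{\sP}_D$ into finitely many rational polytopes, apply an Ehrhart count, and pass to the limit using continuity. Two places where the paper is more careful than your sketch: (i) the homogeneous coordinate ring is $K[S]$ with $S$ the semigroup generated by the $(u_j,1)$, which need not equal $K[C_D\cap(M\times\Z)]$; the paper passes to the normalization $K[S']$ and uses that $K[S]_n=K[S']_n$ for $n\gg 0$ before invoking $C_D\cap\Z^d=S'$; (ii) to make the Ehrhart argument rigorous the paper first fixes $\lambda=m_0/p^{n_0}$ so that $\lfloor q\lambda\rfloor=q\lambda$ for $q\gg 0$ and the slice becomes a fixed rational polytope dilated by $q/q_0$, proves the equality on this dense set, and then invokes continuity of both $HKd(X,D)$ (from Theorem~\ref{hkd}) and of the sectional-volume function (from Theorem~\ref{thvol}, which needs Lemma~\ref{l1}(2) to rule out facets parallel to $\{z=\lambda\}$) to extend to all $\lambda$.
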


We prove the following key proposition:

\begin{propose}\label{p5}
Let $(X, D)$ be a toric pair. Then, for $\lambda \geq 0$,
$$HKd(X, kD)(\lambda +1)= 
\frac{e_0(X, D)k^{d-1}}{(d-1)!}\varphi_{kD}(\lambda)+O(k^{d-2}),$$ 
where $\varphi_{kD}:[0, \infty) \longto [0, 1]$  is the 
compactly supported continuous function
given by 
$$\varphi_{kD}(\lambda) =  \mbox{Vol}_{d-1} ([
W_v\times\{z=\lambda\}]\setminus \bigcup_{u\in\Z^{d-1}} \left[(u,1)+
C_{kD}\right]),$$
for any vertex $v\in \Z^{d-1}$, where $W_v\subset \R^{d-1}$ (as in 
Notations~\ref{n1})  is 
the $d-1$ dimensional unit cell at the vertex $v$.
\end{propose}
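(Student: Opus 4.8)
The plan is to start from Theorem~\ref{t2}, which identifies $HKd(X, kD)(\lambda)$ with the sectional volume $Vol_{d-1}(\overline{\sP}_{kD}\cap\{z=\lambda\})$. So the task reduces to understanding the slice of $\sP_{kD}$ at height $z=\lambda+1$ and showing it is, up to lower order in $k$, a sum over vertices of the polytope $P_{kD}$ of $(d-1)$-dimensional volumes of the local pieces $\varphi_{kD}(\lambda)$ attached to each vertex. First I would record the elementary scaling fact that $P_{kD} = kP_D$ and $C_{kD} = C_D$ (dilating $D$ dilates the polytope but leaves the cone over $P_D\times\{1\}$ unchanged); the change is only in the lattice points $(u_j,1)$ with $u_j\in kP_D\cap\Z^{d-1}$ that one translates the cone by. Thus $\sP_{kD} = C_D\setminus\bigcup_{u\in kP_D\cap\Z^{d-1}}\bigl((u,1)+C_D\bigr)$, and we want the $z=\lambda+1$ slice of its closure.

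Next I would use the local structure of the cone $C_D$ near the ``bottom'' copy $P_D\times\{1\}$. Near a vertex $v$ of $P_D$ the cone $C_D$ looks (after the obvious identification) like $v\times\{1\}$ plus the cone generated by the edges emanating from $v$; the translates $(u,1)+C_D$ for lattice points $u$ near $v$ correspondingly tile a neighborhood. The key geometric claim is that a point $p=(x,\lambda+1)\in\overline{\sP}_{kD}$ forces $x$ to lie within bounded distance (independent of $k$) of one of the vertices $kv$ of $kP_D$: indeed $p\in C_D$ means $x/( \lambda+1)\in P_D$, i.e.\ $x\in(\lambda+1)P_D$, while $p\notin(u,1)+C_D$ for all nearby integral $u$ forces $x$ to be ``uncovered,'' which near the interior or near a facet (but away from the lower-dimensional faces) is impossible because the integer translates of $C_D$ cover everything there up to a set of bounded width. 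This is the heart of the argument and where I expect the main obstacle: making precise, uniformly in $k$, that the uncovered region $\overline{\sP}_{kD}\cap\{z=\lambda+1\}$ is concentrated in $O(k^{d-2})$-volume neighborhoods of the $O(1)$-many vertex cones, each vertex contributing a region that, translated back to the origin, is exactly $[W_v\times\{z=\lambda\}]\setminus\bigcup_{u\in\Z^{d-1}}[(u,1)+C_{kD}]$ up to a negligible correction. One must check that contributions from edges/faces of dimension between $1$ and $d-2$, and the interaction between the $O(k^{d-2})$ lattice points $u_j$ lying on facets of $kP_D$, only affect lower-order terms.

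Then I would assemble the pieces: the number of vertices of $P_{kD}$ equals the number of vertices of $P_D$ (a constant, say $N$), and by the translation/periodicity of the configuration $\bigcup_{u\in\Z^{d-1}}((u,1)+C_{kD})$ near each vertex, each vertex $v$ contributes the same local volume $\varphi_{kD,v}(\lambda):=Vol_{d-1}([W_v\times\{z=\lambda\}]\setminus\bigcup_{u\in\Z^{d-1}}((u,1)+C_{kD}))$. One must verify that this local volume is independent of the choice of vertex among those with the same cone type (and in the definition in the statement it is asserted to be independent of $v$, so I would prove that the local picture at any vertex cone, when intersected with a fundamental unit cell $W_v$, gives the same volume — this uses that $C_{kD}=C_D$ and that translating by $\Z^{d-1}$ in the hyperplane $\{z=1\}$ tiles things compatibly). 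Finally, collecting: $HKd(X,kD)(\lambda+1) = \sum_{v}\varphi_{kD,v}(\lambda)\cdot(\text{volume of }W_v) + O(k^{d-2})$, and since $\sum_v Vol(W_v)\cdot(\text{normalization}) $ matches $e_0(X,D)k^{d-1}/(d-1)!$ — here I would invoke that $e_0(X,D) = (d-1)!\,Vol_{d-1}(P_D)$ and that the vertex cells tile $P_D$ in the leading order — we get the claimed formula $HKd(X,kD)(\lambda+1) = \frac{e_0(X,D)k^{d-1}}{(d-1)!}\varphi_{kD}(\lambda) + O(k^{d-2})$. Continuity and compact support of $\varphi_{kD}$ follow from the corresponding properties of $HKd(X,kD)$ in Theorem~\ref{hkd} together with the explicit description as a volume of a semialgebraic (indeed piecewise-polytopal) region depending continuously on $\lambda$, and the range $[0,1]$ is clear since $W_v$ has unit volume.
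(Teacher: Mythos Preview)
Your proposal rests on two related errors that make the approach unworkable.

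First, the claim $C_{kD}=C_D$ is false: $C_{kD}$ is the cone over $kP_D\times\{1\}$, so its slice at height $z$ is $zkP_D$, not $zP_D$. Thus each translate $(u,1)+C_{kD}$ at height $z=\lambda+1$ is a copy of $u+\lambda kP_D$, whose size grows with $k$.

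Second, and more fundamentally, your geometric picture is inverted. You assert that points of $\overline{\sP}_{kD}\cap\{z=\lambda+1\}$ must lie within bounded distance of one of the (constantly many) vertices $kv$ of $kP_D$, so that the leading term comes from $N=\#(\text{vertices})$ local contributions. This is false: for small $\lambda>0$ the uncovered region is spread uniformly throughout $P_{kD}$, meeting \emph{every} unit cell $W_v\subset P_{kD}$ in positive volume $\varphi_{kD}(\lambda)$. (Already for $d-1=1$, $P_D=[0,n]$: at height $1+\lambda$ with $\lambda<1/(kn)$ the uncovered set is $\bigcup_{u=0}^{kn-1}(u+kn\lambda,\,u+1)$, one gap in every unit interval, arbitrarily far from the endpoints.) This uniformity is exactly what Lemmas~\ref{l2} and~\ref{l4} and Remark~\ref{r3} establish: the uncovered volume in $W_v\times\{z=\lambda+1\}$ is the same $\varphi_{kD}(\lambda)$ for every $v\in\Z^{d-1}$, and for $v$ well inside $P_{kD}$ (i.e.\ $v\in P'_{kD}$) one has $\varphi^v_{kD}=\varphi_{kD}$. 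Your final assembly therefore cannot produce the factor $k^{d-1}$ from a constant number $N$ of vertex pieces.

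The paper's argument is the opposite decomposition. By Lemma~\ref{l3} the slice $\sP_{kD}\cap\{z=\lambda+1\}$ sits inside $P_{(k+l)D}\times\{z=\lambda+1\}$; one covers it by the unit cells $W_v$ indexed by the $\#(P_{kD}\cap\Z^{d-1})=h^0(X,\sO_X(kD))$ lattice points in $P_{kD}$, together with a boundary set $S_1$. Each interior cell contributes exactly $\varphi_{kD}(\lambda)$; the boundary cells (those with $v\in P''_{kD}$ or $v\in S_1$, each of cardinality $O(k^{d-2})$ by Lemma~\ref{sl1}) and the annulus $P_{(k+l)D}\setminus P_{kD}$ together give the $O(k^{d-2})$ error. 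Since $h^0(X,\sO_X(kD))=\frac{e_0(X,D)}{(d-1)!}k^{d-1}+O(k^{d-2})$, the proposition follows.
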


  Note that $(u,1)+C_{kD}$ is the translate of the cone 
$C_{kD}\subseteq \R^d$ from its vertex  $\underline{0}$ to the vertex at $(u,1)$.
In fact (see Remark~\ref{***})
\begin{equation}\label{*2}\varphi_{kD}(\lambda) = \varphi_{D}(k\lambda),
~~~\mbox{for all}~~~~ 
\lambda \geq 0.\end{equation} 
Hence 
\begin{equation}\label{**}
\int_0^{\infty}\varphi_{kD}(\lambda)d\lambda = \frac{1}{k}\int_0^{\infty}
\varphi_{D}(\lambda)d\lambda.\end{equation}
In otherwords, we have `replaced'  the continuous function  $HKd(X, kD)$ by another  
continuous function 
$\varphi(kD)$ which is `linear in $k$', in the sense of (\ref{*2}) and (\ref{**}).

Now the equality given in the above Proposition 
combined with Theorem~1.1 of [T2] gives the main result of this paper:

\begin{thm}\label{t1}For  a projective toric variety $X$ with a very ample $T$-Cartier 
divisor $D$, we have 
$$\lim_{k\to \infty} \frac{e_{HK}(R, {\bf m}^k) - e_0(R,{\bf m}^k)/d!}{k^{d-1}} = 
\frac{e_0(R,{\bf m})}{(d-1)!}\int_0^{\infty}\varphi_{D}(\lambda)d\lambda, $$
where $\varphi_{D}:[0, \infty)\longto [0, \infty)$, defined 
as before (for $k=1$), is a compactly 
supported continuous function and is solely determined by the shape 
of the polytope $P_D$ (as in (\ref{*})) associated to the toric pair $(X, D)$.
\end{thm}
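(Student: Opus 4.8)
The plan is to deduce Theorem~\ref{t1} by feeding Proposition~\ref{p5} into the general formula of [T2] (the second displayed ``Theorem'' in the introduction), specialized to $R$ the homogeneous coordinate ring of $(X,D)$ and $I = \mathbf m$ the graded maximal ideal. The starting point is the identity $e_{HK}(R,\mathbf m^k) = \int_0^\infty HKd(X, kD)(x)\,dx$ from Theorem~\ref{hkd}, valid since $\mathbf m^k$ is generated in a single degree $k$. So I would first write
$$
e_{HK}(R,\mathbf m^k) - \frac{e_0(R,\mathbf m^k)}{d!} = \int_0^\infty HKd(X,kD)(x)\,dx - \frac{e_0(R,\mathbf m^k)}{d!},
$$
and the game is to extract the coefficient of $k^{d-1}$ as $k\to\infty$. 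The key geometric input is that $HKd(X, kD)$ is the sectional-volume function of $\overline{\sP}_{kD}$ (Theorem~\ref{t2}), and that $\overline{\sP}_{kD}$ is contained in the slab $0 \le z \le 1$ — indeed by construction $\sP_{kD}$ consists of points of $C_{kD}$ not in any $(u_j,1)+C_{kD}$, and once $z \ge 1$ every point of $C_{kD}$ lies in some such translate (since $kD$ is very ample, the lattice points $u_j$ of $P_{kD}$ generate the relevant semigroup in degree $\ge 1$). Hence $HKd(X,kD)$ is supported on $[0,1]$, and after the change of variables $x = \lambda + 1$... wait, rather: I would split the integral according to whether the section height is $\le 1$. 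More precisely, $HKd(X,kD)(x)$ for $x\in[0,1]$ accounts for the ``bulk'' $e_0(R,\mathbf m^k)/d!$ part, and the correction of order $k^{d-1}$ comes from the deficit near the top slab, which is exactly what $\varphi_{kD}$ in Proposition~\ref{p5} measures. So I would write $HKd(X,kD)(x) = $ (something whose integral gives $e_0(R,\mathbf m^k)/d! + O(k^{d-2}) \cdot(\text{stuff})$) $-$ (the piece governed by $\varphi_{kD}$), using the decomposition of the cone near the unit cells $W_v$ at the vertices $v$ of $P_{kD}$.

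Concretely: cover the base of $C_{kD}$ (the polytope $P_{kD}$ sitting at height $1$... or rather the cross-sections of $\overline{\sP}_{kD}$) by unit cells and note that, away from the boundary of $P_{kD}$, each unit cell at an interior lattice point $u$ contributes a full unit $(d-1)$-volume to the section $\{z=1\}$ minus its intersection with the union of translates $(u',1)+C_{kD}$; summing over the $\sim \mathrm{Vol}(P_{kD}) = e_0(R,\mathbf m)k^{d-1}/(d-1)!$ interior cells and using $T$-invariance to replace each local contribution by the single vertex-cell computation $\varphi_{kD}(\lambda)$ of Proposition~\ref{p5}, one gets
$$
HKd(X,kD)(\lambda+1) = \frac{e_0(X,D)k^{d-1}}{(d-1)!}\,\varphi_{kD}(\lambda) + O(k^{d-2}),
$$
which is precisely the statement of Proposition~\ref{p5} — so in fact this step is \emph{given} to me and I only need to integrate it. Integrating over $\lambda\in[0,\infty)$ and using $(\ref{**})$, namely $\int_0^\infty \varphi_{kD}(\lambda)\,d\lambda = \tfrac1k \int_0^\infty \varphi_D(\lambda)\,d\lambda$, yields
$$
\int_0^\infty HKd(X,kD)(\lambda+1)\,d\lambda = \frac{e_0(R,\mathbf m)\,k^{d-2}}{(d-1)!}\int_0^\infty \varphi_D(\lambda)\,d\lambda + O(k^{d-3}).
$$
Then I would account for the remaining part $\int_0^1 HKd(X,kD)(x)\,dx$: this equals $e_{HK}(R,\mathbf m^k)$ minus the above, and I must show $\int_0^1 HKd(X,kD)(x)\,dx = e_0(R,\mathbf m^k)/d! + O(k^{d-2}) = e_0(R,\mathbf m)k^d/d! + O(k^{d-1})$ — here I would invoke the [T2] relation $e_0(R,\mathbf m) = (d-1)!\, HKd(X,D)(1)$ together with the fact that the ``full slab'' $\{(p,z): p\in \frac1k P_{kD}\text{-ish}, 0\le z\le 1\}$ has the right volume, or more cleanly, quote the Watanabe–Yoshida/Hanes bound stated in the introduction which already gives $e_{HK}(R,\mathbf m^k) - e_0(R,\mathbf m^k)/d! = O(k^{d-1})$, so only the $k^{d-1}$ coefficient needs pinning down and that coefficient lives entirely in the $\varphi_D$ term after rescaling $k^{d-2}\to k^{d-1}$; I need to be careful that the change of variable $x=\lambda+1$ versus the homogeneity $\varphi_{kD}(\lambda)=\varphi_D(k\lambda)$ interacts correctly so that a factor of $k$ is recovered, turning $k^{d-2}$ into $k^{d-1}$. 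Tracking that single factor of $k$ is where I'd be most careful.

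Finally, for the qualitative claims about $\varphi_D$: continuity and compact support follow from continuity and compact support of $HKd(X,kD)$ (Theorem~\ref{hkd}) via the identity in Proposition~\ref{p5} and the scaling $(\ref{*2})$, and ``solely determined by the shape of $P_D$'' is immediate from the \emph{definition} of $\varphi_D$ in Proposition~\ref{p5}, which only refers to the cone $C_D$ over $P_D$ and the unit cell $W_v$, with no further reference to the ring or the characteristic. \textbf{The main obstacle} I anticipate is the bookkeeping in the error term: showing that replacing the sum over all interior unit cells of $P_{kD}$ by $\mathrm{Vol}(P_{kD})$ copies of the single vertex computation introduces only an $O(k^{d-2})$ error (boundary cells, partially-covered cells), and then confirming that after integration and the $\varphi_{kD}(\lambda) = \varphi_D(k\lambda)$ substitution the surviving term is genuinely of exact order $k^{d-1}$ with the stated coefficient $\frac{e_0(R,\mathbf m)}{(d-1)!}\int_0^\infty \varphi_D(\lambda)\,d\lambda$ and not absorbed into the error. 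Much of this is packaged into Proposition~\ref{p5}, so the remaining proof of Theorem~\ref{t1} should be short: integrate, apply $(\ref{**})$, and compare with the $O(k^{d-1})$ bound already recorded.
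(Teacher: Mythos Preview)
Your overall strategy is the paper's, but there is a concrete gap in how you recover the factor of $k$. You start from $e_{HK}(R,\mathbf m^k) = \int_0^\infty HKd(X, kD)(x)\,dx$, and this is already off by a factor of $k$: by Definition~\ref{d2}, $HKd(X,kD)$ is the density function of the \emph{Veronese} ring $(R^{(k)},\mathbf m_{R^{(k)}})$, not of $(R,\mathbf m^k)$. The paper records in Remark~\ref{rm} that
\[
e_{HK}(R,\mathbf m^k)=k\,e_{HK}(X,kD),\qquad e_0(R,\mathbf m^k)=k\,e_0(X,kD),
\]
and \emph{this} is the missing factor of $k$ you are hunting for. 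It does not come from the substitution $x=\lambda+1$ (Jacobian $1$) nor from the homogeneity $\varphi_{kD}(\lambda)=\varphi_D(k\lambda)$: that last factor of $k$ is already spent in $(\ref{**})$ to pass from $\int\varphi_{kD}$ to $\tfrac1k\int\varphi_D$, which is exactly how your $k^{d-1}$ from $h^0(X,\sO_X(kD))$ became $k^{d-2}$ in the first place.

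Two smaller points. First, the claim that $\overline{\sP}_{kD}\subset\{0\le z\le 1\}$ is false (as you half-realize): Lemma~\ref{l3} shows $\overline{\sP}_{kD}\subset\{0\le z\le 1+l/k\}$, and the slab $\{1\le z\le 1+l/k\}$ is precisely where $\varphi_{kD}$ lives. Second, you do not need Hanes or Watanabe--Yoshida for the ``bulk'' term. For $0\le x\le 1$ every translate $(u_j,1)+C_{kD}$ lies in $\{z\ge 1\}$, so $\sP_{kD}\cap\{z=x\}=C_{kD}\cap\{z=x\}=xP_{kD}\times\{x\}$ and hence
\[
\int_0^1 HKd(X,kD)(x)\,dx=\int_0^1 x^{d-1}\,\mathrm{Vol}_{d-1}(P_{kD})\,dx=\frac{\mathrm{Vol}_{d-1}(P_{kD})}{d}=\frac{e_0(X,kD)}{d!}
\]
\emph{exactly}. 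Thus $e_{HK}(X,kD)-e_0(X,kD)/d!=\int_0^{l/k}HKd(X,kD)(1+\lambda)\,d\lambda$ on the nose, into which Proposition~\ref{p5} is fed; integrating the $O(k^{d-2})$ error over an interval of length $l/k$ gives $O(k^{d-3})$, and the Veronese factor $k$ then produces the $k^{d-1}$ in the numerator of the theorem.
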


 In fact,
 Theorem~\ref{t4} states that 
 among the set of $d-1$ dimensional toric pairs $(X, D)$,
(the renormalized) limit of Theorem~\ref{t1} achieves the  minimum if and only 
 if the polytope $P_D$ tiles the space $M_{\R} = \R^{d-1}$ with lattice $M$.
 In other words 
the asymptotic behaviour of $e_{HK}(R, {\bf m}^k)$ (relative to 
its usual multiplicity $e_0(R, {\bf m})$) as $k\to \infty$, 
characterizes the 
tiling property of the associated polytope $P_D$ (with the canonical lattice 
$\Z^{d-1})$.
Similarly, the  tiling property of any rational convex
polytope can be formulated (Remark~\ref{pt}) in terms of 
this algebraic invariant (the renormalized limit).

It is easy to see that the polytope associated to   
the $d-1$ dimensional  Segre self-product of the toric pair 
$(\P^1, \sO(m_0))$, for any $m_0\geq 1$ tiles the space $\R^{d-1}$.
Hence the renormalized limit for such toric pair achieves the minimum in 
any dimension.  
In particular  this result (Remark~\ref{r4}) is also in the spirit of the well 
known conjecture of Watanabe-Yoshida (Conjecture~4.2, [WY2]).

Moreover, similar to the  HKd functions, the function   
$\varphi_{D}$ turns out to have a multiplicative property on the set of toric pairs:
\begin{propose}\label{t3}Let $(X, D)$ and $(Y,D')$ be two  toric pairs defined 
over the same perfect field $k$.
Then  
$$(1-\varphi_{X\times Y, D\boxtimes D'})=(1-\varphi_{X, D})(1-\varphi_{Y, D'}),$$
where  $(X\times Y, D\boxtimes D')$ is the toric variety given by the Segre product 
of the toric verieties $(X, D)$ and $(Y, D')$ and $D\boxtimes D'$ denotes a 
divisor corresponding to the line bundle $\pi_1^*\mathcal{O}_X(D)\otimes
\pi_2^*\mathcal{O}_Y(D')$, where $\pi_1:X\times Y\to X, \pi_2 :X\times Y\to Y$ are 
the two projection morphisms.
\end{propose}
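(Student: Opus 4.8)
The plan is to reduce the multiplicativity of $\varphi$ under Segre products to the combinatorics of the defining region. Recall from Proposition~\ref{p5} that for a toric pair $(X,D)$ of dimension $d-1$ with polytope $P_D$, and any vertex $v\in\Z^{d-1}$ of the lattice,
$$\varphi_D(k\lambda) = \mbox{Vol}_{d-1}\bigl([W_v\times\{z=\lambda\}]\setminus \bigcup_{u\in\Z^{d-1}}\bigl[(u,1)+C_D\bigr]\bigr).$$
Equivalently, writing $Q_D \subseteq \R^{d-1}$ for the set obtained by intersecting the complement of $\bigcup_{u}((u,1)+C_D)$ with the slice $\{z=\lambda\}$ and then translating into the unit cell $W_v$ (this is independent of $v$ by periodicity), we have $\varphi_D(k\lambda) = \mbox{Vol}(Q_D)$ where $\mbox{Vol}(W_v)=1$. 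The quantity $1-\varphi_D$ is then the (normalized) volume of the \emph{covered} part $W_v \setminus Q_D$, i.e. the fraction of the unit cell that is covered by at least one translated cone $(u,1)+C_D$ at height $z=\lambda$. So the statement $(1-\varphi_{X\times Y})=(1-\varphi_X)(1-\varphi_Y)$ is asserting that the covered fraction for the product is the product of the covered fractions — which is what one expects if the two covering events become "independent" in a product measure.

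First I would make precise the relation between $C_{D\boxtimes D'}$ and $C_D, C_{D'}$. The polytope of the Segre product $(X\times Y, D\boxtimes D')$ is $P_D\times P_{D'} \subseteq \R^{d-1}\times\R^{d'-1}$ (this is standard toric geometry: global sections of $\pi_1^*\sO(D)\otimes\pi_2^*\sO(D')$ are spanned by products of sections, so the lattice points are $(P_D\cap\Z^{d-1})\times(P_{D'}\cap\Z^{d'-1})$, and the cone $C_{D\boxtimes D'}$ spanned by $(P_D\times P_{D'})\times\{1\}$ sits inside $\R^{d-1}\times\R^{d'-1}\times\R$). The key geometric fact I need is: for a point $(x,x')\in\R^{d-1}\times\R^{d'-1}$ at height $z=\lambda$, the point $(x,x',\lambda)$ lies in some translate $(u,u',1)+C_{D\boxtimes D'}$ with $(u,u')\in\Z^{d-1}\times\Z^{d'-1}$ \emph{if and only if} $(x,\lambda)\in (u,1)+C_D$ for some $u$ \emph{and} $(x',\lambda)\in(u',1)+C_{D'}$ for some $u'$. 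One direction is clear from the product structure of the cone; the other direction — that one can choose the two lattice points $u$ and $u'$ simultaneously, matched at the same height — requires an argument, essentially that membership $(x,\lambda)\in(u,1)+C_D$ is equivalent to $x - u \in \lambda\cdot P_D$ (scaling the $z=1$ slice $P_D\times\{1\}$ of the cone to height $\lambda$), and similarly for $D'$, so the product condition factors cleanly. Here is where I expect the main subtlety: handling the closures/boundaries (the "$\overline{\sP}$" versus "$\sP$") and making sure that the measure-zero boundary contributions do not spoil the volume identity, and correctly identifying the slice of the cone $C_D$ at height $z=\lambda$ with $\lambda P_D$.

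Granting the "if and only if" above, the covered region in the product unit cell $W_{(v,v')} = W_v\times W_{v'}$ at height $\lambda$ is, modulo the lattice $\Z^{d-1}\times\Z^{d'-1}$, exactly $(\mbox{covered}_X \times W_{v'}) \cup (W_v \times \mbox{covered}_Y)$, where $\mbox{covered}_X\subseteq W_v$ is the part of the $X$-unit-cell covered by translates of $C_D$ at height $\lambda$ (and similarly for $Y$). The complement — the uncovered part $Q_{D\boxtimes D'}$ — is therefore $Q_D\times Q_{D'}$. Taking $(d-1)+(d'-1)$-dimensional volume and using Fubini on the product of unit cells (each of volume $1$, and the product cell has volume $1$), we get $\mbox{Vol}(Q_{D\boxtimes D'}) = \mbox{Vol}(Q_D)\cdot\mbox{Vol}(Q_{D'})$, i.e. $\varphi_{D\boxtimes D'}(k\lambda) = \varphi_D(k\lambda)\varphi_{D'}(k\lambda)$ — wait, that would give multiplicativity of $\varphi$ itself, not of $1-\varphi$. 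Let me recheck: $1-\varphi_D = \mbox{Vol}(\mbox{covered}_X)$, and the covered region of the product is a \emph{union}, so its volume is $\mbox{Vol}(\mbox{covered}_X) + \mbox{Vol}(\mbox{covered}_Y) - \mbox{Vol}(\mbox{covered}_X)\mbox{Vol}(\mbox{covered}_Y)$ by inclusion–exclusion over the product cell, which rearranges precisely to $1-(1-\varphi_D)(1-\varphi_{D'})$ for the uncovered volume $\varphi_{D\boxtimes D'}$. So the final assembly is inclusion–exclusion plus Fubini; the real work is the combinatorial lemma identifying the covered set of the product as the union described above, and that in turn rests on the factorization of the cone-membership condition via the slices $\lambda P_D$, $\lambda P_{D'}$. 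I would also remark that since the identity $\varphi_{kD}(\lambda)=\varphi_D(k\lambda)$ from \eqref{*2} lets us pass between $\varphi_D$ and $\varphi_{kD}$ freely, it suffices to prove the product formula at one scale, say $k=1$ and all $\lambda\geq 0$.
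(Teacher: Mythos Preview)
Your approach is correct and genuinely different from the paper's. The paper does \emph{not} argue directly from the product structure of the cone; instead it invokes the known multiplicativity of the Hilbert--Kunz density function under Segre products (Proposition~2.14 of [T2]), expands $HKd(X\times Y, k(D\boxtimes D'))(\lambda+1)$ via that formula, separately expands it via Proposition~\ref{p5} applied to the product toric pair, substitutes $\lambda = x/k$, divides by $k^{d+d'-2}$, and lets $k\to\infty$ to extract the identity $\varphi_{X,D}(x)+\varphi_{Y,D'}(x)-\varphi_{X,D}(x)\varphi_{Y,D'}(x)=\varphi_{X\times Y,D\boxtimes D'}(x)$. Your direct combinatorial argument via $P_{D\boxtimes D'}=P_D\times P_{D'}$ is shorter and avoids that machinery.

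That said, your bookkeeping is tangled in exactly the place you flagged. The correct statement is that the \emph{covered} set in the product cell is a \emph{product}, not a union: $(x,x',1+\lambda)\in(u,u',1)+C_{D\boxtimes D'}$ for some $(u,u')\in\Z^{d-1}\times\Z^{d'-1}$ iff $x-u\in\lambda P_D$ for some $u$ \emph{and} $x'-u'\in\lambda P_{D'}$ for some $u'$ (the height of the slice is $1+\lambda$, so the scaling factor is $\lambda$, not $\lambda-1$ as in Definition~\ref{d1}). Hence $\mbox{covered}_{X\times Y}=\mbox{covered}_X\times\mbox{covered}_Y$, and Fubini gives $1-\varphi_{D\boxtimes D'}=(1-\varphi_D)(1-\varphi_{D'})$ in one line, with no inclusion--exclusion needed. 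Your paragraph first asserts the covered region is a union and the uncovered region is $Q_D\times Q_{D'}$ (both false), then ``rechecks'' to again assert the covered region is a union; the formula you land on is right only because two sign swaps cancel. Cleaning this up, your argument is complete: the independence of the lattice witnesses $u,u'$ is exactly what makes the covered set a Cartesian product, and the boundary/closure issues you worry about are harmless since they lie in a set of measure zero.
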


We also compute the function $\varphi_D$ for all 
five smooth Fano toric surfaces with respect to their respective 
anticanonical divisors.
Similarly one can explicitly compute $\varphi_D$, for every 
two-dimensional toric pair $(X, D)$.
Hence, due to the multiplicative property (Proposition~\ref{t3}), 
one can compute 
$\varphi_{X,D}$, where $(X, D)$ is a Segre product of the two dimensional toric pairs.
In particular, one can compute the limit (Theorem~\ref{t1}), in these cases.

\vspace{5pt}

The organization of the paper is as follows.

\vspace{5pt}

In Section~2 we recall some notations about toric varieties  
(following Fulton [Fu]), in a form useful for us.

\vspace{5pt}
In Section~3 we give a self contained  proof of 
the fact that the sectional volume function $\phi(\lambda) = \mbox{Vol}_{d-1}
{\tilde P} \cap \{z= \lambda\}$, where ${\tilde P}$ is a $d$-dimensional convex polytope
in $\R^d$ with no facets lying in hyperplanes parallel to $\{z=0\}$, is a continuous piecewise polynomial function of $\lambda$.

\vspace{5pt}

In Section~4 we give a proof of Theorem~\ref{t2}, relating the HKd function for $(X, D)$
with the sectional volume of $\sP_D$. We prove that $\sP_D$ can be written as a 
finite union of convex polytopes with disjoint interiors, and none of  the facets 
of the involved polytopes lie in the $\{z =\lambda \}$ hyperplane for any $\lambda$.  
Now, owing to the fact that the  
HKd function and the sectional volume function (as given in Section~3) 
are both continuous, we only need to check the equality for a suitable 
dense set, namely the set of rationals  
$\{m/p^n\mid m, n\in \Z_{\geq 0}\} \subseteq \R_{\geq 0}$.

\vspace{5pt}

Section~5 involves purely convex geometry.
In this section  we prove that, for any integer $k\geq 1$,  
$\sP_{kD}\cap \{z=1+\lambda\} = \emptyset $, 
for $\lambda \geq l/k$ (where $l$ is the number of the vertices of the polytope $P_D$).
This implies $\sP_{kD} \subseteq P_{(k+l)D}\times \R_{\geq 0}$, {\it i.e.}, $\sP_{kD}$ 
lies `approximately' in a cylinder over the polytope $P_{kD}$.   
We also prove various properties of the function $\varphi_{kD}$ here.
Lemma~\ref{l4} implies that, in the definition of $\varphi_{kD}$ 
(defined with respect to a fixed unit cell $W_v$ given by  a  vertex $v\in \Z^{d-1}$, as in 
Proposition~\ref{p5}),
we can replace the infinite set $\{u\in \Z^{d-1}\}$ by a finite set of a fixed size, 
{\it i.e.}, by 
$$\{u\in \Z^{d-1}\cap B(v, r)\mid W_v\subseteq B(v, r)\},$$ 
where $r$ is independent of $k$.
This implies  that 
$$ \cup_{v\in S} \left((W_v\times \R_{\geq 1})\cap \sP_{kD}\right)  \subseteq \sP_{kD} 
\cap \{z\in R_{\geq 1}\} \subseteq 
\cup_{v\in S'}\left((W_v\times \R_{\geq 1})\cap \sP_{kD}\right),$$
where the set $S$ and $S'\subseteq \Z^{d-1}$ have sufficiently large overlap 
(note  that 
$W_v\times \R_{\geq 1}$ is the cylinder over the unit cell  $W_v$), and for a 
 `general' $v$  from either set,
$$\mbox{Vol}_{d-1}\left((W_{v}\times \{z= 1+\lambda\})\cap\sP_{kD}\right) = 
\varphi_{kD}(\lambda),~~\mbox{for all}~~\lambda\geq 0,$$ 
where we already know that $\varphi_{kD}$ is `linear' with respect to $k$ (see (\ref{*2}) and 
(\ref{**})).

In Section~6, we use the above results to prove the key Proposition~\ref{p5},
 which replaces $HKd(X, kD)$ by 
$\varphi_{kD}$ upto $O(k^{d-2})$. 
Next in this section we prove the main Theorem \ref{t1},  and 
  Proposition~\ref{t3} gives the multiplicative 
property of the function $\varphi_D$. Theorem~{\ref{t4} and Remark~\ref{pt}  
relate the tiling of $P_D$ with lattice $M$ and 
the asymptotic growth of the HK multiplicity for $(X,D)$.    

Section~7 consists of examples.  We prove Theorem~\ref{t1}, 
for a toric pair $(\P^1, D)$, which takes care of one dimensional toric pairs.
We also compute $\varphi_D$ (and hence the limit (Theorem~\ref{t1}), 
for the smooth Fano toric surfaces with respect to their 
anticanonical divisors. 


\section{preliminaries}
Henceforth we assume that  $K$ is an algebraically closed field of 
char. $p > 0$. We follow the notations from [Fu].
Let  $N$ be  a lattice (which is isomorphic to 
$\Z^n$)
and let  $M = Hom(N, \Z)$ denote the dual lattice 
with a dual pairing $\langle\ , \rangle$.  
Let $T = \text{Spec} (K[M])$ be the torus with character lattice $M$. Let $(X, D)$ denote a complete toric variety over $K$ with fan $\Delta \subset N_\R$ and very ample $T$-divisor $D$ on $X$.

We recall that the $T$-divisors on $X$ (the 
irreducible subvarieties of 
codimension $1$ which  are $T$-stable) correspond to one dimensional cones 
(which are edges/rays of $\Delta $) of $X$. If  $\tau_1,\ldots, \tau_n$ denote the edges of 
 the fan $\Delta$, then these divisors are the orbit closures $D_i= V(\tau_i)$.
 A $T$-divisor $D=\sum_i a_iD_i$ (note that $a_i$ are integers)  determines a  
lattice polytopes in $M_{\mathbb{R}}$ defined by
\begin{equation}\label{*}P_D=\{u\in M_{\mathbb{R}} \ | \ \langle u, v_i\rangle\geq -a_i ~~\text{for all}\ 
i\ \}\end{equation}
and the induced embedding of $X$ in $\P^{r-1}$ is given by 
$$\phi=\phi_D: X\to\P^{r-1},\ 
\ x\mapsto ({\chi}^{u_1}(x):\ldots: {\chi}^{u_r}(x)),$$ 
where $P_D\cap M=\{u_1, u_2,\ldots, u_r\}$. 

Moreover the 
global sections of the line bundle $\mathcal{O}(D)$ are
$$\Gamma(X, \mathcal{O}(D))=\bigoplus_{u\in P_D\cap M}K. \chi^u.$$ 
For  any integer 
$m\geq 1$, we have $P_{mD}=mP_D$ (see Page~67 in [Fu]).

For $(X, D)$ and $P_D$  as above.  Consider $\sigma$ the cone in 
$N\times \mathbb{Z}$ whose dual ${\sigma}^{\vee}$ is the cone over $P_D\times 1$ 
in $M\times \mathbb{Z}$. Then the affine 
variety $U_{\sigma}$ corresponding to the cone $\sigma$ 
is the affine cone of $X$ in  $\mathbb{A}^r_K$.

If $S$ is the semigroup generated by $\{(u_1, 1), \ldots, (u_r,1)\}$ then 
the homogeneous coordinate ring of $X$ (with respect to this 
embedding) is 
$K[S] = K[{\chi}^{(u_1,1)},
\ldots, {\chi}^{(u_r,1)}]$. Note that
there is an isomorphism of graded rings (see Proposition 1.1.9, [CLS])
$$ \frac{K[Y_1,\ldots, Y_r]}{I}\simeq K[{\chi}^{(u_1,1)},
\ldots, {\chi}^{(u_r,1)}] = K[S],$$
 where, the kernel
$I$ is  generated by the binomials  of the form
$$Y_1^{a_1}Y_2^{a_2}\cdots Y_r^{a_r}-Y_1^{b_1}Y_2^{b_2}\cdots Y_r^{b_r}$$
where $a_1,\ldots, a_r, b_1,\ldots, b_r$ are nonnegative integers satisfying 
the equations 
$$a_1u_1+\cdots+a_ru_r=b_1u_1+\cdots+b_ru_r\ \ \text{and}\ \ a_1+\cdots+a_r=
b_1+\cdots+b_r.$$

\begin{defn}\label{dtp}By a toric pair $(X, D)$, we mean $X$ is a projective variety of
dimension $d-1\geq 1$ over a field $K$  with a very ample 
$T$-divisor $D$. Moreover $P_D$ denotes the associated lattice convex polytope as defined by 
 (\ref{*}). 
The homogeneous coordinate ring of $X$ with respect to this embedding is 
\begin{equation}\label{*1} K[S]=K[{\chi}^{(u_1,1)},\ldots, {\chi}^{(u_r,1)}],\end{equation} 
where 
$P_D\cap M=\{u_1,
\ldots, u_r\}$ and  $S$ is the semigroup generated by $\{(u_1,1),\ldots,(u_r,1)\}$.
\end{defn}

Note that due to this isomorphism, we can consider $K[S]$ as a standard graded  
ring, where 
$\deg~\chi^{(u_i, 1)} = 1$.
While dealing with the cone in $M_\mathbb{R}\times\mathbb{R}\simeq 
\mathbb{R}^{d}$, 
we denote the last co-ordinate as $x_d$ or $z$, interchangebly. 

\begin{rmk}\label{St}We recall the following well known fact (see [St2], Excercise~33 
and [St1], Proposition~4.6.30).

If $P$ is a $d$-dimensional rational convex polytope in $\R^m$ and $i(P, n) = 
\#(nP\cap\Z^m)$ then 
$$i(P, n) = c_d(n)n^d+c_{d-1}n^{d-1}+\cdots+c_0(n),$$
where $c_0, \ldots, c_d$ are periodic functions of $n$ and 
$c_d(n) = \mbox{Vol}_d(P)$.
\end{rmk} 

\section{Volume of \enquote{slices} of convex polytope}
Let $P$ be a $d$-dimensional convex polytope in $\mathbb{R}^d$.
For $Q\subseteq \R^d$ we denote $Q\cap \{z=\lambda\} = 
Q\cap \{(\underline{x}, \lambda)\mid \underline{x}\in \R^{d-1} \}
\subseteq \R^d$

Our goal in this section, is to describe the 
behaviour of the function $\phi:(-\infty, \infty) \longto [0, \infty)$ given by 
$\phi(t) = \text{Vol}_{d-1}(P \cap  \{z=t\})$.

\begin{defn}\label{dd1}
Let $\pi:\mathbb{R}^{d}\longrightarrow \mathbb{R}$ be the projection map 
given by projecting to the last co-ordinate $z$. 
Then we denote the set $\pi(\mbox{vertex set of } P) =  
\{\tau_1, \ldots, \tau_m\}$, where $\tau_1 < \tau_2 <
\cdots <\tau_m$.  
\end{defn}

\begin{lemma}
\label{lemoneside}\begin{enumerate}
\item  The support of $\phi(t)$ is a compact connected interval.
\item  Suppose $P$ has no supporting hyperplane parallel to 
the hyperplane $\{z=0\}$. Then $\phi(\tau_0)=\phi(\tau_m)=0$.\end{enumerate}
\end{lemma}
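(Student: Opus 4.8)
The plan is to treat the two parts separately, using only elementary convex geometry. For part (1), the key observation is that $P$ is compact and convex, so $\pi(P) = [\tau_1,\tau_m]$ is a compact interval (the image of a compact convex set under a linear map). For any $t$ with $\tau_1 < t < \tau_m$, the slice $P \cap \{z = t\}$ is a nonempty convex set, and I claim it is $(d-1)$-dimensional, hence has positive $(d-1)$-volume: indeed, pick a vertex $v$ with $\pi(v) = \tau_1$ and a vertex $w$ with $\pi(w) = \tau_m$; since $P$ is $d$-dimensional it contains a small $d$-ball $B$, and the convex hull of $\{v,w\} \cup B$ lies in $P$ and meets $\{z=t\}$ in a set with nonempty $(d-1)$-dimensional interior. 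Thus $\{t : \phi(t) > 0\}$ contains the open interval $(\tau_1,\tau_m)$; since it is clearly contained in $[\tau_1,\tau_m]$, the support of $\phi$ is a compact connected interval (one of $[\tau_1,\tau_m]$, possibly with one or both endpoints removed — in all cases the \emph{closed} support is $[\tau_1,\tau_m]$, which is what is claimed).

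For part (2), suppose $P$ has no supporting hyperplane parallel to $\{z=0\}$. I want to show $\phi(\tau_1) = \phi(\tau_m) = 0$ (I read $\tau_0$ in the statement as $\tau_1$, the minimum value). Consider the face $F = P \cap \{z = \tau_1\}$, which is the face of $P$ on which the linear functional $z$ attains its minimum. If $\phi(\tau_1) = \mathrm{Vol}_{d-1}(F) > 0$, then $F$ is a $(d-1)$-dimensional face of $P$, i.e. a facet, and it lies in the hyperplane $\{z = \tau_1\}$, which is parallel to $\{z=0\}$; moreover $\{z = \tau_1\}$ is then a supporting hyperplane of $P$ (since $z \geq \tau_1$ on all of $P$). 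This contradicts the hypothesis. Hence $\phi(\tau_1) = 0$, and the argument at the top value $\tau_m$ is symmetric (using that $z$ attains its maximum on $P \cap \{z = \tau_m\}$).

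I do not anticipate a genuine obstacle here; the only point requiring a little care is the dimension-counting in part (1) — making sure the slice of a $d$-dimensional polytope by a hyperplane transverse to its "height" direction really is full $(d-1)$-dimensional, which is where compactness of $\pi(P)$ and the existence of an interior $d$-ball get used. An alternative, cleaner route for part (1) is to note that $\phi$ is concave to the power $1/(d-1)$ on its support (Brunn–Minkowski), but the direct argument above is self-contained and suffices. The interchangeable roles of $\tau_1$ and $\tau_m$ make part (2) immediate once the facet/supporting-hyperplane dichotomy is spelled out.
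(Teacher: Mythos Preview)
Your proof is correct. For part~(2), your argument is essentially identical to the paper's: if $\phi(\tau_m) > 0$, then $P \cap \{z = \tau_m\}$ is a $(d-1)$-dimensional face, so $\{z = \tau_m\}$ is a supporting hyperplane of $P$ parallel to $\{z=0\}$, contradicting the hypothesis.

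For part~(1), the paper takes a slightly different route. Rather than directly showing each intermediate slice is $(d-1)$-dimensional, it first proves the contrapositive observation: if $\dim(P \cap \{z = \alpha\}) \leq d-2$, then $\{z = \alpha\}$ cannot meet the interior of $P$ (an interior ball would force a $(d-1)$-dimensional intersection), so $P$ lies entirely in one closed half-space determined by $\{z=\alpha\}$. Connectedness then follows by contradiction: if $\phi(a), \phi(b) > 0$ but $\phi(x) = 0$ for some $a < x < b$, then $P$ would lie on one side of $\{z = x\}$, which is impossible. Your direct construction---the convex hull of a bottom vertex, a top vertex, and an interior $d$-ball---is a bit cleaner and avoids the two-step contradiction, though the underlying geometric fact (an interior ball forces a full-dimensional slice) is the same in both arguments. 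One small point worth a sentence of justification: you assert that this convex hull meets $\{z = t\}$ in a set with nonempty $(d-1)$-interior; this follows by scaling the equatorial $(d-1)$-disk of the ball toward $v$ or toward $w$ as appropriate.
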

\begin{proof}
First we prove that, if  $\{z=\alpha\}$ is a hyperplane in $\mathbb{R}^d$ such that 
dimension $\{z=\alpha\}\cap P \leq d-2$. Then 
it can not pass through the interior of the polytope and 
therefore $P$ lies entirely in one of the closed half spaces defined by $\{z=\alpha\}$.

Suppose by contradiction, 
$x\in \{z=\alpha\}\cap \text{int}(P)$. Let $B^d(x, \epsilon)$ be a small 
ball around $x$ of radius $\epsilon$ inside $P$. Then 
$B^d(x, \epsilon)\cap\{z=\alpha\}\cap P$ 
is a nonempty  $d-1$ dimensional ball. Hence dimension $\{z=\alpha\}\cap P$ 
is $d-1$, which is a contradiction.

Suppose the support of $\phi$ is not connected then we have $a< x<b$ in $\R$ such that
$\phi(a)\neq 0, \phi(b)\neq 0$ and $\phi(x)=0$. But 
then dimension $\{z=x\}\cap P \leq d-2$, therefore $P$ lies in one side of the 
hyperplane $\{z=x\}$, which is a contradiction since both $\phi(a)$ and $\phi(b)$ is 
nonzero. Further, since $P$ is a bounded polytope, support of $\phi$ is a compact 
interval $\subseteq [\tau_0, \tau_m]$.

 Suppose $\phi(\tau_m)\neq 0$ then $\dim~P\cap \{z = \tau_m\} = d-1$. 
Since for any $\epsilon >0$, $P\cap \{z= \tau_{m+\epsilon}\} = \emptyset $, the  hyperplane
$\{z=\tau_m\}$ does not pass through the interior of $P$. Hence $\{z=\tau_m\}$ is 
a supporting hyperplane of $P$ parallel to $\{z= 0\}$, which is a contradiction.
Similar proof shows $\phi(\tau_0)$ is $0$.
\end{proof}

A volume formula $\phi$ for "slices" of a simplex has been derived by  
C.A. Micchelli ([Mi], Chapter 4) in more general context, using 
the univariate B-splines. For details about B-splines and volume of slices, 
see [CS]. Here we give a simpler self contained proof, which is  suited  to 
our case.

\begin{lemma}\label{thPP}Let $S_{ik}\subset\R^d$ be a $d$-simplex such that the set of 
vertices of $S_{ik}$ are contained in $\{z=\tau_i\}\cup \{z=\tau_{i+1}\}$. 
Then the function $\phi_{ik}:[\tau_i,\tau_{i+1}] \longrightarrow \mathbb{R}$, 
given by $\lambda \mapsto \mbox{Vol}(S_{ik}\cap \{z=\lambda\})$ is a 
polynomial function of degree $\leq d-1$ in $\lambda$.
\end{lemma}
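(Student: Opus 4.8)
\textbf{Proof proposal for Lemma~\ref{thPP}.}

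The plan is to set up an explicit affine change of coordinates adapted to the two hyperplanes $\{z=\tau_i\}$ and $\{z=\tau_{i+1}\}$, compute the cross-section of the simplex $S_{ik}$ at level $z=\lambda$ as the convex hull of a moving set of points, and then read off the $(d-1)$-dimensional volume of that cross-section directly. After the normalization $t = (\lambda - \tau_i)/(\tau_{i+1}-\tau_i) \in [0,1]$, write the $d+1$ vertices of $S_{ik}$ as $p_1,\dots,p_a$ lying in $\{z=\tau_i\}$ and $q_1,\dots,q_b$ lying in $\{z=\tau_{i+1}\}$, with $a+b = d+1$, $a,b\geq 1$ (if all vertices lie in one of the two hyperplanes the simplex is degenerate and $\phi_{ik}\equiv 0$, so we may assume $a,b\geq 1$). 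The hyperplane $\{z=\lambda\}$ meets $S_{ik}$ exactly when $t\in[0,1]$, and the slice is the convex hull of the points where $\{z=\lambda\}$ meets the edges $[p_\alpha, q_\beta]$ (edges joining two vertices at the same level never meet the strict interior of the slab). The intersection point of $\{z=\lambda\}$ with $[p_\alpha,q_\beta]$ is $(1-t)p_\alpha + t\, q_\beta$.

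Next I would argue that this slice is itself (the image under an affine projection of) a polytope whose vertices depend affinely on $t$, and in fact that it is a ``Cayley polytope''/prism-like object. The cleanest route: project to the first $d-1$ coordinates (an isomorphism on each horizontal slice), so the slice becomes $\mathrm{conv}\{\,(1-t)\bar p_\alpha + t\,\bar q_\beta : 1\le \alpha\le a,\ 1\le\beta\le b\,\}\subseteq\R^{d-1}$, where bars denote dropping the last coordinate. This is the \emph{Minkowski sum} $(1-t)\,\Delta_p + t\,\Delta_q$ of the scaled simplices $\Delta_p=\mathrm{conv}\{\bar p_\alpha\}$ (dimension $a-1$) and $\Delta_q=\mathrm{conv}\{\bar q_\beta\}$ (dimension $b-1$), living in complementary position so that the Minkowski sum has full dimension $(a-1)+(b-1)=d-1$ for $t\in(0,1)$. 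By the mixed-volume / polynomiality of Minkowski-sum volumes (Minkowski's theorem: $\mathrm{Vol}_{d-1}(sA+uB)$ is a homogeneous polynomial of degree $d-1$ in $s,u$), the function
$$
\phi_{ik}(\lambda)=\mathrm{Vol}_{d-1}\big((1-t)\Delta_p+t\,\Delta_q\big)=\sum_{j=0}^{d-1}\binom{d-1}{j}\,V_j\,(1-t)^{\,d-1-j}t^{\,j},
$$
where $V_j$ are the mixed volumes of $\Delta_p,\Delta_q$. Since $t$ is an affine function of $\lambda$, this exhibits $\phi_{ik}$ as a polynomial in $\lambda$ of degree $\le d-1$ on $[\tau_i,\tau_{i+1}]$, as claimed.

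Alternatively, to keep the argument self-contained and avoid invoking mixed volumes, I would instead triangulate the slice $(1-t)\Delta_p+t\Delta_q$ combinatorially in a way that is \emph{uniform in $t$}: a standard ``staircase'' triangulation of a product of simplices $\Delta^{a-1}\times\Delta^{b-1}$ into $\binom{d-1}{a-1}$ unimodular simplices pulls back to a triangulation of the Cayley construction, each piece of which, sliced at parameter $t$, is a simplex with $a$ vertices depending affinely on $t$ through the factor $(1-t)$ and $b$ vertices depending affinely through the factor $t$; the determinant formula for the volume of such a simplex is then visibly a polynomial in $t$ of degree exactly $(a-1)+(b-1)=d-1$ (the $(1-t)$'s and $t$'s each contribute multilinearly through the rows of the defining matrix), and summing over the finitely many pieces keeps it polynomial of degree $\le d-1$. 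The main obstacle is the bookkeeping in this last approach — making the triangulation of $\Delta^{a-1}\times\Delta^{b-1}$ explicit and checking that each simplex's volume, via a single $d\times d$ determinant with rows that are affine in $t$, expands to a polynomial of the stated degree — but this is routine linear algebra once the combinatorial setup is fixed, and it gives the degree bound $d-1$ with no appeal to the theory of mixed volumes.
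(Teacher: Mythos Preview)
Your argument is correct. Both you and the paper arrive at the same intermediate description of the slice: after the normalization $t=(\lambda-\tau_i)/(\tau_{i+1}-\tau_i)$ and projection to $\R^{d-1}$, the cross--section $S_{ik}\cap\{z=\lambda\}$ is the Minkowski sum $(1-t)\Delta_p+t\,\Delta_q$ of the bottom and top faces. The divergence is in how the volume of this sum is handled.

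You invoke Minkowski's theorem on mixed volumes (or, alternatively, a staircase triangulation of $\Delta^{a-1}\times\Delta^{b-1}$) to obtain polynomiality of degree $\le d-1$. The paper instead exploits the observation you already noted---that $\Delta_p$ and $\Delta_q$ lie in \emph{complementary} affine subspaces of $\R^{d-1}$, since the $d-1$ vectors $\bar p_\alpha-\bar p_1,\ \bar q_\beta-\bar q_1$ are linearly independent (this is exactly where the hypothesis that $S_{ik}$ is a genuine $d$-simplex enters). Because of this, the Minkowski sum is the image of the Cartesian product $(\Delta_p-\bar p_1)\times(\Delta_q-\bar q_1)$ under a fixed linear isomorphism, and rescaling the two factors by $1-t$ and $t$ is a linear map of determinant $(1-t)^{a-1}t^{\,b-1}$. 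The paper thus obtains the explicit monomial
\[
\phi_{ik}(\lambda)=\frac{(\tau_{i+1}-\lambda)^{a-1}(\lambda-\tau_i)^{b-1}}{(\tau_{i+1}-\tau_i)^{d-1}}\cdot(\text{constant}),
\]
with no appeal to mixed volumes and with the degree seen to be exactly $d-1$. Your route is more general (it would survive if the two faces failed to span complementarily), but in this setting the paper's linear-algebra shortcut is both lighter and sharper; in particular, it makes your alternative triangulation argument unnecessary.
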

\begin{proof}By the  hypothesis  $S_{ik}\cap \{z=\tau_i\} \simeq \Delta_r $ is $r$-simplex
given by the  
vertices $v_0, \ldots, v_r$  and $S_{ik}\cap \{z=\tau_{i+1}\}\simeq 
\Delta_s $ is $s$-simplex given by the vertices $w_0, \ldots, w_s$, 
where $\{v_0, \ldots, v_r, w_0, \ldots, w_s\}$ the vertex set of $S_{ik}$.  Note that 
since $r+1+s+1 = d+1$, we have $r+s = d-1$.

Let $\lambda \in [\tau_i, \tau_{i+1}]$. Let 
$\lambda_1 = \frac{\tau_{i+1}-\lambda}{\tau_{i+1}-\tau_i}$ and    
$\lambda_2 = \frac{\lambda-\tau_i}{\tau_{i+1}-\tau_i}$.
Then 

\noindent{\bf Claim}~(1) $S_{ik}\cap\{z=\lambda\} = 
\{\lambda_1(p_0+v_0)+\lambda_2
(p_1+w_0)\mid p_0\in \Delta_r-v_0,~~p_1\in \Delta_s-w_0\}$.

\noindent{Proof of the claim}: Any element $p$ of $S_{ik}\cap \{z=\lambda\}$ can  be written as  $p = \sum_{i=0}^r a_iv_i+\sum_{j=0}^{s}b_jw_j$, where $a_i, b_j \geq 0$ and 
$\sum_{i=0}^{r}a_i+\sum_{j=0}^{s}b_j = 1$.
Therefore 
$$p=\lambda_1\left(\frac{\sum_{i=1}^{r}a_i(v_i-v_0)}{\lambda_1}\right)+ \lambda_1 v_0+\lambda_2\left(\frac{\sum_{j=1}^{s}b_j(w_j-w_0)}{\lambda_2}\right)+\lambda_2w_0.$$
This proves the claim.

\noindent{\bf Claim}~(2)\quad $\{v_1-v_0, \ldots, v_r-v_0, w_1-w_0, \ldots, w_s-w_0\}$
is a basis of $\mathbb{R}^{d-1}$.

\noindent{Proof of the claim}: Note that for a choice of $\lambda \in (\tau_i,\tau_{i+1})$, the convex polytope $S_{ik}\cap\{z= \lambda\}$ is $d-1$ dimensional 
 (as the hyperplane $\{z= \lambda\}$ contains no vertices of $S_{ik}$, but the hyperplanes  $\{z=\tau_i\}$ and $\{z=\tau_{i+1}\}$ both contain some vertices 
of $S_{ik}$, we deduce that the hyperplane $\{z=\lambda\}$ intersects the interior of $S_{ik}$).
By Claim~(1), the set of $d-1$ vectors $\{v_1-v_0, \ldots, v_r-v_0, w_1-w_0, \ldots, w_s-w_0\}$ generate the $d-1$ dimensional convex set $S_{ik}\cap \{z= \lambda\}-(\lambda_1v_0+\lambda_2w_0)$. This proves the claim.

Let ${\widetilde{\Delta_{rs}}}$ denote the image of the map $\psi_{v,w}:\Delta_r\times \Delta_s \longrightarrow \mathbb{R}^{d-1}$ given by $(p_0, p_1)\mapsto p_0+p_1$. Since $\Delta_r$ and $\Delta_s$ are convex polytopes, the set  ${\widetilde{\Delta_{rs}}}$ is a convex polytope and of dimension $d-1$.
Now for a given  $\lambda \in [\tau_i,\tau_{i+1}]$, we can define the linear transformation 
$T_{\lambda}:\R^{d-1}\longrightarrow \R^{d-1}$ given by $\sum_i\alpha_i(v_i-v_0)+\sum_j\beta_j(w_j-w_0)\mapsto \lambda_1\sum_i\alpha_i(v_i-v_0)+\lambda_2\sum_j\beta_j(w_j-w_0)$ 
(this is a well defined map due to Claim~(2)).

Note that, for any $\lambda\in [\tau_i,\tau_{i+1}]$, $(S_{ik}\cap \{z=\lambda\})-
(\lambda_1v_0+\lambda_2w_0) =
T_\lambda({\widetilde{\Delta_{rs}}})$. Therefore 
$$\mbox{Vol}(S_{ik}\cap \{z=\lambda\}) = \frac{\mbox{Det}~
(T_\lambda)}{(d-1)!}\mbox{Vol}({\widetilde{\Delta_{rs}}}) = 
\frac{(\tau_{i+1}-\lambda)^r(\lambda-\tau_i)^s}{(\tau_{i+1}-\tau_i)^{r+s}(d-1)!} 
\mbox{Vol}(\widetilde{\Delta_{rs}}).$$
This proves the lemma.
\end{proof}

\begin{thm}
\label{thvol}
Let $P$ be a bounded full dimensional convex polytope in $\mathbb{R}^d$ 
which has no supporting hyperplane parallel to the hyperplane $\{z=0\}$. 
Then 
\begin{enumerate}
\item the function $\phi(t)=Vol_{d-1}(P\cap\{z=t\})$ is a polynomial of 
degree $\leq d-1$ 
on $(\tau_i, \tau_{i+1})$, for $i=0,\ldots, m-1$. Moreover 
\item $\phi$ is continuous on all of $\mathbb{R}$.\end{enumerate}
\end{thm}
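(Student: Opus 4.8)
The plan is to reduce everything to Lemma~\ref{thPP} by cutting $P$ into slabs along the levels $\tau_i$ and triangulating each slab compatibly with the slicing. First I would fix $i\in\{0,\dots,m-1\}$ and set $P_i:=P\cap\{\tau_i\le z\le\tau_{i+1}\}$. Since $P$ is full dimensional and $\tau_i<\tau_{i+1}$, $P_i$ is again a $d$-dimensional convex polytope. The key observation is that every vertex of $P_i$ lies in $\{z=\tau_i\}\cup\{z=\tau_{i+1}\}$: by Definition~\ref{dd1} no vertex of $P$ has $z$-coordinate strictly inside $(\tau_i,\tau_{i+1})$, so each vertex of $P_i$ is either a vertex of $P$ lying on one of the two bounding hyperplanes, or the intersection of an edge of $P$ with one of those hyperplanes.

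Next I would use the standard fact that a convex polytope admits a triangulation into simplices using no vertices beyond its own (e.g.\ a pulling triangulation), and write $P_i=\bigcup_k S_{ik}$ as a union of $d$-simplices with pairwise disjoint interiors whose vertices are vertices of $P_i$, hence all lie in $\{z=\tau_i\}\cup\{z=\tau_{i+1}\}$; being a genuine $d$-simplex, no $S_{ik}$ is contained in a hyperplane, so each $S_{ik}$ satisfies the hypothesis of Lemma~\ref{thPP}. Therefore $\lambda\mapsto\mathrm{Vol}_{d-1}(S_{ik}\cap\{z=\lambda\})$ is a polynomial of degree $\le d-1$ on $[\tau_i,\tau_{i+1}]$. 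For $t\in[\tau_i,\tau_{i+1}]$ the slices $S_{ik}\cap\{z=t\}$ have disjoint interiors inside the hyperplane $\{z=t\}$ and cover $P_i\cap\{z=t\}=P\cap\{z=t\}$, so additivity of $(d-1)$-volume gives $\phi(t)=\sum_k\mathrm{Vol}_{d-1}(S_{ik}\cap\{z=t\})$, a finite sum of polynomials of degree $\le d-1$. This proves (1) (and uses none of the hypothesis on horizontal supporting hyperplanes).

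For (2), on each $[\tau_i,\tau_{i+1}]$ the function $\phi$ agrees with a polynomial, hence is continuous there. At an interior level $\tau_i$ the polynomial expression coming from $[\tau_{i-1},\tau_i]$ and the one coming from $[\tau_i,\tau_{i+1}]$ both evaluate to $\mathrm{Vol}_{d-1}(P\cap\{z=\tau_i\})=\phi(\tau_i)$, again by the covering and disjointness of the relevant simplex slices, so $\phi$ is continuous on $[\tau_0,\tau_m]$. Outside $[\tau_0,\tau_m]$ we have $\phi\equiv 0$, and Lemma~\ref{lemoneside}(2) --- this is the only place the hypothesis is used --- gives $\phi(\tau_0)=\phi(\tau_m)=0$, so $\phi$ is continuous on all of $\R$.

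The main obstacle is really the input to the second paragraph: that each slab $P_i$ can be triangulated without creating new vertices, and that all its vertices sit on the two bounding hyperplanes --- this is exactly what lets Lemma~\ref{thPP} do the work. Both facts are standard, and once they are in hand the remainder is bookkeeping with additivity of volume and matching one-sided limits at the break points $\tau_i$.
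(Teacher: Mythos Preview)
Your proof is correct and follows essentially the same route as the paper: slab the polytope along the levels $\tau_i$, triangulate each slab $P_{[\tau_i,\tau_{i+1}]}$ using only its own vertices (which all lie on the two bounding hyperplanes), apply Lemma~\ref{thPP} to each simplex, and sum. The paper is slightly more explicit about why $\mathrm{Vol}_{d-1}(S_{ik}\cap S_{ik'}\cap\{z=t\})=0$, treating the endpoint case $t=\tau_i$ separately (two $d$-simplices in the triangulation cannot share a common $(d-1)$-face lying in $\{z=\tau_i\}$, since both lie in the half-space $\{z\ge\tau_i\}$); you sweep this under ``disjoint interiors,'' which is fine but worth one extra sentence at the endpoints.
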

\begin{proof}
(1)\quad Let $$P_{[\tau_i,\tau_{i+1}]} = \{p\in P\mid \tau_i\leq \pi(p) \leq \tau_{i+1}\}.$$
Note that $P_{[\tau_i,\tau_{i+1}]}$ is a convex polytope with vertices only at the 
level $\{z=\tau_i\}$ and $\{z= \tau_{i+1}\}$.  
For $t\in [\tau_i, \tau_{i+1}]$, we have 
$\phi(t) = \phi\mid_{P_{[\tau_i,\tau_{i+1}]}}(t)$. Therefore, it is enough to show that 
$\phi_i:= \phi\mid_{P_{[\tau_i,\tau_{i+1}]}}:[\tau_i,\tau_{i+1}] 
\longrightarrow [0,\infty)$
is a polynomial function in $\lambda$ of degree $\leq d-1$, for $i=0, \ldots, m-1$.

We take a triangulation (see [L]) of $P_{[\tau_i, \tau_{i+1}]}$ such 
that vertices of each triangulating simplex are vertices of $P_{[\tau_i, \tau_{i+1}]}$ 
itself.

Hence we can triangulate $P_{[\tau_i,\tau_{i+1}]} = \cup_{k=1}^{L_i}S_{ik}$ 
in $d$-simplices such that the  vertex set  of each simplex $S_{ik}$ is a subset  
of the vertex set of $P_{[\tau_i,\tau_{i+1}]}$.
Since vertices of $S_{ik}$ lie in $\{z=\tau_i\}$ and $\{z=\tau_{i+1}\}$, 
if $t\in(\tau_i, \tau_{i+1})$, where
$i=0,\ldots, m-1$, the plane $\{z=t\}$ does  not contain any face of $S_{ik}$.
  Therefore dimension of $S_{ik}\cap S_{ik'}\cap\{z=t\}$ is 
$< d-1$. For $t=\tau_i$, if dimension of  $S_{ik}\cap S_{ik'}\cap\{z=\tau_i\}$ is 
$d-1$, then dimension of both $S_{ik}\cap\{z=\tau_i\}$ and $S_{ik'}\cap\{z=\tau_i\}$ 
is $d-1$, it follows that $S_{ik}\cap\{z=\tau_i\}=S_{ik'}\cap\{z=\tau_i\}$. 
Hence for $x\in S_{ik}\cap\{z=\tau_i\}$, one can find $\epsilon>0$, such 
that $B^{d}(x, \epsilon)\cap \{z\geq \tau_i\}\subset S_{ik}\cap S_{ik'}$, 
a contradiction. We show that, for $t\in [\tau_i, \tau_{i+1}]$
$$\phi_i(t)=\sum_{k=1}^{L_i}\phi_{ik}(t) ,$$
where $\phi_{ik}(t)=\text{Vol}_{d-1}(S_{ik}\cap\{z=t\})$ is the volume 
function for the simplex $S_{ik}$, $k=1,\ldots, L_i$. Enough to show 
$$\text{Vol}_{d-1}\big((\cup_{k=1}^{l}S_{ik})  \cap \{z=t\}\big)=\sum_{k=1}^{l}
\phi_{ik}(t)$$
for $1\leq l\leq L_i$. This easily follows by induction because dimension of 
$S_{ik}\cap S_{ik'}\cap\{z=t\}$ is $< d-1$. 
This proves part on of the theorem.

 For the second part, it is 
enough to show that $\phi$ is continuous at $\tau_0$ and $\tau_m$. 
Since $\phi(\tau_0)=\phi_0(\tau_0)=0$, so is $\phi_{0k}(\tau_0)$, 
for $k=1,\ldots, L_0$. By Lemma \ref{thPP} each $\phi_{0k}$ is continuous at 
$\tau_0$. Hence so is $\phi$. similarly, $\phi$ is continuous at $\tau_m$.  
\end{proof}

\section{Hilbert-Kunz-Density function}
In [T2], the second author has defined the notion of Hilbert-Kunz 
Density function, and given its relation with the HK-multiplicity. We use the 
following interpretation of the HK multiplicity via the HK 
density function.

\begin{thm}~({\mbox{Theorem 1.1 in [T2]}})
\label{hkd}
Let $R$ be a standard graded Noetherian ring of dimension $d\geq 2$ over an algebraically 
closed field $K$ of char $p > 0$, and let $I\subset R$ be a homogeneous ideal  such that 
$l(R/I) < \infty$.
For $n \in \mathbb{N}$ and 
$q = p^n$, let $f_n:[0,\infty)\longto [0,\infty)$
be defined as  
$$f_n(R, I)(x) = \frac{1}{q^{d-1}}\ell(R/I^{[q]})_{\lfloor xq\rfloor}.$$

Then
 $\{f_n(R, I)\}$ converges uniformly to a compactly supported continuous function
 $f_{R, I}:[0, \infty)\longto [0, \infty)$, where  $f_{R,I}(x)=
\text{lim}_{n\to\infty} f_n(R,I)(x).$
and
$$e_{HK}(R, I)=\int_0^\infty f_{R, I}(x) \ dx.$$
\end{thm}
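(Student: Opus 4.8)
\emph{Proof strategy.} The plan is to reduce everything to three points --- uniform boundedness of the supports, uniform convergence of $\{f_{n}\}$, and continuity of the limit --- from which the integral identity follows formally: for $q=p^{n}$, summing the Riemann steps gives $\int_{0}^{\infty}f_{n}(R,I)(x)\,dx=\tfrac{1}{q^{d}}\ell(R/I^{[q]})\to e_{HK}(R,I)$ by Monsky's theorem, and on a fixed bounded interval a uniform limit integrates term by term. The support bound is easy: $\ell(R/I)<\infty$ gives $\sqrt I={\bf m}$, so ${\bf m}^{N}\subseteq I$ for some $N$; if $R$ is generated by $r$ forms of degree $1$, a monomial count gives ${\bf m}^{(a+r)q}\subseteq({\bf m}^{a})^{[q]}\subseteq{\bf m}^{aq}$ for all $a\ge 1$, hence $I^{[q]}\supseteq{\bf m}^{(N+r)q}$ and every $f_{n}$ is supported in $[0,N+r]$. (Monsky's associativity formula also lets one reduce at the outset to $R$ a domain, which is convenient but inessential.)

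For the convergence and continuity I would first study the cumulative functions $F_{n}(x):=\int_{0}^{x}f_{n}(R,I)(t)\,dt=\tfrac{1}{q^{d}}\ell\bigl(R/(I^{[q]}+{\bf m}^{\lceil xq\rceil})\bigr)$, which are nondecreasing and uniformly bounded. Fix a $p$-adic rational $x=a_{0}/p^{l}$ and put $J_{x}:=I^{[p^{l}]}+{\bf m}^{a_{0}}$, a \emph{fixed} homogeneous ${\bf m}$-primary ideal; for $q=p^{n}$ with $n\ge l$ the sandwich above gives
$$I^{[q]}+{\bf m}^{(a_{0}+r)p^{\,n-l}}\ \subseteq\ J_{x}^{[p^{\,n-l}]}\ \subseteq\ I^{[q]}+{\bf m}^{a_{0}p^{\,n-l}},$$
whence $F_{n}(x)\le\tfrac{1}{q^{d}}\ell(R/J_{x}^{[q/p^{l}]})\le F_{n}(x+\tfrac{r}{p^{l}})$. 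As $J_{x}$ is fixed, Monsky's theorem makes $\tfrac{1}{q^{d}}\ell(R/J_{x}^{[q/p^{l}]})$ converge; bracketing at $x$ and at $x-\tfrac{r}{p^{l}}$ squeezes $\liminf_{n}F_{n}(x)$ and $\limsup_{n}F_{n}(x)$ between two quantities differing by $O(x^{d-1}/p^{l})$ --- here one uses the crude bound $\ell\bigl(({\bf m}^{j})^{[Q]}/({\bf m}^{j+r})^{[Q]}\bigr)=O(j^{d-1}Q^{d})$ --- so, letting $l\to\infty$, $\lim_{n}F_{n}(x)$ exists for every $p$-adic rational $x$. The remaining, and crucial, ingredient is the equicontinuity estimate
$$\bigl|\,\ell(R/I^{[q]})_{m+1}-\ell(R/I^{[q]})_{m}\,\bigr|=O(q^{d-2})\qquad\text{uniformly in }m\text{ and }q=p^{n}.$$
With it, the $f_{n}$ are uniformly bounded on $[0,C]$ with uniformly small successive differences, so any uniform limit point $g$ of $\{f_{n}\}$ is Lipschitz and satisfies $\int_{0}^{x}g=\lim_{n}F_{n}(x)$ on a dense set; this pins $g$ uniquely, so $f_{n}\to f_{R,I}:=g$ uniformly, $f_{R,I}$ is continuous and compactly supported, and $e_{HK}(R,I)=\int_{0}^{\infty}f_{R,I}$.

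The equicontinuity estimate is the genuine obstacle; I expect it to go by induction on $d$, cutting by a general linear form $\ell_{0}\in R_{1}$. Taking the degree-$m$ component of the four-term exact sequence $0\to\ker\ell_{0}\to R/I^{[q]}\xrightarrow{\ell_{0}}(R/I^{[q]})(1)\to\bigl((R/\ell_{0}R)/\overline I^{\,[q]}\bigr)(1)\to 0$ reduces the bound to bounding $\dim\bigl((R/\ell_{0}R)/\overline I^{\,[q]}\bigr)_{m+1}$ and $\dim(\ker\ell_{0})_{m}$ separately by $O(q^{d-2})$. The cokernel term is a graded piece of a Frobenius-power quotient of the $(d-1)$-dimensional standard graded ring $R/\ell_{0}R$, so by the inductive hypothesis --- its density function exists, hence is bounded --- it is $O(q^{d-2})$ uniformly in $m$. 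The kernel term $(I^{[q]}:\ell_{0})/I^{[q]}$, a finite-length graded $R/\ell_{0}R$-module of total length $\ell(\coker\ell_{0})=O(q^{d-1})$, is the hard part: one must rule out concentration in a single degree, which I would do by describing $(I^{[q]}:\ell_{0})$ as a Frobenius power up to a degree shift of size $O(q)$ and a lower-dimensional correction (exact in the monomial model, and in general via a Gr\"obner degeneration or a direct syzygy analysis), thereby again reducing the bound to the $(d-1)$-dimensional statement. With the base case $d=2$ handled directly --- one-dimensional quotients have bounded Hilbert functions --- the induction then closes, and with it the theorem.
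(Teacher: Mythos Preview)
This theorem is quoted from [T2] and is not proved in the present paper, so there is no proof here to compare against directly. What follows assesses your sketch on its own and against the argument in [T2].

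Your overall architecture is sound: the uniform support bound is correct, the integral identity follows formally from uniform convergence on a bounded interval, and everything does reduce to the equicontinuity estimate $\bigl|\ell(R/I^{[q]})_{m+1}-\ell(R/I^{[q]})_{m}\bigr|=O(q^{d-2})$ uniformly in $m$ and $q$. The gap is precisely the step you yourself flag as ``the hard part''. In the four-term sequence coming from multiplication by a general linear form $\ell_0$, the cokernel piece $\bigl(R/(\ell_0 R + I^{[q]})\bigr)_{m+1}$ is indeed $O(q^{d-2})$ by induction on $d$, but your treatment of the kernel piece $\bigl((I^{[q]}:\ell_0)/I^{[q]}\bigr)_m$ is not a proof. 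The assertion that $(I^{[q]}:\ell_0)$ is ``a Frobenius power up to a degree shift of size $O(q)$ and a lower-dimensional correction'' is not a known structural result; colon ideals of Frobenius powers by a single linear form depend on the full resolution of $I$, and nothing you have written rules out their Hilbert function concentrating in a single degree at scale $q^{d-1}$. Invoking a Gr\"obner degeneration or an unspecified ``direct syzygy analysis'' does not close this --- it \emph{is} the content of the theorem, repackaged.

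For comparison, the proof in [T2] avoids colon ideals altogether. It passes to $X=\operatorname{Proj} R$ and the syzygy bundle $V=\ker\bigl(\bigoplus_i \sO_X(-d_i)\twoheadrightarrow \sO_X\bigr)$ associated to a generating set of $I$, expresses $\ell(R/I^{[q]})_m$ in terms of cohomology of twists of the Frobenius pullback $(F^n)^*V$, and then uses boundedness results for the cohomology of this family of sheaves to obtain both the uniform $O(q^{d-2})$ control on successive differences and the Cauchy estimate $\|f_{n+1}-f_n\|_\infty = O(p^{-n})$ directly. That sheaf-theoretic input is what stands in for your missing kernel estimate; a purely ring-theoretic induction on $d$ along the lines you propose is not known to go through.
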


\begin{defn}\label{d2}For a given pair $(X, D)$ (Definition~\ref{dtp})
 we  have the associated standard graded ring $K[S]$.
We define the associated density function $HKd(X, D) = HKd(K[S], {\bf m})$,
 where ${\bf m}$ is the graded maximal ideal of  $K[S]$.
Therefore, for $q=p^n$ where $n\geq 1$, 
$$HKd(X, D)(\lambda)= \lim_{n\to \infty} f_n(\lambda)
= \lim_{n\to \infty}\frac{1}{q^{d-1}}
\ell\left(\frac{K[S]}{{\bf m}^{[q]}}\right)_{\lfloor q\lambda \rfloor}.$$ \end{defn}

\begin{notations}\label{n2}In $\R^d$, we denote the last ($d^{th}$) 
coordinate by 
$z$. Let  $\lambda \in \R_{\geq 0}$. Then
\begin{enumerate}
\item For $P\subseteq \R^{d-1}$  we 
denote $P\times \{z=\lambda\} = (P\times \R)\cap 
\{(\underline{x}, \lambda)\mid \underline{x}\in \R^{d-1} \}\subseteq \R^d$.
\item For $Q\subseteq \R^d$ we denote $Q\cap \{z=\lambda\} = 
Q\cap \{(\underline{x}, \lambda)\mid \underline{x}\in \R^{d-1} \}
\subseteq \R^d$.
\end{enumerate}
\end{notations}

\begin{rmk}In the proof of the earlier stated Theorem of K. Eto in [E] (see introduction), 
he has asserted that 
$\sP$ is a finite union of rational polytopes, which do not intersect at interior points.
In the following lemma we give a detailed proof of this in Lemma~\ref{l1}~(1). 
\end{rmk}
  
\begin{lemma}\label{l1}Let $${\sP_D} = {C_D \setminus 
(\cup_{u_i\in P_D\cap\mathbb{Z}^{d-1}} ((u_i,1) + C_D))}.$$ 
 Then 
\begin{enumerate}
\item ${\overline{\mathcal{P}}_D}$ is a finite 
union of 
rational polytopes $P_1, P_2, \ldots , P_s$ containing the origin such that $P_i \cap 
P_{j}$ is a rational polytope of dimension $< d$ if $i\neq j$. 
Moreover 
\item 
\begin{enumerate}
\item $\dim~(\partial(P_j)\cap \{z= a\}) < d-1$, where 
for a closed set $A\subseteq \R^d$, the set $\partial(A)$ denotes its 
boundary.
\item $\dim(P_i\cap P_j\cap \{z= a\}) < d-1$, for any $a\in \mathbb{R}$.
\end{enumerate}
\end{enumerate}
\end{lemma}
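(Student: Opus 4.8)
\textbf{Proof proposal for Lemma~\ref{l1}.}

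The plan is to analyze $\sP_D$ as the cone $C_D$ with finitely many open cone-translates removed. First I would observe that $C_D \subseteq M_\R \times \R_{\geq 0}$ is a rational polyhedral cone with apex at the origin, and that its slice $C_D \cap \{z = a\}$ equals $aP_D \times \{a\}$ for $a \geq 0$; likewise the slice of $(u_i,1) + C_D$ at height $a$ is $(u_i + (a-1)P_D) \times \{a\}$ for $a \geq 1$ (and empty for $a < 1$). Thus at each height $a$, $\sP_D \cap \{z=a\}$ is obtained from the polytope $aP_D$ by deleting the interiors of the finitely many translated polytopes $u_i + (a-1)P_D$ for $u_i \in P_D \cap \Z^{d-1}$. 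The key bounded-ness input is that once $a$ exceeds some threshold the translates cover $aP_D$ entirely — this is where I would invoke the covering property of the lattice points $P_D \cap \Z^{d-1}$ inside $P_D$ relative to $(a-1)P_D$ (the same phenomenon quantified later in Section~5). Hence $\overline{\sP}_D$ is a bounded subset of $C_D$, and being the closure of a set defined by finitely many rational linear inequalities (the facet inequalities of $C_D$, together with, in each region, the reversed facet inequalities of the various $(u_i,1)+C_D$), it is a finite union of rational polytopes $P_1, \dots, P_s$. Each $P_j$ contains the origin because a small neighborhood of $\underline 0$ in $C_D$ meets none of the translates $(u_i,1)+C_D$, all of which live in $\{z \geq 1\}$.

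For the overlap statement in (1), I would argue that two distinct maximal pieces $P_i, P_j$ in such a decomposition meet only along a common face, which is cut out by at least one extra rational hyperplane equality, hence has dimension $< d$. Concretely, the pieces can be indexed by which subset of the defining half-spaces one sits in; distinct full-dimensional pieces are separated by a rational hyperplane, and their intersection lies in that hyperplane.

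For part (2), the crucial point is that \emph{none of the relevant bounding hyperplanes is horizontal}, i.e. of the form $\{z = c\}$. Indeed, the facets of $C_D$ are spanned by rays through the origin and through $(v, 1)$ for vertices $v$ of $P_D$, so each facet hyperplane of $C_D$ contains the origin and a point at height $1$ — it cannot be $\{z=c\}$; the same applies to every translate $(u_i,1)+C_D$. Therefore every facet hyperplane $H$ occurring in the description of the $P_j$ satisfies: $H \cap \{z=a\}$ has dimension exactly $d-2$ for every $a$ (it is never all of $\{z=a\}$ and never empty in the range where it matters). Now $\partial(P_j)$ is a finite union of such facet pieces, each lying in some non-horizontal $H$, so $\partial(P_j) \cap \{z=a\} \subseteq \bigcup_H (H \cap \{z=a\})$ has dimension $\leq d-2 < d-1$, giving (2)(a). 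For (2)(b), by part (1) $P_i \cap P_j$ (for $i \neq j$) is contained in the boundary of $P_i$ (it is a proper face), hence $P_i \cap P_j \cap \{z=a\} \subseteq \partial(P_i) \cap \{z=a\}$, and (2)(a) finishes it.

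The main obstacle I anticipate is the boundedness of $\overline{\sP}_D$: one must show the deleted translates eventually exhaust $C_D$, equivalently that for $a$ large every point of $aP_D$ lies in some $u_i + (a-1)P_D$ with $u_i \in P_D \cap \Z^{d-1}$. This reduces to showing that for $w \in P_D$ there is a lattice point $u_i \in P_D \cap \Z^{d-1}$ with $a w - u_i \in (a-1)P_D$, i.e. $\tfrac{aw - u_i}{a-1} \in P_D$; choosing $u_i$ to be a lattice point of $P_D$ near $w$ (which exists once $P_D$ is large enough to contain a fundamental domain's worth of translates, so one may need to pass to a multiple, but here $D$ very ample suffices as in [Fu]) makes $\tfrac{aw-u_i}{a-1}$ a small perturbation of $w$ staying inside $P_D$ for $a \gg 0$. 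This is essentially the content deferred to Section~5, and citing it (or giving the short convexity estimate) is the one genuinely non-formal ingredient; everything else is bookkeeping with rational hyperplane arrangements.
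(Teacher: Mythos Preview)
Your framework for part~(2)---the facet hyperplanes of $C_D$ and of every translate $(u_i,1)+C_D$ are non-horizontal, so any face of any piece meets $\{z=a\}$ in dimension $\leq d-2$---is correct and is exactly what the paper does there. The problem is part~(1).

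The lemma requires a decomposition $\overline{\sP}_D=P_1\cup\cdots\cup P_s$ in which \emph{every} $P_j$ contains the origin \emph{and} distinct pieces meet in dimension $<d$. Your justification for the origin condition (``a small neighborhood of $\underline 0$ in $C_D$ meets none of the translates'') only shows $\underline 0\in\overline{\sP}_D$; it does not place $\underline 0$ in each $P_j$. And your two candidate decompositions are in tension: if the pieces are indexed by choosing, for each translate, one facet inequality to reverse, then two different sign-choices can give regions with overlapping interiors, so $\dim(P_i\cap P_j)<d$ fails; if instead you take the chambers of the full hyperplane arrangement, the interiors are disjoint but most chambers lie far from the origin. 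Neither route gives both properties simultaneously, so as written the argument does not prove~(1).

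The paper's construction is designed precisely to get both. For $d-1\geq 2$ it introduces the family
\[
W_D=\bigl\{\,H_{iu}\ :\ C_{0i}\ \text{a $(d{-}3)$-face of }P_D,\ \ u\in P_D\cap\Z^{d-1}\,\bigr\},
\]
where $H_{iu}$ is the linear span of $(\underline 0)$, $(u,1)$, and $\{(v,1):v\ \text{a vertex of }C_{0i}\}$. Since every $H_{iu}$ passes through the origin, the resulting subdivision of $C_D$ is into \emph{subcones} $F_j$ with common apex $\underline 0$; this is what forces every piece to contain the origin. The substantive step---and the one your proposal is missing---is the claim that for each $F_j$ and each $u\in P_D\cap\Z^{d-1}$ the set $F_j\cap[(u,1)+C_D]^c$ is convex. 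The paper proves this by showing $F_j\subseteq C^d(C_i,u)$ for some facet $C_i$ of $P_D$ with $u\notin C_i$, writing points of $F_j$ as $\alpha(u,1)+c$ with $c\in C^{d-1}(C_i)$ and $0\leq\alpha<1$, and invoking Lemma~\ref{sl} to exclude $(u,1)+C_D$. One then takes $P_j=\overline{F_j\setminus\bigcup_i((u_i,1)+C_D)}$.

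Finally, boundedness is not the main obstacle here; the paper defers it to Lemma~\ref{l3}. The genuinely non-formal ingredient in Lemma~\ref{l1} is the convexity claim above, which your proposal does not supply.
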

\begin{proof} 
For $d-1=1$, the toric pair $(X, D) = (\P^1,\sO_{\P^1}(n))$, for 
some integer $n\geq 1$. Therefore the lemma is obvious 
from Example~\ref{ep1}. Henceforth we can assume that $d-1\geq 2$.

Let $P_D\subset \R^{d-1}$ be the  convex polytope of dimension $d-1$ 
associated to the pair $(X, D)$. 
Without loss of generality we assume that $P_D$ has the origin as one of the vertices. 
Let $C_D = \mbox{Cone}~(P_D\times \{z=1\})$.

{\underline {\mbox Part}~(1)}:\quad
Let $S= \{F_j\}_j$ be the set of all subcones of $C_D$ obtained by dividing $C_D$ by the
set of hyperplanes 
$$W_D = \{H_{iu}\mid C_{0i} \in \{d-3~~\mbox{faces of}~~ P_D\},~~ u \in P_D\cap \Z^{d-1}\},$$ 
where 
$$H_{iu} = ~~\mbox{the affine span of}~~\{(v_{ik},1), (u,1),
(\underline{0})\mid v_{ik}\in~~\mbox{the vertex set of}~~C_{0i}\}.$$
Thus, the $F_j$ are the closures of the connected components of $C_D\cup_{H\in W_D}H$.
\vspace{5pt}

\noindent{\bf Claim}\quad For each $F_j \in S$ and for each  $u\in P_D\cap \Z^{d-1}$, 
the set $F_j\cap [(u,1)+C_D]^c$ is  convex.

We assume the claim for the moment.

Now we have $$\sP_D = C_D \setminus 
\cup_{u_i\in P_D\cap\mathbb{Z}^{d-1}} ((u_i,1) + C_D) = \cup_jF_j
\setminus
\left\{\cup_{u_i\in P_D\cap\mathbb{Z}^{d-1}} ((u_i,1) + C_D)\right\}.$$ 
  Hence, Part~(1) of the lemma follows by taking  
$$P_j= {\overline{F_j
\setminus \cup_{u_i\in P_D\cap\mathbb{Z}^{d-1}} ((u_i,1) + C_D)}} = 
{\overline{\cap_{u_i\in P_D\cap\mathbb{Z}^{d-1}} F_j
\setminus ((u_i,1) + C_D)}}.$$

\vspace{5pt}

\noindent{\underline{Proof of the claim}:\quad First we prove that for  
given $F_j \in S$ and 
$u\in P_D\cap \Z^{d-1}$, there is a facet $C_i$ of 
$P_D$ such that 
$F_j \subset C^d(C_i, u)$, where
$C^d(C_i,u)$ is the 
 cone generated by $(\underline{0})$, $(u,1)$ and all $(v,1)$, where 
$v$ is a vertex of $C_i$.

Consider the set $\{C^d(C_i,u)\mid C_i~~\mbox{is a facet of}~~P_D,~~ u \notin C_i\}$, 
so that, by construction,  $C^d(C_i,u)$ is a $d$-dimensional cone.
The facets of  any such $C^d(C_i,u)$, other than $C^{d-1}(C_i) = \mbox{Cone over}~C_i$,
are given by the set 
$\{H(C_{ij})\cap C^d(C_i,u)\mid C_{ij} \in~~\{\mbox{facets of}~~C_i\}\},$
where $$H(C_{ij}) =~~\mbox{the affine span of}~~ 
\{(v,1), (\underline{0}), (u,1)\mid v\in~~\mbox{vertex set of}~ C_{ij}\}$$  
are hyperplanes. Since any such $H_{ij}\in W_D$, any such cone $C^d(C_i, u)$ 
is a union of some subset of $S$.
On the other hand note that, for a given $u\in P_D$ we have 
 $C_D = \bigcup_i(C^d(C_i,u))$, where $C_i$ are the facets of $P_D$, and 
the interiors of the $C^d(C_i, u)$ are disjoint.

Hence given $F_j \in S$ and $u\in P_D\cap \Z^{d-1}$ there is a facet $C_i$ of 
$P_D$ such that 
$F_j\subseteq C^d(C_i,u)$.

Now we prove  
the convexity of the set  $F_j\cap [(u,1)+C_D]^c$.

Fix a facet $C_i$ with $F_j\subseteq C^d(C_i,u)$.
Let $x, y\in F_j\cap [(u,1)+C_D]^c$. Then
 $x, y \in C^d(C_i,u)\cap [(u,1)+C_D]^c$, 
therefore we must have expressions  
$$x= \alpha_1(u,1) + c_1~~\mbox{and}~~y= \alpha_2(u,1) + c_2,~~\mbox{where}~~
c_1, c_2\in C^{d-1}(C_i)~~\mbox{and}~~ 
0\leq \alpha_1, \alpha_2 <1.$$ 
This implies that if 
$z$ is any point in the line segment joining $x$ and $y$ then
 $z= l_0(u,1)+c_3$, where $0\leq l_0<1$ and $c_3 \in C^{d-1}(C_i)$.

Since $F_j$ is convex, $z\in F_j$. So we need to prove that 
$z\in [(u,1)+C_D]^c$.

Suppose $z\in (u,1)+C_D$. Then we have 
$z= (u,1)+c$, where $c\in C_D$. This implies 
$$(1-l_0)(u,1)+c= c_3\in C^{d-1}(C_i)\cap [(1-l_0)(u,1)+C_D].$$ 
Now $C^d(C_i,u)$ is a $d$-dimensional cone, which implies $(u, 1)\not\in 
C^{d-1}(C_i)$. Moreover $C^{d-1}(C_i)$ is a facet of $C_D$.
Hence we have a contradiction by the claim given below.
Therefore  we deduce that $z\in [(u,1)+C_D]^c$. This proves that $z
\in  F_j\cap [(u,1)+C_D]^c$. 

Now the convexity of the set $F_j\cap [(u,1)+C_D]^c$ follows from 
Lemma~\ref{sl} given below.

\vspace{5pt}

\noindent \underline {\mbox Part}~(2)}:\quad 
If $C_1$ and $C_2$ are sets in $\R^d$ then
$\partial(C_1\cap C_2) \subseteq \partial(C_1)\cup \partial(C_2)$, (where
 $\partial(C)$ denotes the boundary of $C$). 
Therefore for $P_j$ as above, we have
$$\partial(P_j) \subseteq   \partial(F_j)
\cup_{u_i\in P_D\cap\mathbb{Z}^{d-1}} \partial({\overline{(u_i,1) + C_D)^c}}) 
\subseteq \partial(F_j)
\cup_{u_i\in P_D\cap\Z^{d-1}} \partial((u_i,1) + C_D).$$ 
Therefore
$$\partial(P_j)\subseteq \mbox{facet of}~~(F_j)
\cup_{u_i\in P_D\cap\Z^{d-1}}~~~\mbox{facet of}~~((u_i,1) + C_D).$$
We note that any facet of $(u_i,1) + C_D$ is a translate of a facet of $C_D$ by the point 
$(u_i,1)$. On the other hand any facet of $F_j$ is a subset of an element of 
$W_D$, where the set $W_D$ is defined as in (\ref{e1}) above.
In particular for any facet  $F$ from these set of facets, we have
$\dim~(F\cap \{z = a\}) <d-1$, for any $a\in \R$.
This proves part~(2)(a). 
Part~(2)(b) follows from (a), as for $i\neq j$, the convex polytopes $P_i$ 
and $P_j$ intersects only at their boundary. This completes the proof of   
the lemma.
\end{proof}

\vspace{5pt}

\begin{lemma}\label{sl}Let ${\tilde u} = (u,1)\in C_D$ such that 
${\tilde u}\not\in F$, where $F$ is a facet of $C_D$. Then for 
any $\epsilon >0$, we have
$[\epsilon{\tilde u} + C_D]\cap F = \phi$.\end{lemma}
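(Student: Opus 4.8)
The plan is to argue by separation, using the fact that $F$ being a facet of the cone $C_D$ means $F$ lies on a supporting hyperplane of $C_D$ through the origin. Concretely, since $C_D$ is a full-dimensional rational polyhedral cone and $F$ is one of its facets, there is a linear functional $\psi\colon\R^d\to\R$ such that $\psi(x)\geq 0$ for all $x\in C_D$, with equality exactly on the face containing $F$; in particular $\psi|_F\equiv 0$, while $\psi(\tilde u)>0$ because $\tilde u\in C_D\setminus F$ and $\tilde u$ does not lie on that supporting hyperplane (here we use that $F$ is genuinely a facet, so the zero set of $\psi$ meets $C_D$ precisely in the facet, not in a larger face). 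First I would record these two inequalities $\psi|_{C_D}\geq 0$, $\psi(\tilde u)>0$, $\psi|_F=0$.

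Next, take any point $y\in[\epsilon\tilde u+C_D]\cap F$. Write $y=\epsilon\tilde u+c$ with $c\in C_D$. Applying $\psi$ and using linearity gives
$$\psi(y)=\epsilon\,\psi(\tilde u)+\psi(c)\geq \epsilon\,\psi(\tilde u)>0,$$
since $\epsilon>0$, $\psi(\tilde u)>0$, and $\psi(c)\geq 0$. On the other hand $y\in F$ forces $\psi(y)=0$, a contradiction. Hence no such $y$ exists, i.e.\ $[\epsilon\tilde u+C_D]\cap F=\emptyset$, which is the assertion.

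The only genuine point to be careful about—and the place where the hypotheses are really used—is the strict inequality $\psi(\tilde u)>0$. This requires that the supporting hyperplane $\{\psi=0\}$ cut out of $C_D$ exactly the facet $F$ (equivalently, the minimal face of $C_D$ through which $F$ spans); such a $\psi$ exists precisely because $F$ is a facet of the polyhedral cone $C_D$, and then $\tilde u\notin F$ together with convexity of $C_D$ gives $\psi(\tilde u)\neq 0$, hence $>0$ by $\psi|_{C_D}\geq 0$. Everything else is a one-line computation, so I do not expect any real obstacle beyond setting up this separating functional correctly.
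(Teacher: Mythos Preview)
Your proof is correct and essentially identical to the paper's: both choose a linear functional $\psi$ (the paper writes it as $\sum a_i x_i$ coming from the supporting hyperplane $H$ of the facet $F$) which is nonnegative on $C_D$, zero on $F$, and strictly positive at $\tilde u$, then observe that $\psi(\epsilon\tilde u + c) = \epsilon\psi(\tilde u)+\psi(c)>0$ rules out membership in $F$. Your extra care in justifying $\psi(\tilde u)>0$ is welcome but the argument is the same.
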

\begin{proof}
Note that $F= H\cap C_D$, for some hyperplane 
$H = \{\underline{x}= (x_1,\ldots, x_d)\in \R^d\mid \sum a_ix_i = 0, ~~~a_i\in \R\}$. 
Without loss of generality, we assume that   $C_D\subseteq H_+ = 
\{\underline{x}\in \R^d\mid \sum a_ix_i\geq 0\}$.
Therefore for any $\underline{m} = (m_1, \ldots, m_d)\in C_D$ we have 
$\sum_ia_im_i \geq 0$. Moreover, since $\tilde{u} \not\in F$, we have 
$\sum_ia_iu_i >0$.
This imples, for $\epsilon({\tilde u})+\underline{m} = 
(\epsilon({u_1})+{m_1}, \ldots,  \epsilon({u_d})+{m_d})$,
we have $\sum_ia_i(\epsilon({u_i})+{m_i}) >0$. Hence 
$\epsilon({\tilde u})+\underline{m}\in H_+\setminus H\subset F^c$. 
In particular $[\epsilon({\tilde u}) +C_D]\cap F=\phi$.
This proves the lemma.\end{proof}

\vspace{5pt}

Now we are ready to prove Theorem \ref{t2}.
\begin{proof}[{\underline{Proof of Theorem}} \ref{t2}]\quad We note that 
$$n\mathcal{P}_D=\{p\in C_D\ |\ p\notin n(u_i,1) +C_D , \text{for all}\ i=1,\ldots,
r\}.$$ 
Let ${S'}$ be the normalization of the monoid $S$. Hence $K[S']$ is the 
integral closure of $K[S]$ (Theorem 4.39, [BG]). Hence there exists 
$N_0\in \Z$ such that $K[S]_n=K[S']_n$ for all $n\geq N_0$ 
(by Exercise 5.14, [Har]). Hence, for every $\lambda \in \R$,  
there exists 
$n_\lambda\in \N$  such that for all  $n\geq n_\lambda$, we have  
$$\ell_{K[S]}\left(\frac{K[S]}{(Y_1^n,\ldots,Y_r^n)}
\right)_{\lfloor n\lambda\rfloor}=\ell_{K[S']}
\left(\frac{K[S']}{(Y_1^n,\ldots,Y_r^n)}\right)_{\lfloor n\lambda\rfloor}.$$ 

Since $C_D\cap\Z^d =S'$ (by Proposition 2.22, [BG]),
$$n\mathcal{P}_D\cap\Z^d =\{p\in S'\ \arrowvert\ p\notin n(u_i, 1) + C_D,
~~~\mbox{for every}~~~u_i\}.$$

Thus for $n\geq n_{\lambda}$,
$$\ell_{K[S]}\left(\frac{K[S]}{(Y_1^n,\ldots,Y_r^n)}\right)_{\lfloor n\lambda\rfloor}
= \#|(n\mathcal{P}_D\cap \{z=\lfloor n\lambda\rfloor\}|.$$
We denote
$$i(\mathcal{P}_D, n) = \#|n\mathcal{P}_D\cap  \mathbb{Z}^d|~~ \mbox{and}~~  
i(\mathcal{P}_D, n, m) = \#|(nP_D\cap \{z=m\})\cap \Z^{d-1}|.$$

By Lemma \ref{l1}, we have 
$\overline{\mathcal{P}}_D = P_1\cup P_2\cup\cdots \cup P_s$, where
 $P_1, P_2, \ldots , P_s$ are convex rational polytopes 
 such that 
$\dim~(P_i \cap  P_{j}\cap \{z = a\})< d-1$ and $\dim~(\partial (P_j)\cap \{z=a\}) < d-1$, 
for every $a\in \R$.

\vspace{5pt}
\noindent{\bf Claim}\quad If $Q$ is a $d$-dimensional convex polytope then 
for given $\lambda = m_0/q_0$, where  $q_0=p^{n_0}$, for some $n_0\geq 1$ and $q =p^n$,
 we have, 
$$\lim_{q\to\infty}\frac{i(Q,q, 
\lfloor q\lambda\rfloor) }{q^{d-1}}= \mbox{Vol}_{d-1}(Q\cap\{z= \lambda\}).$$

\vspace{5pt}
\noindent{\underline{Proof of the claim}}:\quad
Let  $q =p^n$, where $n\geq n_0$. Note that  we have 
 $\lfloor q\lambda\rfloor = qm_0/q_0$. 
Therefore 
$$ i(Q,q, \lfloor q\lambda\rfloor) = i(qQ\cap\{z=\frac{qm_0}{q_0}\}) =  
i(Q', \frac{q}{q_0}),$$ 
where $Q' = (q_0Q\cap\{z=m_0\})$. 
Now, by Remark~\ref{St}, 
 $$\lim_{q\to\infty}\frac{i(Q,q, \lfloor q\lambda\rfloor) }{q^{d-1}}
= \lim_{q\to\infty}\frac{i(Q',q/q_0)}{q^{d-1}}
= \frac{\mbox{Vol}_{d-1}(Q')}{q_0^{d-1}} = \mbox{Vol}_{d-1}(Q\cap\{z= \lambda\}).$$
This proves the claim.

Let $P_{\leq j}  = P_1 
\cup\ldots \cup P_j$ for  $1\leq j \leq s$. Then
$$i(P_{\leq j_0} , q, {\lfloor q\lambda\rfloor})= i(P_{\leq j_0-1}, q, 
{\lfloor q\lambda\rfloor}) + i(P_{j_0} , q, {\lfloor q\lambda\rfloor}) - 
i([P_{\leq j_0-1} \cap P_{j_0}], q, \lfloor q\lambda\rfloor ).$$
Now 
$$i([P_{\leq j_0-1}\cap P_{j_0}], q, \lfloor q\lambda\rfloor)
= i\left(\frac{q}{q_0}\left[q_0(P_{\leq j_0-1}\cap P_{j_0})\cap 
\{z=m_0\}\right]\right).$$
Therefore, by Lemma~\ref{l1}
$$\lim_{q\to\infty}\frac{i( [P_{\leq j_0-1}\cap P_{j_0}], q,
 \lfloor q\lambda\rfloor)}{
q^{d-1}} = \mbox{Vol}_{d-1}([P_{\leq j_0-1}\cap P_{j_0}]\cap\{z=\frac{m_0}{q_0}\}) = 0.$$
Therefore, by Theorem~1.1 of [T2], we have 
$$HKd(X, D)(\lambda) = 
\lim_{n\to\infty}f_n(\lambda) =\lim_{n\to\infty}\frac{i({\mathcal{P}}_D,q,\lfloor 
q\lambda\rfloor)}{q^{d-1}}
= \lim_{n\to\infty}\sum \frac{i(P_j ,n,\lfloor q\lambda\rfloor)}{q^{d-1}}$$
$$= \sum_j \text{Vol}(P_j\cap \{z=\frac{m_0}{q_0}\}) = \sum_{j=1}^s\phi_{P_j},$$
 where, for the $d$ dimensional polytope $P_j$, the function  
$\phi_{P_j}:(-\infty, \infty) \longto (-\infty, \infty)$ is the
sectional volume function, given by
$t\mapsto \mbox{Vol}_{d-1}(P_j\cap \{z= t\})$.

Note that, by Theorem~1.1 of [T2], $HKd(X, D)$ is a continuous function.
and by Theorem~\ref{thvol}, the function 
$\sum_{j=1}^s\phi_{P_j}$, is also continuous. Since both $HKd(X,D)$ and 
$\sum_{j=1}^s\phi_{P_j}$
 agree on the dense subset 
$\{m/q\mid m\in \Z_{\geq 0}, q=p^n, n\in\Z_{\geq 0}\}\subset \R_{\geq 0}$. 
we conclude that, for every $\lambda \in \R$,  
$$HKd(X, D)(\lambda)  = \sum_j \text{Vol}_{d-1}(P_j\cap 
\{z=\lambda\})=\text{Vol}_{d-1}(\overline{\mathcal{P}}_D\cap \{z=\lambda\}),$$
where the last equality follows from part~(2) of Lemma~\ref{l1}. This completes the proof of the theorem.
\end{proof}

\begin{rmk}
For $\lambda\in\Q_{\geq 0}$, we remark that a generalised (in the sense of Conca 
[Co]) HK density function exists. 
Define 
$$\hat{f}_n(\lambda)=\ell_{K[S]}\left(\frac{K[S]}{(Y_1^n,\ldots,
Y_r^n)}\right)_{\lfloor n\lambda\rfloor}.$$ 
\noindent{\bf Claim}\quad If
$\lambda\in \Q_{\geq 0}$ then 
$HKd(X, D)(\lambda) = \lim_{n\to \infty}\hat{f}_n(\lambda)$.

\vspace{5pt}

\noindent{\underline{Proof of the claim}}:\quad 
Enough to prove that for $\lambda\in \Q_{\geq 0}$, 
 the sequence $\{\hat{f}_n(\lambda)\}$ 
(which contains $\{f_n(\lambda)\}$ as a subsequence) converges. 
Suppose $\lambda=r/s$ with $r\in \Z_{\geq 0}, s\in\Z_{>0}, (r, s)=1$. 
For $n\in \N$, by division algorithm we write $n=l_ns+s_n,$ for $l_n\in\N, 
0\leq s_n <s.$ Write $r_n=\lfloor s_n\frac{r}{s}\rfloor$. 
Then $\lfloor n\frac{r}{s}\rfloor=l_nr+r_n.$ Write 
$$Q_{jn}=\frac{l_ns+s_n}{l_nr+r_n}P_j\cap\{z=1\}~~~\mbox{and}~~~ 
Q_{j0}=\frac{s}{r}P_j\cap\{z=1\}$$ 
for each $P_j$ as in the proof above. 
Then 
$$\displaystyle{nP_j\cap\{z=\lfloor n\lambda\rfloor\}=
(l_nr+r_n)\left(\frac{(l_ns+s_n)}{(l_nr+r_n)}P_j\cap
\{z=1\}\right)=(l_nr+r_n)Q_{jn}}.$$ 
Note that $Q_{jn}\supseteq Q_{j0},$ since $P_j$ contains 
${\underline 0}\in\R^d$ and $\frac{l_ns+s_n}{l_nr+r_n}\geq \frac{s}{r}$. Hence 

$$\lim_{n\to\infty}\frac{i(P_j, n, 
{\lfloor n\lambda\rfloor})}{n^{d-1}} = \lim_{n\to\infty}
\frac{i((l_nr+r_n)Q_{jn}, 1)}{(l_ns+s_n)^{d-1}}
 = \lim_{n\to\infty}\frac{i(Q_{jn}, l_nr+r_n)}{(l_ns+s_n)^{d-1}}$$
$$ \geq \lim_{n\to\infty}\frac{i(Q_{j0}, l_nr+r_n)}{(l_ns+s_n)^{d-1}}
= \left(\frac{r}{s}\right)^{d-1}Vol_{d-1}(Q_{j0})
 = \mbox{Vol}_{d-1}(P_j\cap\{z=\frac{r}{s}\}).$$

Now for each $m\in \N$, for $n\gg 0$, we have $({l_ns+s_n})/({l_nr+r_n})
\leq ({s})({r})+({1})({m})$. As before $Q_{jn}\subseteq 
(\frac{s}{r}+\frac{1}{m})P_j\cap\{z=1\}$. Hence
$$\lim_{n\to\infty}\frac{i(P_j, n, 
{\lfloor n\lambda\rfloor})}{n^{d-1}} = \lim_{n\to\infty}
\frac{i((l_nr+r_n)Q_{jn}, 1)}{(l_ns+s_n)^{d-1}}$$
$$\leq \lim_{n\to\infty}\frac{i((\frac{s}{r}+
\frac{1}{m})P_j\cap\{z=1\}, l_nr+r_n)}{(l_ns+s_n)^{d-1}}
=\left(\frac{r}{s}\right)^{d-1}Vol_{d-1}
\left(\left(\frac{s}{r}+\frac{1}{m}\right)P_j\cap\{z=1\}\right)$$
$$ = \left(\frac{r}{s}\right)^{d-1}\left(\frac{s}{r}+
\frac{1}{m}\right)^{d-1}Vol_{d-1}\left(P_j\cap\big\{z=
\frac{1}{(\frac{s}{r}+\frac{1}{m})}\big\}\right).$$
Letting $m\to \infty$, we see that 
$\lim_{n\to\infty}\frac{i(P_j, n, {\lfloor n\lambda\rfloor})}{n^{d-1}}\leq 
Vol_{d-1}(P_j\cap\{z=\frac{r}{s}\}).$ Thus 
for $\lambda\in\Q_{\geq 0}$, 
$$\lim_{n\to\infty}\frac{i(P_j, n, {\lfloor n\lambda\rfloor})}{n^{d-1}}= 
Vol_{d-1}(P_j\cap\{z=\lambda\}).$$ 
The claim follows easily from previous observations in the proof of the theorem.
\end{rmk}

\section{integral convex polytope and density function}
Let $P_D$ be a convex integral polytope in $\R^{d-1}$ of dimension $d-1$, without 
loss of generality we can assume one of the vertex is $\underline{0}\in \R^{d-1}$. 
\begin{notations}\label{n1}\begin{enumerate}
\item Let 
$\{v_1, \ldots, v_l\}\subset \R^{d-1}$  be the set of vertices of $P_D$.
\item Let $C_D\subseteq \R^d$ be the cone given by 
$\{(v_i, 1)\}$ and the origin $\underline{0}$ of $\R^d$. 
\item Let $\sP_D = C_D\setminus 
\cup_{u\in P_D\cap \Z^{d-1}}((u, 1)+C_D)$.
Similarly, for an integer $m\geq 1$, $\sP_{mD} = C_{mD}\setminus 
\cup_{u\in P_{mD}\cap \Z^{d-1}}((u, 1)+C_{mD})$.
\item 
Let $W_0\subset \R^{d-1}$ be the unit cell $[0,1]^{d-1}$. For a point $v\in \Z^{d-1}$, 
the set $W_v$ denotes the cell which is the translate of $W_0$ by $v$, that is 
$W_v = v+W_0$.
\item Let $l_D:=\mbox{min}\{t\in \mathbb{R}_{\geq 0}\ |\ B(0, t)\supset P_D\}$, where 
$B(0, t)$ is the closed ball of radius $t$ at $0$.
\end{enumerate}

\end{notations}

\begin{rmk}Note that $$C_{mD}\cap \{z=\lambda+1\} = P_{(1+\lambda)mD}\times 
\{z=1+\lambda\} = \{(\sum_ib_iv_i, 1+\lambda)\in\R^d \mid 
b_i\geq 0, \sum_ib_i = m+m\lambda\}.$$
\end{rmk}

\begin{lemma}\label{l3}For an integer $m\geq 1$, where $l$ is the number 
of vertices of $P_D$, we have 
\begin{enumerate}
\item $\sP_{mD}\cap\{z= 1+\lambda\} = \phi$,
for all $\lambda \geq l/m$.
\item In particular
\begin{enumerate}
\item $\sP_{mD}\cap\{z= 1+\lambda\} \subseteq P_{(m+l)D}\times \{z = 1+\lambda\}$, for $\lambda \geq 0$ and 
 
 \item $\sP_{mD}\cap\{z\in [1, \infty)\} \subseteq P_{(m+l)D}\times \{z\in [1, 1+l/m]\}$.
\end{enumerate}
\end{enumerate}
\end{lemma}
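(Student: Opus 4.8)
The plan is to establish part (1) — that $\sP_{mD}\cap\{z=1+\lambda\}=\phi$ for all $\lambda\geq l/m$ — by a direct argument showing that every point of $C_{mD}$ sitting at height $1+\lambda$ with $\lambda\geq l/m$ already lies in some translate $(u,1)+C_{mD}$ with $u\in P_{mD}\cap\Z^{d-1}$. Part (2) is then an immediate corollary: (a) says that a nonempty slice $\sP_{mD}\cap\{z=1+\lambda\}$ forces $\lambda<l/m$, and since $\sP_{mD}\subseteq C_{mD}$, the slice sits inside $C_{mD}\cap\{z=1+\lambda\}=P_{(1+\lambda)mD}\times\{z=1+\lambda\}\subseteq P_{(m+l)D}\times\{z=1+\lambda\}$ using $P_{(1+\lambda)mD}=(1+\lambda)mP_D\subseteq(m+l)P_D=P_{(m+l)D}$ (valid because $(1+\lambda)m\leq m+l$ and $P_D$ contains $\underline0$, so it is star-shaped at the origin); and (b) is just the union of (a) over $\lambda\in[0,l/m]$ together with part (1).

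So the heart of the matter is part (1). Fix a point $p=(\sum_i b_i v_i,\,1+\lambda)\in C_{mD}\cap\{z=1+\lambda\}$, where $b_i\geq 0$ and $\sum_i b_i=m(1+\lambda)$, and assume $\lambda\geq l/m$, i.e. $m\lambda\geq l$. The idea is to "peel off" an integral point of $P_{mD}$ from $p$. Write $\sum_i b_i=m+m\lambda$ and distribute this total mass: since there are only $l$ vertices $v_1,\dots,v_l$ and the total coefficient mass $m+m\lambda$ exceeds $m$ by at least $l$, by pigeonhole I can choose integers $c_i$ with $0\leq c_i\leq b_i$, $\sum_i c_i=m$, in fact arranging each $b_i-c_i\geq 0$ and $\sum_i(b_i-c_i)=m\lambda$; then $u:=\sum_i c_i v_i$ is a point of $mP_D$, but I need $u$ to be a \emph{lattice} point of $P_{mD}=mP_D$. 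The cleanest route: first reduce the $c_i$'s so that $u=\sum c_i v_i$ with $\sum c_i=m$ lies in $mP_D\cap\Z^{d-1}$ — here one uses that $mP_D$ is covered by the lattice cells $W_w$, $w\in\Z^{d-1}$, together with the fact that once $m\lambda\geq l$ there is enough slack to shift $u$ to a nearby lattice point $u'\in mP_D\cap\Z^{d-1}$ while keeping $p-(u',1)\in C_{mD}$. Concretely, $p-(u',1)=(\sum_i b_i v_i-u',\,\lambda m\cdot\frac1m \cdot \text{(stuff)})$ has last coordinate $\lambda$, and one checks $\sum_i b_i v_i-u'\in \lambda m\,P_D$, i.e. it is a nonnegative combination of the $v_i$ with total mass $m\lambda$ — this is where the bound $l_D$ (the radius of a ball containing $P_D$, Notations~\ref{n1}(5)) and $m\lambda\geq l\geq l_D$-type estimates enter, to absorb the rounding error incurred in replacing $u$ by the integral $u'$.

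The main obstacle I anticipate is precisely this integrality step: producing a \emph{lattice} point $u'\in P_{mD}\cap\Z^{d-1}$, not merely a rational point of $mP_D$, such that $p\in(u',1)+C_{mD}$. The naive pigeonhole gives a point $u$ of $mP_D$ with a controlled denominator but one must round it to the integer lattice, and rounding moves $u$ by a vector of norm $\leq\sqrt{d-1}$, which must be reabsorbed into the "remaining" cone $\lambda m\,P_D$; this works only once $\lambda m$ is large enough, and pinning down that the threshold can be taken to be exactly $l$ (the number of vertices) — rather than some messier function of $l_D$ and $d$ — will require the careful combinatorial bookkeeping of how the coefficient mass $m+m\lambda$ is split among the $l$ vertices. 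I expect the argument to run: allocate one full "unit of $v_i$" worth of slack per vertex so that $l$ units of excess mass suffice to guarantee an integral representative, which is exactly the statement $\lambda\geq l/m$.
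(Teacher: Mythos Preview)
Your outline for part~(2) is fine and matches the paper. For part~(1), the skeleton of your argument is also exactly the paper's: write a point at height $1+\lambda$ as $(\sum_i b_i v_i,\,1+\lambda)$ with $b_i\ge 0$ and $\sum_i b_i=m(1+\lambda)$, then peel off a lattice point of $P_{mD}$ by choosing nonnegative integers $c_i\le b_i$ with $\sum_i c_i=m$. The paper does this at the critical height $\lambda=l/m$ and then extends; you do it directly for all $\lambda\ge l/m$, which is equally clean.

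The obstacle you anticipate, however, is not real. You already have the right object: once the $c_i$ are \emph{integers}, the point $u=\sum_i c_i v_i$ is automatically a lattice point, because the vertices $v_1,\dots,v_l$ of $P_D$ are themselves lattice points ($P_D$ is a lattice polytope). There is no rounding step, no need for $l_D$ or $\sqrt{d-1}$, and no ``absorbing rounding error into $\lambda m\,P_D$''. All that remains is to check you \emph{can} choose such integers: since there are $l$ vertices, $\sum_i\{b_i\}<l$, so $\sum_i\lfloor b_i\rfloor>\sum_i b_i-l=m+m\lambda-l\ge m$; hence nonnegative integers $c_i\le\lfloor b_i\rfloor$ with $\sum_i c_i=m$ exist. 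Then $u=\sum_i c_i v_i\in mP_D\cap\Z^{d-1}$ and $p-(u,1)=(\sum_i(b_i-c_i)v_i,\,\lambda)$ has nonnegative coefficients summing to $m\lambda$, so it lies in $C_{mD}$. This is precisely the paper's computation (with $ma_i$ in place of your $b_i$), and it shows why the threshold is exactly $l/m$: the $l$ is the bound on the total fractional defect, not any metric quantity.
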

\begin{proof} We assume the following claim for the moment.

\vspace{5pt}

\noindent{\bf Claim}\quad $C_{mD}\cap \{z= 1+l/m\} \subseteq 
\bigcup_{u\in P_{mD}\cap \Z^{d-1}}((u,1)+C_{mD})$.

\vspace{5pt}

\noindent{(1)}\quad Note for  any $\lambda \geq l/m$ and for 
$(w', 1+\lambda)\in C_{mD}\cap \{z=1+\lambda\}$, there exists  $w_0\in P_{mD}$ 
such that  $w' = ((1+\lambda)w_0, 1+\lambda)$. Therefore we  can write
$$(w', 1+\lambda) = ((1+\frac{l}{m})w_0, 1+\frac{l}{m})+((\lambda-\frac{l}{m})w_0, 
(\lambda-\frac{l}{m})) \in 
((1+\frac{l}{m})w_0, 1+\frac{l}{m})+C_{mD}.$$ Hence by the above claim, 
$C_{mD}\cap \{z= 1+\lambda\} \subseteq 
\bigcup_{u\in P_{mD}\cap \Z^{d-1}}((u,1)+C_{mD})$.
Therefore $\sP_{mD}\cap\{z= 1+\lambda\} = \phi$,
for all $\lambda \geq l/m$. This proves the first assertion.

\noindent{(2)}\quad The above claim implies that $$\sP_{mD}\cap \{z\in [1, \infty)\}
= \sP_{mD}\cap \{z\in [1, 1+l/m]\}
\subseteq C_{mD}\cap \{z\in [1, 1+l/m]\}$$
$$\subseteq P_{(1+l/m)mD}\times \{z\in [1, 1+l/m]\}
= P_{(m+l)D}\times \{z\in [1, 1+l/m]\}.$$
Note that the last inequality holds as $\underline{0}\in P_D$ implies 
$P_{\lambda D} \subseteq P_{\lambda'D}$, if $\lambda'\geq \lambda$.
This  proves both the parts of the second assertion.

Now we give a

\vspace{5pt}
 
\noindent{\underline{Proof of the claim}}:\quad Let $(w, 1+l/m)\in C_{mD}\cap \{z= 1+l/m\}$.
Then $w= \sum_ia_imv_i$, where $a_i\geq 0$ in $\R$ and $\sum_ia_i = 1+l/m$.
We  write $ma_i = [ma_i]+\{ma_i\}$, where $[x]$ denote 
the integral part of a number $x$ and $\{x\} = x-[x]$ denotes the fractional part of 
$x$.
 Since $0\leq \sum_i\{ma_i\} <l$ and $\sum_i ma_i = m+l$, we have 
$m+l\geq \sum_i[ma_i] \geq m$.
Therefore we can choose nonnegative integers $c_1, \ldots, c_l$ such that 
$c_i \leq [ma_i]$ and $\sum_i([ma_i]-c_i) =m$. Now we can write 
$$(w, 1+\frac{l}{m}) = (\sum_i([ma_i]-c_i)v_i, 1) + (\sum_i\frac{(\{ma_i\}+c_i)}{m}
mv_i, \frac{l}{m})
\in (P_{mD}\cap \Z^{d-1},1)+ C_{mD}.$$
This proves the claim and hence the lemma.

\end{proof} 

\begin{lemma}\label{l2}Let $W_v\subset \R^{d-1}$ be a unit cell as given in 
Notations~\ref{n1}~(4) above.
Then, for any fixed integer $m\geq 1$, 

$$\mbox{Vol}_{d-1}[ (W_v\times\{z=\lambda+1\})\setminus
\bigcup_{u\in\mathbb{Z}^{d-1}}((u,1)+C_{mD})]=\mbox{Vol}_{d-1}
[(W_0\times \{z=m\lambda+1\})\setminus
\bigcup_{u\in\mathbb{Z}^{d-1}}((u,1)+C_{D})].$$
\end{lemma}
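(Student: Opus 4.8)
The statement to prove is Lemma~\ref{l2}: that the $(d-1)$-volume of the slice of the complement of $\bigcup_{u\in\Z^{d-1}}((u,1)+C_{mD})$ inside the cylinder $W_v\times\{z=\lambda+1\}$ equals the corresponding quantity for $m=1$ at the rescaled height $m\lambda+1$ inside the standard cell $W_0$. The natural approach is to exhibit an explicit volume-preserving affine bijection between the two sets being measured. First I would record the basic homogeneity of the construction: since $P_{mD}=mP_D$ and the vertices of $P_{mD}$ are $mv_1,\dots,mv_l$, the cone $C_{mD}$ is the image of $C_D$ under the linear map $A_m:\R^{d-1}\times\R\to\R^{d-1}\times\R$, $A_m(x,z)=(mx,z)$ restricted appropriately — more precisely $C_{mD}=\{(x,z): (x/m\,,\,z)\in C_D\}$ when $z\ge 0$, equivalently $C_{mD}\cap\{z\ge 0\}$ is obtained from $C_D$ by scaling the first $d-1$ coordinates by $m$. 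Hence $(u,1)+C_{mD}$ is the scaling-by-$m$-in-$x$ of $(u/m,1)+C_D$.

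**Key steps.** The map I would use is $\Phi_v(x,z)=\bigl(\tfrac1m(x-v)\,,\,1+m(z-1)\bigr)$, carrying the hyperplane $\{z=\lambda+1\}$ to $\{z=m\lambda+1\}$ and the cell $W_v=v+[0,1]^{d-1}$ to $W_0=[0,1]^{d-1}$. On the slice $\{z=\lambda+1\}$ this restricts to the affine map $x\mapsto \tfrac1m(x-v)$ on $\R^{d-1}$, whose Jacobian determinant is $m^{-(d-1)}$, so it scales $(d-1)$-volume by $m^{-(d-1)}$. The main content is then to check that $\Phi_v$ maps the complement set $(W_v\times\{z=\lambda+1\})\setminus\bigcup_{u\in\Z^{d-1}}((u,1)+C_{mD})$ bijectively onto $(W_0\times\{z=m\lambda+1\})\setminus\bigcup_{u\in\Z^{d-1}}((u,1)+C_D)$. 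For this it suffices to show that a point $p=(x,\lambda+1)$ lies in some $(u,1)+C_{mD}$ with $u\in\Z^{d-1}$ if and only if $\Phi_v(p)$ lies in some $(u',1)+C_D$ with $u'\in\Z^{d-1}$. Unwinding the scaling: $p\in (u,1)+C_{mD}$ means $p-(u,1)\in C_{mD}$, i.e. $\bigl(\tfrac1m(x-u),\lambda\bigr)\in C_D$ (using the homogeneity above and that the $z$-coordinate of $p-(u,1)$ is $\lambda$). On the other hand $\Phi_v(p)\in(u',1)+C_D$ means $\bigl(\tfrac1m(x-v)-u'\,,\,m\lambda\bigr)\in C_D$, and since $C_D$ is a cone this is equivalent to $\bigl(\tfrac1{m^2}(x-v)-\tfrac1m u'\,,\,\lambda\bigr)\in C_D$. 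So I need: there exists $u\in\Z^{d-1}$ with $\tfrac1m(x-u)$ giving a point of $C_D$ at height $\lambda$ iff there exists $u'\in\Z^{d-1}$ with $\tfrac1{m^2}(x-v)-\tfrac1m u'$ doing so. After clearing, the first condition (after the cone-rescaling trick, multiplying the spatial part suitably) should match the second with the correspondence $u \leftrightarrow$ an integer vector built from $v$ and $u'$; the point is that $\tfrac{1}{m}\Z^{d-1}$ is being compared against $\Z^{d-1}$ but the extra factor of $m$ coming from $P_{mD}=mP_D$ exactly absorbs the discrepancy. I would make the bookkeeping clean by restating the complement as $\{p\in\R^d : p\notin (u,1)+C_{mD}\ \forall u\in\Z^{d-1}\}$ and tracking the defining inequalities of $C_D$ (the halfspaces $\langle\cdot,v_i\rangle\ge -a_i$ type constraints from (\ref{*})) directly through $\Phi_v$.

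**Main obstacle.** The delicate point is the matching of the \emph{integer} translation lattices on the two sides: on the left one translates by $(u,1)$, $u\in\Z^{d-1}$, against the cone $C_{mD}$ built from $mP_D$, whereas on the right one translates by $(u',1)$ against $C_D$ built from $P_D$. The factor $m$ appears in two different roles — as the dilation of the polytope and as the height rescaling $z\mapsto 1+m(z-1)$ — and one must verify these conspire so that the integrality constraint $u\in\Z^{d-1}$ is preserved under the correspondence rather than being relaxed to $u\in\tfrac1m\Z^{d-1}$ or tightened. I expect this comes down to the observation that scaling the first coordinates by $m$ sends $\Z^{d-1}$ into $m\Z^{d-1}$, but simultaneously the ``$+1$ in the last coordinate'' translations, after composing with the cone's own dilation invariance in the $z$-direction, restore the correspondence; concretely $(u,1)+C_{mD}$ at height $1+\lambda$ equals the $m$-fold $x$-scaling of $(u/m,1)+C_D$ at height $1+\lambda$, and then rescaling the height from $1+\lambda$ down to $1+m\lambda$ is absorbed into the cone. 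Once the set-theoretic identity $\Phi_v\bigl((W_v\times\{z=\lambda+1\})\cap\text{(complement)}\bigr)=(W_0\times\{z=m\lambda+1\})\cap\text{(complement)}$ is established, the volume statement follows immediately from the Jacobian computation, since by Lemma~\ref{l1}(2) these complement-slices are (finite unions of) full $(d-1)$-dimensional polytopes up to measure zero, so their volumes are well behaved under the affine change of variables. I would therefore spend the bulk of the proof carefully writing out the equivalence of membership conditions, and dispatch the volume scaling in one line.
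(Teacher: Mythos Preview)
Your proposed map $\Phi_v(x,z)=\bigl(\tfrac1m(x-v),\,1+m(z-1)\bigr)$ does \emph{not} carry $W_v$ onto $W_0$: it carries $W_v=v+[0,1]^{d-1}$ onto $[0,\tfrac1m]^{d-1}$, a strictly smaller cube. You even note that the Jacobian on the slice is $m^{-(d-1)}$, so volumes get scaled by $m^{-(d-1)}$; but the lemma asserts an \emph{equality} of volumes, so a non-isometric map cannot possibly give the conclusion without an extra factor of $m^{d-1}$ appearing somewhere (and none does). The lattice-matching worries you raise in your ``main obstacle'' paragraph are a symptom of this: you are trying to reconcile a spurious $\tfrac1m$ in the spatial coordinate.

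The fix is to drop the spatial scaling entirely. The map the paper uses is simply
\[
\psi(x,\lambda+1)=(x-v,\,m\lambda+1),
\]
a pure translation in the first $d-1$ coordinates, hence an \emph{isometry} on the slice. The reason no scaling is needed is exactly the homogeneity you identified but then misapplied: the slice of $(u,1)+C_{mD}$ at height $\lambda+1$ is $u+\lambda(mP_D)=u+m\lambda P_D$, while the slice of $(u',1)+C_D$ at height $m\lambda+1$ is $u'+m\lambda P_D$. These are \emph{the same set up to integer translation}, so one checks directly that $(x-v,m\lambda+1)\in(u,1)+C_D$ if and only if $(x,\lambda+1)\in(u+v,1)+C_{mD}$; since $v\in\Z^{d-1}$, the correspondence $u\leftrightarrow u+v$ is a bijection of $\Z^{d-1}$, and $\psi$ restricts to a volume-preserving bijection between the two complement sets. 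That is the whole proof.
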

\begin{proof}
Define $$\psi : (W_v\times \{z=\lambda+1\})\setminus
\bigcup_{u\in\mathbb{Z}^{d-1}}((u,1)+C_{mD})\longrightarrow 
(W_0\times\{z=m\lambda+1\})\setminus\bigcup_{u\in\mathbb{Z}^{d-1}}((u,1)+C_{D})$$ 
by $(x, \lambda+1)\mapsto (x -v, m\lambda+1)$, for $x\in W$.
 Note that $(x-v, m\lambda+1)\in (u,1)+C_D$ if and only if  $(x, \lambda+1)\in 
(u+v,1)+C_{mD}$. Thus $\psi$ is a well defined isometry. 
\end{proof}

\begin{defn}\label{d1}Let $W_v$ 
be a  $d-1$ dimensional unit cell, for some $v\in \Z^{d-1}$.
We define 
 the sets, for $0 \leq \lambda$,  
$$\Phi^v_{mD}(\lambda) = (W_v\times 
\{z=\lambda+1\})\setminus\bigcup_{u\in P_{mD}\cap\mathbb{Z}^{d-1}}((u,1)+C_{mD}).$$
and 
$$\Psi^v_{mD}(\lambda) = (W_v\times 
\{z=\lambda+1\})\setminus\bigcup_{u\in \mathbb{Z}^{d-1}}((u,1)+C_{mD}).$$
Let 
$\varphi^v_{mD}(\lambda):[0, \infty)\longrightarrow [0, 1]$ be the function
$\varphi^v_{mD}(\lambda) = \mbox{Vol}_{d-1}(\Phi^v_{mD}(\lambda))$
and similarly let 
$\varphi_{mD}(\lambda):[0, \infty)\longrightarrow [0, 1]$ be the function 
$\varphi_{mD}(\lambda) = \text{Vol}_{d-1}(\Psi^v_{mD}(\lambda))$.
\end{defn}

\begin{rmk}\label{***}
\begin{enumerate}
\item By Lemma~\ref{l2}, the $\varphi_{mD}$ is well defined 
(independent of choice of $v$ in $\Z^{d-1}$). Also
\item  
by Lemma~\ref{l2}, $\varphi_{mD}(\lambda) = \varphi_D(m\lambda)$.
\item 
By definition it follows that $\varphi_{mD}(\lambda)\leq \varphi^v_{mD}(\lambda) \leq 1$.
\end{enumerate}
\end{rmk}

\begin{defn}\label{d3}
\begin{enumerate}
\item Let $r\in \R_{\geq 1}$ such that $P_{rD}$ contains a unit cell $W_v$, for 
some $v$.
\item
 For a unit cell $W_v$,
  let 
$$l(W_v)=\{u\in \mathbb{Z}^{d-1} \ | \ 
{\tilde d}(u, w)\leq (lr)l_D +\sqrt{d-1},~~~\mbox{for all}~~w\in W_v \},$$ 
where 
${\tilde d}$ denotes the Euclidean metric on $\mathbb{R}^{d-1}$ and $l$ denotes 
the number of vertices of $P_D$. 
\end{enumerate}
\end{defn}

\begin{lemma}\label{l4} For any given integer  $m\geq 1$ and for $\lambda\geq 0$, 
we have  $\Psi^v_{mD}(\lambda) = A^v_m(\lambda)$, where
$$A^v_m(\lambda) = (W_v\times\{z=\lambda+1\})\setminus
\bigcup_{u\in l(W_v)}((u,1)+C_{mD}).$$
Moreover, for $\lambda \geq l r/m$,
 $$\Psi^v_{mD}(\lambda) = A^v_m(\lambda) = \emptyset.$$  
In particular  $\varphi_{mD}$ is a compactly supported continuous 
function.
\end{lemma}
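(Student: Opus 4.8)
The plan is to reduce the lemma to a quantitative statement about which integer translates $(u,1)+C_{mD}$ can meet a given horizontal slice of the cylinder $W_v\times\R_{\ge 1}$, and then to check that these ``relevant'' $u$ always lie in the finite set $l(W_v)$ of Definition~\ref{d3}. The dictionary is elementary: since $P_{mD}=mP_D$ and $C_{mD}=\mbox{Cone}(P_{mD}\times\{1\})$, for $\lambda\ge 0$ and $u\in\Z^{d-1}$ one has $(x,\lambda+1)\in(u,1)+C_{mD}$ if and only if $x-u\in\lambda m\,P_D$. Thus $\Psi^v_{mD}(\lambda)$ is the set of $x\in W_v$ with $x-u\notin\lambda m\,P_D$ for \emph{every} $u\in\Z^{d-1}$, and $A^v_m(\lambda)$ is the same with $\Z^{d-1}$ replaced by $l(W_v)$. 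Since $l(W_v)\subseteq\Z^{d-1}$, the inclusion $\Psi^v_{mD}(\lambda)\subseteq A^v_m(\lambda)$ is automatic, so everything comes down to the reverse inclusion: whenever some $u\in\Z^{d-1}$ satisfies $x-u\in\lambda m\,P_D$, some $u'\in l(W_v)$ does too.

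I would prove this in two overlapping regimes. \emph{If $\lambda m\le lr$:} then $\lambda m\,P_D\subseteq (lr)P_D\subseteq B(0,(lr)l_D)$ (using $\underline 0\in P_D$), so any $u$ with $x-u\in\lambda m\,P_D$ has $\tilde d(x,u)\le (lr)l_D$, and hence $\tilde d(w,u)\le (lr)l_D+\sqrt{d-1}$ for all $w\in W_v$ (the diameter of a unit cell being $\sqrt{d-1}$); thus $u\in l(W_v)$ and $u'=u$ works. \emph{If $\lambda m\ge r$:} I claim in fact $A^v_m(\lambda)=\emptyset$, which settles the equality \emph{and} the ``moreover'' part. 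By Definition~\ref{d3}(1), $P_{rD}=rP_D$ contains a unit cell $W_{v_0}$; translating by the integer vector $v-v_0$ gives $W_v=(v-v_0)+W_{v_0}\subseteq (v-v_0)+rP_D$. Hence for every $x\in W_v$ the lattice point $u':=v-v_0$ satisfies $x-u'\in rP_D\subseteq\lambda m\,P_D$, i.e. $(x,\lambda+1)\in(u',1)+C_{mD}$; and $u'\in l(W_v)$, since for $w=v+t\in W_v$ with $t\in[0,1]^{d-1}$ we get $\tilde d(w,u')=|t+v_0|\le\sqrt{d-1}+|v_0|\le\sqrt{d-1}+rl_D\le\sqrt{d-1}+(lr)l_D$, using $v_0\in W_{v_0}\subseteq rP_D\subseteq B(0,rl_D)$ and $r\le lr$. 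Since $r\le lr$, these two regimes cover all $\lambda\ge 0$, giving $\Psi^v_{mD}(\lambda)=A^v_m(\lambda)$ throughout and $\Psi^v_{mD}(\lambda)=A^v_m(\lambda)=\emptyset$ for $\lambda\ge lr/m$.

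It remains to deduce that $\varphi_{mD}$ is a compactly supported continuous function. Compact support is immediate from the vanishing just established. For continuity, the point of the lemma is that $A^v_m(\lambda)$ is the slice at height $z=\lambda+1$ of the \emph{bounded} polyhedral region $\bigl(W_v\times[1,1+lr/m]\bigr)\setminus\bigcup_{u\in l(W_v)}\bigl((u,1)+C_{mD}\bigr)$; as in Lemma~\ref{l1} this region is a finite union of rational convex polytopes, and since every facet of $C_{mD}$ passes through $\underline 0$, none of the cones $(u,1)+C_{mD}$ has a facet in a horizontal hyperplane $\{z=c\}$. Hence Theorem~\ref{thvol}, applied to each polytope of the decomposition together with the inclusion--exclusion used in the proof of Theorem~\ref{t2}, shows $\varphi_{mD}$ is piecewise polynomial on $(0,lr/m)$; continuity at $\lambda=0$ (where the removed cones meet $\{z=1\}$ only in their apices, so $\varphi_{mD}(0)=\mbox{Vol}_{d-1}(W_v)=1$) and at $\lambda=lr/m$ (where $\varphi_{mD}=0$) is checked as the endpoint continuity in Theorem~\ref{thvol}.

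The main obstacle is the regime $\lambda m\ge r$: one needs a covering lattice point $u'$ for the slice whose distance to $W_v$ is bounded \emph{independently of $m$ and $\lambda$}. Searching for $u'$ directly inside $\lambda m\,P_D$ near the vertex $\underline 0$ fails --- because $\underline 0$ is a vertex of $P_D$, the body $\lambda m\,P_D$ stays ``thin'' near the origin however large $\lambda m$ is --- so one instead uses that $P_{rD}$ already contains an entire unit cell and that an integer translate of $P_{rD}$ therefore covers $W_v$. The explicit constant $(lr)l_D+\sqrt{d-1}$ in Definition~\ref{d3} is tailored precisely so that the witness $u$ in both regimes lands in $l(W_v)$; the remaining ingredients --- the description of $A^v_m(\lambda)$ as a finite union of polytopes and the endpoint continuity --- are routine given Section~3.
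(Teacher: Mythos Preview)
Your proof is correct and follows the same two-regime strategy as the paper. The only noteworthy difference is in the large-$\lambda$ regime: the paper first shows $\Psi^v_{mD}(\lambda)=\emptyset$ using some $v'\in\Z^{d-1}$ without checking $v'\in l(W_v)$, then invokes the first-regime equality at the single value $\lambda=lr/m$ to get $A^v_m(lr/m)=\Psi^v_{mD}(lr/m)=\emptyset$, and finally uses a monotonicity step (adding $(\underline 0,\lambda-lr/m)\in C_{mD}$) to conclude $A^v_m(\lambda)=\emptyset$ for $\lambda>lr/m$. Your direct verification that the witness $u'=v-v_0$ already lies in $l(W_v)$ collapses these three steps into one and in fact yields the slightly stronger vanishing $A^v_m(\lambda)=\emptyset$ for $\lambda\ge r/m$. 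For the continuity statement the paper simply writes ``since $l(W_v)$ is a finite set, the function $\varphi_{mD}$ is continuous''; your more explicit argument via the polytope decomposition and Theorem~\ref{thvol} supplies the details the paper leaves implicit.
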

\begin{proof}Let $0\leq \lambda \leq lr/m$. 
Note that $\Psi^v_{mD}(\lambda) \subseteq  A^v_m(\lambda)$, for every $\lambda$. 
Therefore, it is enough to show that 
$$(W_v\times\{z=\lambda+1\})\bigcap\bigcup_{u\in\Z^{d-1}}((u,1)+C_{mD})
\subseteq(W_v\times\{z=\lambda+1\})\bigcap\bigcup_{u\in l(W_v)}((u,1)+C_{mD}).$$
Suppose, there is  $x=(u, 1)+\sum b_i(mv_i, 1)\in W_v\times\{z=\lambda+1\}$, 
for some $u\in \Z^{d-1}\setminus l(W_v)$, and $x=(w', \lambda+1)$, for some $w'\in W_v$
 and
$\sum b_i=\lambda$. Therefore, there exists $w\in W_v$, such that 
$$lr\cdot l_D+\sqrt{d-1} \ <  {\tilde d}(u,w)\leq 
{\tilde d}(u, w')+ {\tilde d}(w', w)\leq {\tilde d}(0,
\sum b_imv_i)+ \sqrt{d-1},$$ 
as  ${\tilde d}(w,w') \leq \sqrt{d-1}$, for any $w, w'\in W_v$ and 
${\tilde d}(u, w') =  {\tilde d}(u, u+\sum_ib_imv_i) = {\tilde d}(0, \sum b_imv_i)$. 
This implies,
$l_D < {\tilde d}(0,
\sum (b_im/lr)v_i)$. Therefore,
 by the definition of $l_D$ (as $\underline{0}\in P_D$),
$$\sum (b_im/lr)v_i\notin P_D  \implies \sum b_im/lr > 1\implies
\lambda > lr/m,$$  which is
a contradiction. Hence $\Psi^v_{mD}(\lambda) = A_m^v(\lambda)$, for $\lambda \leq lr/m$.

\vspace{5pt}
\noindent{Let} $\lambda \geq  lr/m$.

\noindent{\bf Claim}\quad $\Psi^v_{mD}(\lambda) = \emptyset $.

\noindent{\underline{Proof of the claim}}:\quad Let $x = (w, \lambda+1)
\in \Psi^v_{mD}(\lambda)$. Then 
 there is $v'\in \Z^{d-1}$ such that $w-v'\in P_{rD}$ (as 
$P_{rD}$ contains a unit cell).
We can write  $(w,\lambda+1) = (w-v', \lambda)+(v',1)$. 
Now $w-v'\in P_{rD}\subseteq P_{\lambda mD}$ (as $r/m\leq rl/m \leq \lambda$).
This implies $(w-v', \lambda)\in P_{\lambda mD}\times \{z= \lambda\}\subseteq C_{mD}$.
Hence $(w, \lambda+1)\in (v',1)+C_{mD}$, where $v'\in \Z^{d-1}$. This implies 
$\Psi^v_{mD}(\lambda) = \emptyset $, for $\lambda \geq rl/m$.

Now we have  $A_m^v(lr/m) = \Psi^v_{mD}(lr/m) = \emptyset$.
Let $x= (w, \lambda+1)\in A_m^v(\lambda)$, for some 
$\lambda \geq lr/m$. Then $(w, 1+lr/m)\in (u,1)+C_{mD}$, for some $u\in l(W_v)$. 
Therefore 
$$(w, 1+\lambda) = (w, 1+lr/m) + (\underline{0}, \lambda-lr/m) \in (u,1)+C_{mD},
~~\mbox{where}~~u\in l(W_v)$$ and
$\{\underline{0}\times \R_{\geq 0}\}\subseteq C_{mD}$.
Hence $A_m^v(\lambda) = \phi$, for 
$\lambda\geq lr/m$. 

Since $l(W_v)$ is a finite set, the function $\varphi_{mD}$ is 
continuous. This completes the proof of the lemma.

\end{proof}

\begin{rmk}\label{r3}
Morever if we write $P_{kD}\cap \Z^{d-1} = P'_{kD}\cup P''_{kD}$ 
such that  ($l(W_v)$ is given as in Definition~\ref{d3})
$$P_{kD}'=\{v\in P_{kD}\cap \mathbb{Z}^{d-1} \ \arrowvert\ l(W_v)\subset P_{kD}\}$$ 
 and 
$$P_{kD}''=\{v\in P_{kD}\cap \mathbb{Z}^{d-1}\ \arrowvert \ l(W_v) \nsubseteq P_{kD}\}$$
then  
$$v\in P'_{kD}
 \implies
 l(W_v)\subseteq P_{kD}\cap \Z^{d-1}\subseteq \Z^{d-1} \implies 
\varphi^v_{kD} = \varphi_{kD},~~\mbox{by}~~Lemma~\ref{l4}.$$
\end{rmk}

\begin{notations} For any given two closed sets $Q'$ and $Q''$ in 
$\R^{d-1}$, let 
 $d(Q', Q'') = \min\{d(x, y)\mid 
x\in Q', y\in Q''\}$ denote the distance between the sets
$Q'$ and $Q''$. \end{notations}

\begin{lemma}\label{ln}Let $Q$ be a convex $d-1$-dimensional rational polytope 
in $\R^{d-1}$  such that it contains the origin in its interior.
Then, there is a constant $\delta_0 > 0$, depending on $Q$, such that for every rational 
$m {\geq 1}$ 
and for every integer $l\geq 0$, we have
$$d(\partial(mQ),  \partial((m+l)Q) \geq l\delta_0,$$
where $\partial(Q)$ denotes  the  boundary of $Q$ in $\R^{d-1}$.
\end{lemma}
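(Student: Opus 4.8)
The plan is to reduce the statement to a uniform lower bound on how far the boundary of a dilate moves per unit of dilation, exploiting that $Q$ is a polytope with $\underline{0}$ in its interior, so that $\partial Q$ is a finite union of facets each of which lies in a hyperplane at positive distance from the origin. First I would fix a facet $F$ of $Q$, lying in the affine hyperplane $H_F=\{x\in\R^{d-1}\mid \langle x,n_F\rangle = c_F\}$ with unit normal $n_F$ and $c_F>0$ (positivity because $\underline0\in\mathrm{int}(Q)$). Dilating by $t>0$ carries $F$ into the parallel hyperplane $\{\langle x,n_F\rangle = t c_F\}$, so the hyperplane of the corresponding facet of $(m+l)Q$ is at distance exactly $l c_F$ from the hyperplane of the facet of $mQ$. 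The key point is then that a point $x\in\partial((m+l)Q)$ lies on some facet hyperplane $\{\langle\cdot,n_F\rangle=(m+l)c_F\}$, while any point $y\in\partial(mQ)$ satisfies $\langle y,n_G\rangle\le m c_G$ for \emph{every} facet $G$ (since $mQ=\bigcap_G\{\langle\cdot,n_G\rangle\le m c_G\}$), and in particular $\langle y,n_F\rangle\le m c_F$; hence $\langle x-y,n_F\rangle\ge l c_F$, which forces $\|x-y\|\ge l c_F$.

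Carrying this out: I would set $\delta_0=\min_F c_F$, the minimum taken over the finitely many facets $F$ of $Q$; this is a strictly positive constant depending only on $Q$ (and it is the distance from the origin to the affine hull of $F$, hence rational since $Q$ is a rational polytope, though rationality is not needed). Given a rational $m\ge 1$, an integer $l\ge 0$, and arbitrary $x\in\partial((m+l)Q)$, $y\in\partial(mQ)$, pick a facet $F$ with $x\in H_F^{(m+l)}:=\{\langle\cdot,n_F\rangle=(m+l)c_F\}$. Then $\langle x,n_F\rangle=(m+l)c_F$ and $\langle y,n_F\rangle\le m c_F$ because $y\in mQ$, so
$$l\delta_0\le l c_F=\langle x,n_F\rangle-m c_F\le\langle x-y,n_F\rangle\le\|x-y\|,$$
using the Cauchy--Schwarz inequality and $\|n_F\|=1$. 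Taking the minimum over all admissible $x,y$ gives $d(\partial((m+l)Q),\partial(mQ))\ge l\delta_0$, as required. (The case $l=0$ is trivial, and for $l\ge 1$ the bound is non-vacuous.)

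The only mild subtlety — and the place to be careful rather than the place of real difficulty — is the elementary convex-geometry input that for a full-dimensional polytope $Q$ with $\underline0$ in its interior one has the irredundant facet presentation $Q=\bigcap_F\{\langle\cdot,n_F\rangle\le c_F\}$ with all $c_F>0$, and that $tQ=\bigcap_F\{\langle\cdot,n_F\rangle\le t c_F\}$ for $t>0$, so that $\partial(tQ)$ is contained in $\bigcup_F\{\langle\cdot,n_F\rangle=t c_F\}$ and every point of $tQ$ satisfies all the inequalities with parameter $t$. These are standard facts about polyhedra, so no genuine obstacle arises; the argument is essentially a one-line separating-hyperplane estimate once the normalization is set up. I would state the facet presentation explicitly at the start of the proof and then proceed directly to the displayed inequality above.
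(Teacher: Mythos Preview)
Your proof is correct and follows essentially the same approach as the paper: both take $\delta_0$ to be the minimum distance from the origin to the supporting hyperplanes of the facets of $Q$, and both bound $\|x-y\|$ below by the gap $l\delta_0$ between the parallel hyperplanes $mH_F$ and $(m+l)H_F$. Your argument is in fact slightly more streamlined than the paper's, since by using the inequality $\langle y,n_F\rangle\le m c_F$ (valid for \emph{every} $y\in mQ$ and every facet $F$) you avoid the paper's case split according to whether the minimizing points lie on corresponding facets or not; the paper's Case~2 is exactly your inequality in disguise, and its Case~1 is the special situation where it is tight.
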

\begin{proof} 
Let $\{F_i\}_i$ be the set of facets of $Q$; then  $\{mF_i\}_i$ is the 
set of  facets of 
$mQ$, for any rational number $m\geq 1$. Moreover, if $F_i = H_i\cap Q$ then 
$mF_i = mH_i\cap mQ$, where $H_i$ denotes the supporting hyperplane of $Q$ at $F_i$.
Now, since $\partial(mQ)$ and $\partial((m+l)Q)$ are compact closed sets, there exist
$x_0\in \partial(mQ)$ and $y_0\in \partial((m+l)Q)$ such that 
$d(\partial(mQ),  \partial((m+l)Q) = d(x_0, y_0)$.

\vspace{5pt}

\noindent{\bf Claim}\quad $d(x_0, y_0)\geq \min\{d(mH_i, (m+l)H_i)
\mid H_i \in~\{\mbox{Supporting hyperplanes of}~~Q\}\}.$ 

\vspace{5pt}

\noindent\underline{Proof of the claim}:

\noindent{\underline{Case}}~1\quad If $x_0\in mF_i$ and $y_0\in (m+l)F_i$, for some 
facet $F_i$ 
of $Q$ then
$d(x_0, y_0) \geq  d(mH_i, (m+l)H_i)$, as
$mF_i\subset mH_i$ and $(m+l)F_i\subset (m+l)H_i$, where $mH_i$ and $(m+l)H_i$ 
are parallel hyperplanes.

\noindent{\underline{Case}}~2\quad Suppose $x_0\in mF_i$ and $y_0\in (m+l)F_j$,
where $i\neq j$, then 
$$d(x_0, y_0) \geq  d(mF_i, (m+l)F_j)\geq d(mQ, (m+l)Q) = d(x_0, y_0).$$ 
As $mH_j$ is a supporting hyperplane for $mQ$, the entire polytope  
$mQ$ lies in one side of $mH_j$, say, $mQ \subset (mH_j)^+$, which implies  
$mF_i\subset (mH_j)^+$.
On the other hand 
$(m+l)F_j \subset (m+1)H_j$. 
Hence $$d(x_0, y_0) = d(mF_i, (m+l)F_j)\geq d((mH_j)^+, (m+l)H_j) =  
d(mH_j, (m+l)H_j).$$
This proves the claim. 

Let $\delta_i = d(H_i, (\underline{0}))$; then $\delta_i = \|x\|$, for some 
$x\in H_i$. Now  it is easy to check that $d(mH_i, (\underline{0})) = \|mx\| = m\|x\|$
and  $d(mH_i, (m+l)H_i) = d(mx, (m+l)x) =  l\delta_i$, where $\delta_i >0$ as 
$H_i$ does not pass through the origin. 
Since, there are only finitely many facets and hence finitely many $H_i$, 
$\delta_0 = \min\{\delta_i\} >0$.
This proves  the lemma. 
\end{proof}

\begin{lemma}\label{sl1} Let 
$$S_1=\{v\in \Z^{d-1}\setminus P_{kD}\cap \Z^{d-1}\mid W_v\cap P_{kD}\neq \emptyset,~~
W_v\cap (P_{kD})^c\neq \emptyset\}$$
and let $P_{kD}''$ be as in Remark~\ref{r3}. Then 
$\#|P''_{kD}| = O(k^{d-2})$ and $\#|S_1| = O(k^{d-2}).$
\end{lemma}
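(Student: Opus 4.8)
The plan is to show that both $P''_{kD}$ and $S_1$ consist of lattice unit cells that meet a "thin shell" near the boundary $\partial(P_{kD})$, and that the number of lattice cells in such a shell of bounded width around a dilated rational polytope grows like $k^{d-2}$. First I would fix notation: recall from Definition~\ref{d3} that $l(W_v)$ is contained in the ball $B(v, (lr)l_D + \sqrt{d-1})$, so there is a constant $\rho := (lr)l_D + 2\sqrt{d-1}$, independent of $k$, such that if $v \in P''_{kD}$ then some lattice point $u \in l(W_v)$ lies outside $P_{kD}$ while $v \in P_{kD}$; hence the segment from $v$ to $u$ crosses $\partial(P_{kD})$ and $\tilde d(v, \partial(P_{kD})) \leq \rho$. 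Likewise, if $v \in S_1$ then $W_v$ meets both $P_{kD}$ and its complement, so again $\tilde d(v, \partial(P_{kD})) \leq \sqrt{d-1} \leq \rho$. Thus in both cases the point $v$ lies in the set
$$N_\rho := \{x \in \R^{d-1} \mid \tilde d(x, \partial(P_{kD})) \leq \rho\},$$
and since $v$ ranges over lattice points and distinct lattice points are $\geq 1$ apart, it suffices to bound $\#\big(N_\rho \cap \Z^{d-1}\big)$ — indeed $\#|P''_{kD}| + \#|S_1| \leq 2\cdot\#(N_\rho\cap\Z^{d-1})$ up to harmless overcounting.

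Next I would estimate $\#(N_\rho \cap \Z^{d-1})$. Since $P_D$ is a fixed $(d-1)$-dimensional polytope with $\underline 0$ as a vertex, I would first translate so that the origin is an interior point of a fixed polytope $Q$ (e.g. replace $P_D$ by $P_D - c$ for an interior rational point $c$; this changes $P_{kD}$ only by the bounded translation $kc$, which shifts the cells counted by a bounded-in-$k$ amount and does not affect the asymptotic order). Then $\partial(P_{kD})$ is the union of the facets $kF_i = kH_i \cap kQ$. For each supporting hyperplane $H_i$, the slab $\{x \mid \tilde d(x, kH_i) \leq \rho\}$ intersected with a box of side $O(k)$ containing $P_{kD}$ contains $O(k^{d-2})$ lattice points (a slab of bounded width around a hyperplane meets $O(k^{d-2})$ lattice points of a box of side $k$, by a standard volume/counting estimate). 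Summing over the finitely many facets $F_i$ gives $\#(N_\rho \cap \Z^{d-1}) = O(k^{d-2})$, and combining with the previous paragraph completes the proof.

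The only subtle point — and the step I expect to need the most care — is justifying that every cell $W_v$ counted really does sit within bounded distance of $\partial(P_{kD})$ \emph{uniformly in} $k$; for $S_1$ this is immediate from the diameter of a unit cell, but for $P''_{kD}$ one must use precisely that the "radius" $(lr)l_D + \sqrt{d-1}$ appearing in Definition~\ref{d3} of $l(W_v)$ does not depend on $k$ (only on the fixed data $r, l, l_D$ of the polytope $P_D$), which is exactly what makes $\rho$ a constant. Lemma~\ref{ln} is not strictly needed here but gives an alternative route: it shows $\partial(P_{kD})$ and $\partial(P_{(k+l)D})$ are $\gtrsim \delta_0$ apart, which could be used to locate the boundary-crossing more precisely; I would mention it only if the direct slab count needs reinforcement. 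The rest is the routine lattice-point-in-a-slab estimate, which I would not write out in full.
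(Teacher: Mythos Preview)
Your argument is correct, but it takes a different route from the paper's proof. The paper does not count lattice points in slabs directly; instead it uses Lemma~\ref{ln} as the main tool. Concretely, the paper passes to $Q = P_{rD} - v$ (with $v$ a lattice point so that $\underline{0}$ is interior to $Q$), picks an integer $l_1$ with $l_1\delta_0 \geq \tilde l := (lr)l_D + \sqrt{d-1}$, and then Lemma~\ref{ln} gives the nested chain
\[
P_{(k-l_1 r)D} + l_1 v \ \subseteq\ P_{kD}\ \subseteq\ P_{(k+l_1 r)D} - l_1 v
\]
with successive boundaries at distance $\geq \tilde l$. This traps $P''_{kD}$ inside $P_{kD}\setminus(P_{(k-l_1 r)D}+l_1 v)$ and $S_1$ inside $(P_{(k+l_1 r)D}-l_1 v)\setminus P_{kD}$. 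Since $l_1 v$ is a \emph{lattice} vector, the cardinalities of these differences are exactly $\#|P_{kD}\cap\Z^{d-1}| - \#|P_{(k-l_1 r)D}\cap\Z^{d-1}|$ etc., which is $O(k^{d-2})$ by Ehrhart-type asymptotics. So the paper reduces everything to a difference of two lattice-point counts in dilated integral polytopes, whereas you bound lattice points in a union of hyperplane slabs directly.

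Both approaches hinge on the same observation you flagged as the ``subtle point'': the radius $(lr)l_D + \sqrt{d-1}$ in $l(W_v)$ is independent of $k$. Your slab count is arguably more elementary (it avoids Lemma~\ref{ln} and Ehrhart counting), but one small wrinkle: your translation step is both unnecessary and mis-stated --- the shift $kc$ has magnitude $O(k)$, not bounded, and putting the origin in the interior is not needed for the slab estimate (each facet of $P_{kD}$ already lies in a fixed hyperplane dilated by $k$, and the slab-in-a-box count works regardless of where the origin sits). The paper's translation, by contrast, is by the lattice vector $l_1 v$, precisely so that lattice-point counts are preserved exactly.
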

\begin{proof}Let $l$ be the number of vertices of $P_D$ and let $r\geq 1$ be 
an integer such that the interior of $P_{rD}$ contains a unit cell $W_{v}$ for 
some $v\in \Z^{d-1}$.
   Let $l_D$ be as in Notations~\ref{n1}. 

Then  $P_{rD}$ contains the  lattice point $v$ in its  interior.
Let $Q= P_{rD}-v$ then $Q$ is a convex (integral) $d-1$ dimensional polytope in 
$\R^{d-1}$ such that the origin is  in the interior of $Q$. 
Let $\delta_0 >0$ be  a constant for $Q$, as given in 
Lemma~\ref{ln}. 

For  ${\tilde l} = (lr)l_D+ {\sqrt{d-1}}$,  
we can choose  $l_1\in \Z_{\geq 0}$ ({\it e.g},  
$l_1\geq {\tilde l}/\delta_0$) so that  
we have  
\begin{equation}\label{d-2} d(\partial(\left(\frac{k}{r}+l_1\right)Q), 
\partial(\left(\frac{k}{r}\right)Q)) \geq {\tilde l}~~\mbox{and}~~
d(\partial(\left(\frac{k}{r}\right)Q), \partial( \left(\frac{k}{r}-l_1\right) Q)) 
\geq {\tilde l}.\end{equation}

Note that  $(\underline{0})\in Q$ implies   
$(k/r-l_1)Q \subseteq (k/r)Q\subseteq  (k/r+l_1)Q$, which is the same as  
$$  P_{(k-l_1r)D}-(k/r-l_1)v \subseteq  
P_{kD}-(k/r)v\subseteq   P_{(k+l_1r)D} - (k/r+l_1)v.$$
Hence 
$P_{(k-l_1r)D}+l_1v\subseteq P_{kD} \subseteq P_{(k+l_1r)D} -l_1v$ 
and, by (\ref{d-2}),
$$d(\partial(P_{kD}), \partial(P_{(k-l_1r)D}+l_1v))\geq {\tilde l}~~\mbox{and}~~
d(\partial(P_{(k+l_1r)D}-l_1v), \partial(P_{kD}))\geq {\tilde l}.$$
(Note that translation by $kv/r$ is an isometry).

Therefore  $v_1\in P_{(k-l_1r)D}+l_1v$ implies that $l(W_{v_1})\subseteq P_{kD}$. Hence 
$P_{kD}''\subseteq P_{kD}\setminus (P_{(k-l_1r)D}+l_1v)$. 
Similarly $v_1\in S_1$ implies that $d(v_1, \partial(P_{kD}))\leq \sqrt{d-1})
\leq {\tilde l}$.
Hence  $S_1 \subseteq (P_{(k+l_1r)D}-l_1v)\setminus P_{kD}.$
Now 
$\#|P''_{kD}| \leq  \#|P_{kD}|-\#|P_{(k-l_1r)}D| = O(k^{d-2})$, and
similarly for $\#|S_1|$. This proves the lemma.\end{proof}

\section{Main theorem}

\vspace{5pt}

First we give a proof of Proposition~\ref{p5}, which replaces $HKd(X, kD)$ by 
$\varphi_{kD}$ upto $O(k^{d-2}).$

\vspace{5pt}

{\underline{Proof of Proposition}~~\ref{p5}}:\quad By Theorem~\ref{t2}, 
for $\lambda \geq 0$, 
$$HKd(X, kD)(\lambda+1)
 = \mbox{Vol}(\mathcal{P}_{kD}\cap \{z=\lambda+1\}).$$
By Lemma~\ref{l3}(2), for $\lambda\geq 0$, 
$$\sP_{kD}\cap \{z=1+\lambda\} = \left[P_{(k+l)D}
\times\{z= 1+\lambda \}\right] \cap \left[\sP_{kD}\cap \{z=1+\lambda\}\right] 
\subseteq Q_0(\lambda)\cup Q_1(\lambda),$$
where
$Q_0(\lambda) = (P_{(k+l)D}\setminus P_{kD})
\times \{z= 1+\lambda \}$ and 
$$Q_1(\lambda) = (P_{kD}
\times \{z= 1+\lambda \})\cap (\sP_{kD}\cap \{z=1+\lambda\}).$$
Now
 one can cover $P_{kD}$ by unit cells as follows:
$P_{kD} \subseteq \cup_{v\in S_1}W_v\cup\cup_{v\in P_{kD}\cap \Z^{d-1}}W_v,$
where 
$$S_1=\{v\in \Z^{d-1}\setminus P_{kD}\cap \Z^{d-1}\mid W_v\cap P_{kD}\neq \emptyset,~~
W_v\cap (P_{kD})^c\neq \emptyset\}.$$
Therefore (see Definition~\ref{d1}) 
$$Q_1(\lambda)\subseteq \cup_{v\in S_1}\Phi^v_{kD}(\lambda)
\cup \cup_{v\in P_{kD}\cap \Z^{d-1}}\Phi^v_{kD}(\lambda).$$
Hence
$$\mbox{Vol}_{d-1}\sP_{kD}\cap\{z= \lambda +1\}
\leq ((k+l)^{d-1}-k^{d-1})\mbox{Vol}(P_D)+
\sum_{v\in S_1}\varphi^v_{kD}(\lambda)+ \sum_{v\in P_{kD}\cap \Z^{d-1}}
\varphi^v_{kD}(\lambda)$$
$$=  O(k^{d-2})+ 
 \sum_{v\in S_1}\varphi^v_{kD}(\lambda)+ \sum_{v\in P_{kD}\cap \Z^{d-1}}
\varphi_{kD}(\lambda) +\sum_{v\in P''_{kD}}
[\varphi^v_{kD}(\lambda)-\varphi_{kD}(\lambda)],$$
where the last equality
 follows as $\varphi^v_{kD} = \varphi_{kD}$, for $v\in P_{kD}'$ (see Remark~\ref{r3}).

On the other hand, for $v\in P'_{kD}$, we have $W_v\subseteq P_{kD}$, therefore 
$\cup_{v\in P'_{kD}}\Phi_{kD}^v(\lambda) \subseteq Q_1(\lambda)$. Hence, 
(note $\dim~(\Phi^v_{kD}(\lambda)\cap\Phi^{v'}_{kD}(\lambda))<d-1$, for $v\neq v'$) 
$$\mbox{Vol}_{d-1}\sP_{kD}\cap\{z= \lambda +1\}\geq \sum_{v\in 
P'_{kD}}{\varphi_{kD}}(\lambda) =
 \sum_{v\in P_{kD}\cap \Z^{d-1}}\varphi_{kD}(\lambda)- \sum_{v\in P''_{kD}}
\varphi_{kD}(\lambda).$$

Also, by definition, $0\leq \varphi^v_{kD}(\lambda), \varphi_{kD}(\lambda) \leq 1$. 
  Now, by Lemma~\ref{sl1},  we can conclude that
 $$\mbox{Vol}_{d-1}\sP_{kD}\cap \{z= \lambda +1\}
=   \sum_{v\in P_{kD}\cap \Z^{d-1}}\varphi_{kD}(\lambda) +
O(k^{d-2})
= h^0(X, \mathcal{O}_X(kD))\varphi_{kD}(\lambda) 
+O(k^{d-2}).$$ This proves the proposition.\hfill$\Box$

\begin{rmk}\label{rm}Let $R = \oplus_{n\geq 0}R_n$ be a standard graded ring over a field $K$.
Let 
$$R^{(k)}=\bigoplus_{d\geq 0} R_{kd}=R_0\oplus R_k\oplus R_{2k}\oplus\cdots~~~\mbox{and}~~~
m_{R^{(k)}}=\bigoplus_{d\geq 1}R_{kd}$$
be the k-fold Veronese ring and its  homogeneous maximal ideal, respectively.  
Recall that we have defined $HKD(X, D)$ (or $e_{HK}(X,D)$) for a toric pair $(X, D)$ as the 
HK density function (or HK multiplicity, respectively) of 
the associated homogeneous coordinate ring with respect to its graded maximal ideal.
With this notation, if  $(R, {\bf m})$ denotes the 
homogeneous coordinate ring with the graded maximal ideal 
${\bf m}$ for a toric pair $(X, D)$ then 
we have $e_{HK}(R, {\bf m}) = e_{HK}(X, D)$ and 
$$e_{HK}(R, {\bf m}^k) = ke_{HK}(R^{(k)}, {\bf m}_{R^{(k)}}) = k e_{HK}(X, kD),$$
$$e_0(R, {\bf m}^k) = ke_0(R^{(k)}, {\bf m}_{R^{(k)}}) = ke_0(X, kD) = k^de_0(X, D).$$
\end{rmk}

Now we give a proof of the main theorem of this paper.
  
\vspace{5pt}

\noindent{\underline{Proof of Theorem~\ref{t1}}}\quad 
We denote the co-ordinate ring $K[S]$ of $(X, D)$\ by $R$. Therefore
$R= K[S] = K[{\chi}^{(u_1,1)},\ldots, {\chi}^{(u_r,1)}]$.

Then, by Remark~\ref{rm}, we have
\begin{equation}\label{e5} e_{HK}(R, m^k)-\frac{e_0(R, m^k)}{d!} = 
 ke_{HK}(X, kD)- k\frac{e_0(X, kD)}{d!}.\end{equation}
But $$\frac{e_0(X, kD)}{d!}= \frac{1}{d}\mbox{Vol}(P_{kD}) = 
\int_0^1\mbox{Vol}_{d-1}(P_{\lambda kD})d\lambda =\int_0^1
\mbox{Vol}_{d-1}(\mathcal{P}_{kD}\cap\{z= \lambda\})d\lambda.$$
Moreover, by Theorem~1.1 of [T2] and by Proposition~\ref{p5}, 
$$e_{HK}(X, kD) = \int_0^\infty HKd(X, kD)(\lambda)d\lambda
= \int_0^\infty\mbox{Vol}_{d-1}(\mathcal{P}_{kD}\cap\{z = \lambda\})d\lambda.$$
Hence 
$$e_{HK}(X, kD)-\frac{e_0(X, kD)}{d!}
= \int_0^{l/k}\mbox{Vol}_{d-1}(\mathcal{P}_{kD}\cap\{z= 1+\lambda\})d\lambda,$$
where the last equality follows by Lemma~\ref{l3}.
By Proposition~\ref{p5}
$$
\int_{0}^{l/k}\mbox{Vol}_{d-1}(\mathcal{P}_{kD}\cap \{z=1+\lambda\})d\lambda 
  = h^0(X, \sO_X(kD)) \int_0^{l/k} 
\varphi_{kD}(\lambda)\ d\lambda\ + O(k^{d-3}),$$
where by Remark~\ref{***}~(2)
$$ \int_0^{l/k} \varphi_{kD}(\lambda)\ d\lambda
= \frac{1}{k}\int_0^{l} \varphi_{D}(\lambda) d\lambda~~~\mbox{
and}~~~h^0(X, \sO_X(kD)) = \frac{e_0(X, D)k^{d-1}}{(d-1)!}
+ O(k^{d-2}).$$
Therefore 
$$\int_{0}^{l/k}\mbox{Vol}_{d-1}(\mathcal{P}_{kD}\cap \{z=1+\lambda\})d\lambda 
= k^{d-2} \frac{e_0(X,D)}{(d-1)!}\int_0^{l} 
\varphi_{D}(\lambda) d\lambda\ +O(k^{d-3}). $$
Therefore, by (\ref{e5}),  we have 
$$\lim_{k\to \infty}\ \frac{1}{k^{d-1}}\left(e_{HK}(R, m^k)-\frac{e_0(R, m^k)}{d!}\right) 
= \frac{e_0(R, {\bf m})}{(d-1)!}\int_0^\infty \varphi_{D}(\lambda)d\lambda.$$
This proves the theorem. $\Box$

\vspace{5pt}

Now we give a proof of Proposition \ref{t3} which shows the multiplicative property 
of the function $\varphi$ on the set of projective toric varieties. 

\vspace{5pt}

\noindent{\underline{Proof of Proposition \ref{t3}}}:\quad 
Let $(X, D)$ and $(Y, D')$ be two toric pairs of dimension $d-1\geq 1$ and 
$d'-1\geq 1$, respectively.
Now, by Proposition~\ref{p5}
$$ HKd(X, kD)(\lambda+1)
= h^0(X, \mathcal{O}_X(kD))\varphi_{X, kD}(\lambda)+O(k^{d-2}),~~\mbox{for}~~
\lambda\geq 0,$$
$$ HKd(Y, kD')(\lambda+1)
= h^0(Y, \mathcal{O}_Y(kD'))\varphi_{Y, kD'}(\lambda)+O(k^{d'-2}), 
~~\mbox{for}~~\lambda\geq 0.$$

Let $$e_X= \frac{e_0(X, D)}{(d-1)!},~~~e_Y= \frac{e_0(Y, D')}{(d'-1)!}~~\mbox{and}~~
e_{X\times Y} =  \frac{e_0(X\times Y, D\boxtimes D')}{(d+d'-2)!}
= \frac{e_0(X, D)}{(d-1)!}\frac{e_0(Y, D')}{(d'-1)!}.$$

Therefore , by Proposition 2.14 (and Definition~2.13) of [T2], we have 
$$\begin{array}{l}
HKd(X\times Y, k(D\boxtimes D'))(\lambda+1)\\\\
=e_X[k(\lambda+1)]^{d-1}HKd(Y,D')(\lambda+1)+
e_Y[k(\lambda+1)]^{d-1}HKd(X,D)(\lambda+1)\\\\
- HKd(X, D)(\lambda+1)HKd(Y,D')(\lambda+1)\\\\
=\left(e_Xk^{d-1}{(\lambda+1)}^{d-1}\right)
\left(h^0(Y, \mathcal{O}_Y(kD'))\varphi_{Y, kD'}(\lambda)+O(k^{d'-2})\right)\\
+\left(e_Yk^{d'-1}{(\lambda+1)}^{d'-1}\right)
\left(h^0(X, \mathcal{O}_X(kD))\varphi_{X, kD}(\lambda)+O(k^{d-2})\right)\\
- \left(h^0(Y, \mathcal{O}_Y(kD))\varphi_{Y, kD'}
(\lambda)+O(k^{d'-2})\right)\left(h^0(X, \mathcal{O}_X(kD))
\varphi_{X, kD}(\lambda)+O(k^{d-2})\right).
\end{array}$$
Since, $\varphi_{X, kD}(\lambda)$ and $\varphi_{Y, kD'}(\lambda) \in [0,1]$ and 
$$h^0(X, \mathcal{O}_X(kD))= e_X k^{d-1}+O(k^{d-2})~~\mbox{and}~~
h^0(Y, \mathcal{O}_Y(kD')) = e_Yk^{d'-1}+O(k^{d'-2}),$$
we have 
$$\begin{array}{l}
HKd(X\times Y, k(D\boxtimes D'))(\lambda+1)\\\\
=\left(e_Xk^{d-1}{(\lambda+1)}^{d-1}\right)
\left(e_Yk^{d'-1}\varphi_{Y, kD'}(\lambda)+O(k^{d'-2})\right)\\\\
+\left(e_Yk^{d'-1}{(\lambda+1)}^{d'-1}\right)
\left(e_Xk^{d-1}\varphi_{X, kD}(\lambda)+O(k^{d-2})\right)\\\\
- \left(e_Yk^{d'-1}\varphi_{Y, kD'}(\lambda)+O(k^{d'-2})\right)
\times\left(e_Xk^{d-1}\varphi_{X, kD}(\lambda)+O(k^{d-2})\right)\\\\
=\left(e_Xe_Yk^{d+d'-2}\right)
\left({(\lambda+1)}^{d-1}\varphi_{Y, kD'}(\lambda)+{(\lambda+1)}^{d'-1}
\varphi_{X, kD}(\lambda)-\varphi_{X, kD}(\lambda)\varphi_{Y, kD'}(\lambda)\right)\\\\
+ \left(e_Xk^{d-1}{(\lambda+1)}^{d-1}\times O(k^{d'-2})\right) 
+\left(e_Yk^{d'-1}{(\lambda+1)}^{d'-1}\times O(k^{d-2})\right)
+ O(k^{d+d'-3}).
\end{array}$$
By Remark~\ref{***}, we have 
 $\varphi_{X, kD}(\lambda) = \varphi_{X,D}(k\lambda)$ and similarly for the 
pair $(Y, D')$.
 In particular for any $x\in \R_{\geq 0}$ and any integer $k\geq 1$, 
we have (by substituting $\lambda = x/k$), 
$$\begin{array}{l}
\left[HKd(X\times Y, k(D\boxtimes D'))({x}/{k}+1)\right]/k^{d+d'-2}\\\\
 = (e_Xe_Y)\left[ ({x}/{k}+1)^{d-1}\varphi_{Y, D'}(x)+({x}/{k}+1)^{d'-1}
\varphi_{X, D}(x)-\varphi_{X, D}(x)\varphi_{Y, D'}(x)\right]\\\\
+ \frac{1}{k}\left[e_X({x}/{k}+1)^{d-1}\times O(1)
+ e_Y({x}/{k}+1)^{d'-1}\times O(1)\right]
+ O(k^{d+d'-3}).\end{array}$$

On the other hand, as $X\times Y$ is a toric variety, 
we have from Proposition \ref{p5},
$$\begin{array}{l}
 HKd(X\times Y, k(D\boxtimes D'))(\lambda+1)\\\\
=\left[h^0(X\times Y, k(D\boxtimes D'))\right]
 \left[\varphi_{X\times Y, k(D\boxtimes D')}(\lambda)\right]+O(k^{d+d'-3})\\\\
=\left[h^0(X, \mathcal{O}_X(kD))h^0(Y, \mathcal{O}_Y(kD))\right]
\left[\varphi_{X\times Y, k(D\boxtimes D')}(\lambda)\right]+O(k^{d+d'-3})\\\\
=\left[e_Xk^{d-1}+O(k^{d-2})\right]
\left[e_Yk^{d'-1}+ O(k^{d'-2})\right]
\left[\varphi_{X\times Y, k(D\boxtimes D')}(\lambda)\right]+O(k^{d+d'-3})\\\\
= \left[\varphi_{X\times Y, k(D\boxtimes D')}(\lambda)\right]
\left[e_Xe_Yk^{d+d'-2}\right]
+O(k^{d+d'-3}),\end{array}$$
Hence for any $x\geq 0$ and for any integer $k\geq 1$, we have
$$\frac{HKd(X\times Y, k(D\boxtimes D'))(x/k+1) }{k^{d+d'-2}}
= \left[\varphi_{X\times Y, D\boxtimes D'}(x)\right]
(e_Xe_Y) + O(1/k).$$
Now we fix $x\geq 0$ and  take lim as $k\to \infty$, then we have 
$$e_Xe_Y\left[\varphi_{X, D}(x)+ \varphi_{Y, D'}(x) - \varphi_{X, D}(x)
\varphi_{Y, D'}(x)\right]
= e_Xe_Y\left[\varphi_{X\times Y, D\boxtimes D'}(x)\right].$$
Therefore, for every $x\geq 0$, we have 
$$\varphi_{X, D}(x)+ \varphi_{Y, D'}(x) - \varphi_{X, D}(x)\varphi_{Y, D'}(x)
= \varphi_{X\times Y, D\boxtimes D'}(x).$$
This implies the proposition.\hfill$\Box$

\begin{defn}\label{d4}A rational polytope $P_D$  {\it tiles the space} $\R^{d-1}$
if for some $\lambda>0$
\begin{enumerate}
\item
$\bigcup_{v\in \Z^{d-1}} \left(v + P_{\lambda D}\right) = 
\R^{d-1}$
and 
\item $\dim \left[\left(v+ P_{\lambda D}\right) 
\cap \left(v'+ P_{\lambda D}\right)\right] < d-1$
if $v\neq v'$.
\end{enumerate}

Equivalently 
\begin{enumerate}
\item $\bigcup_{v\in \Z^{d-1}} \left[(v, 1)+ C_D \right]\cap \{z= \lambda +1\} = 
\R^{d-1}\times \{z= \lambda +1\}$
and 
\item $\dim \left[(v, 1)+ C_D \right] 
\cap \left[(v', 1)+ C_D \right]\cap \{z= \lambda +1\} 
< d-1$
if $v\neq v'$.
\end{enumerate}
It follows from the definition that if $P_D$ tiles the space 
$\R^{d-1}$ at $\lambda $ then $\lambda = (\mbox{Vol}_{d-1}(P_D))^{1-d}$.

In the literature this is known as {\em a simple tiling} (or $1$-tiling) 
 by the polytope $P_D$ with the lattice $M = \Z^{d-1}$.
\end{defn}

\begin{thm}\label{t4}
Let $(X, D)$ be  a toric pair of dimension $d-1\geq 1$. Then 
 $$ \left({e_0(X,D)}\right)^{\frac{2-d}{d-1}} \lim_{k\to \infty} 
\frac{e_{HK}(X, kD) - {e_0(X, kD)}/{d!}}{k^{d-2}}
\geq  \left[\frac{d-1}{d}\right]\left[(d-1)!\right]^{\frac{2-d}{d-1}}.$$
Morever, 
the equality hold, {\it i.e.}, 
\begin{equation}\label{e4}\left({e_0(X,D)}\right)^{\frac{2-d}{d-1}}
\lim_{k\to \infty} \frac{e_{HK}(X, kD) - 
{e_0(X, kD)}/{d!}}{k^{d-2}} = 
\left[\frac{d-1}{d}\right]\left[(d-1)!\right]^{\frac{2-d}{d-1}}.\end{equation}
if and only if $P_{D}$ tiles the space $\R^{d-1}$ for some $\lambda >0$.
\end{thm}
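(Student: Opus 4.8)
The plan is to translate everything into statements about the sectional volume function $\varphi_D$ and the geometry of the cone $C_D$, using Theorem~\ref{t1} to rewrite the renormalized limit. By Theorem~\ref{t1}, combined with Remark~\ref{rm} (which gives $e_{HK}(X,kD)-e_0(X,kD)/d!=\frac1k(e_{HK}(R,\mathbf m^k)-e_0(R,\mathbf m^k)/d!)$ after the Veronese identities), the quantity on the left is
$$
\left(e_0(X,D)\right)^{\frac{2-d}{d-1}}\lim_{k\to\infty}\frac{e_{HK}(X,kD)-e_0(X,kD)/d!}{k^{d-2}}
=\frac{\left(e_0(X,D)\right)^{\frac{2-d}{d-1}}e_0(X,D)}{(d-1)!}\int_0^\infty\varphi_D(\lambda)\,d\lambda,
$$
so the inequality to prove is equivalent to a clean lower bound purely about the polytope: writing $V=\mathrm{Vol}_{d-1}(P_D)=e_0(X,D)/(d-1)!$, we must show $\int_0^\infty\varphi_D(\lambda)\,d\lambda\ \geq\ \frac{d-1}{d}\,V^{-1/(d-1)}$, with equality iff $P_D$ tiles $\R^{d-1}$ with lattice $\Z^{d-1}$.

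The next step is to identify $\int_0^\infty\varphi_D(\lambda)\,d\lambda$ with a volume. By Definition~\ref{d1} and Lemma~\ref{l4}, $\varphi_D(\lambda)=\mathrm{Vol}_{d-1}\big((W_0\times\{z=\lambda+1\})\setminus\bigcup_{u\in\Z^{d-1}}((u,1)+C_D)\big)$, so $\int_0^\infty\varphi_D(\lambda)\,d\lambda$ is the $d$-dimensional volume of the region $Z=\{(w,z):w\in W_0,\ z\geq 1,\ (w,z)\notin (u,1)+C_D\ \forall u\in\Z^{d-1}\}$. By $\Z^{d-1}$-periodicity of the translated-cone union in the $w$-directions (the union $\bigcup_{u\in\Z^{d-1}}((u,1)+C_D)$ is invariant under $w\mapsto w+e_i$ at each fixed height), this equals the volume per unit cell of the complement $\big(\R^{d-1}\times\R_{\geq1}\big)\setminus\bigcup_{u\in\Z^{d-1}}((u,1)+C_D)$. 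Equivalently, slicing at height $1+\lambda$: the sets $(u,1)+C_D$ meet $\{z=1+\lambda\}$ in the translates $u+P_{\lambda D}=u+\lambda P_D$ of the dilated polytope, so $\varphi_D(\lambda)=1-\mathrm{Vol}_{d-1}\big(W_0\cap\bigcup_u(u+\lambda P_D)\big)$ whenever that union has no overlaps of top dimension; in general $\varphi_D(\lambda)\geq \max(0,\,1-\lambda^{d-1}V)$ pointwise, because the total measure (with multiplicity) of $\bigcup_u(u+\lambda P_D)$ inside one unit cell is exactly $\lambda^{d-1}V$. This is the key pointwise estimate. Integrating, $\int_0^\infty\varphi_D(\lambda)\,d\lambda\geq\int_0^{V^{-1/(d-1)}}(1-\lambda^{d-1}V)\,d\lambda=\frac{d-1}{d}V^{-1/(d-1)}$, which is exactly the claimed bound.

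For the equality case, the inequality above is an equality iff $\varphi_D(\lambda)=\max(0,1-\lambda^{d-1}V)$ for a.e.\ $\lambda$, which by continuity (Lemma~\ref{l4}) forces it for all $\lambda$; and this in turn holds iff for every $\lambda\le V^{-1/(d-1)}$ the translates $\{u+\lambda P_D\}_{u\in\Z^{d-1}}$ cover $W_0$ with overlaps of measure zero — i.e.\ iff $\lambda P_D$ tiles $\R^{d-1}$ for that range of $\lambda$. Taking $\lambda=V^{-1/(d-1)}$ gives the tiling asserted in Definition~\ref{d4}; conversely if $P_{\lambda_0 D}$ tiles for one $\lambda_0$ then necessarily $\lambda_0=V^{-1/(d-1)}$ and a scaling argument shows $\lambda P_D$ tiles for all $\lambda\le\lambda_0$ (a tiling by $P$ can be subdivided, or one checks directly that $\varphi_D$ achieves the extremal profile), giving equality. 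The main obstacle I anticipate is the careful bookkeeping that the "mass in a unit cell" of the cone-translate union at height $1+\lambda$ is exactly $\lambda^{d-1}V$ counted with multiplicity, and that overlaps contribute strictly positively unless they are lower-dimensional — i.e.\ making the pointwise inequality $\varphi_D(\lambda)\geq 1-\lambda^{d-1}V$ and its equality analysis fully rigorous, including that no part of $\bigcup_u(u+\lambda P_D)$ escapes: since $\underline 0\in P_D$ these translates for $u$ ranging over $\Z^{d-1}$ do tile-cover once their union is large enough, and one must confirm that within $W_0$ at height $1+\lambda$ every point that is covered is covered by some $(u,1)+C_D$ with $u\in\Z^{d-1}$, which is precisely the content of the slicing description of $\sP_D$ used already in Theorem~\ref{t2} and Lemma~\ref{l3}.
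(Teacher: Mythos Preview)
Your proposal is correct and follows essentially the same route as the paper. Both reduce, via Theorem~\ref{t1}, to the pointwise inequality $\varphi_D(\lambda)\ge 1-\lambda^{d-1}V$ (with $V=\mathrm{Vol}_{d-1}(P_D)$), integrate it over $[0,V^{-1/(d-1)}]$, and analyse equality by continuity of $\varphi_D$.

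The only difference is packaging: where you speak of ``mass with multiplicity of $\bigcup_u(u+\lambda P_D)$ inside one unit cell equals $\lambda^{d-1}V$'', the paper makes this explicit by the folding decomposition $P_{\lambda D}=\bigcup_{v\in\Z^{d-1}}P^v_{\lambda D}$ with $P^v_{\lambda D}-(v,0)=((-v,1)+C_D)\cap(W_0\times\{z=\lambda+1\})$, so that the cone-translate slice in $W_0$ is literally the pieces of $\lambda P_D$ translated back. This is exactly the bookkeeping you flagged as the main obstacle, and it resolves it cleanly. One small caution: in the equality direction your parenthetical ``a tiling by $P$ can be subdivided'' is not a valid argument for general polytopes (think of a hexagonal tiling); the correct reason, which both you and the paper use, is that $\underline 0\in P_D$ gives $\lambda P_D\subseteq \lambda_0 P_D$, so non-overlap at $\lambda_0$ forces non-overlap for all $\lambda\le\lambda_0$, and covering at $\lambda_0$ forces covering (hence $\varphi_D=0$) for all $\lambda\ge\lambda_0$.
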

\begin{proof} Let  $(X, D)$ be a toric pair  of dimension $d-1$.
 We choose 
a real number $\alpha >0 $ such that $e_0(X,D)= \alpha^{d-1}(d-1)!$.

For $v\in \Z^{d-1}$ and $\lambda \geq 0$, let
 $$P_{\lambda D}^v = (P_{\lambda D}\times \{z=\lambda +1\})\cap 
(W_{v}\times\{z=\lambda+1\}).$$
Note that 
$$P_{\lambda D}\times \{z=\lambda +1\} = ((\underline{0}, 1)+C_D)\cap \{z=\lambda+1\}
= ((\underline{0}, 1)+C_D)\cap (\R^{d-1}\times \{z=\lambda+1\})$$
and 
$$P_{\lambda D}^v =  ((\underline{0}, 1)+C_D)\cap (W_{v}\times\{z=\lambda+1\}).$$
Hence 
\begin{equation}\label{wy2}
P_{\lambda D}\times \{z=\lambda +1\}=\bigcup_{v\in \Z^{d-1}}P_{\lambda D}^v.
\end{equation}
Also 
\begin{equation}\label{star2} P_{\lambda D}^v -(v, 0) =  ((-v, 1)+C_D) \cap 
(W_{\underline{0}}\times\{z=\lambda+1\}).\end{equation}
Therefore
\begin{equation}\label{star1}\bigcup_{u\in \Z^{d-1}}((u,1)+C_D)\cap (W_{\underline{0}}\times \{z= \lambda +1\}) = 
\bigcup_{v\in \Z^{d-1}}P_{\lambda D}^v-(v, 0),\end{equation}
where by (\ref{wy2}), 
$$\mbox{Vol}_{d-1}(\cup_{v\in \Z^{d-1}} P^v_{\lambda D}-(v, 0)) \leq 
\sum_{v\in \Z^{d-1}}\mbox{Vol}_{d-1}(P^v_{\lambda D}) = 
\mbox{Vol}_{d-1}(P_{\lambda D}).$$
Hence 
\begin{equation}\label{ptl}
\mbox{Vol}_{d-1}(W_{\underline{0}}\times\{z=\lambda+1\})\setminus 
\cup_{u\in \Z^{d-1}}((u,1)+C_D)\geq 1-\mbox{Vol}_{d-1}(P_{\lambda D}) =
1-\lambda^{d-1}\mbox{Vol}_{d-1}(P_D).\end{equation}
Therefore
$$\int_0^{\infty}\varphi_{X, D}(\lambda)d\lambda \geq
\int_0^{1/\alpha}\varphi_{X, D}(\lambda)d\lambda  
\geq \int_0^{1/\alpha}(1-\lambda^{d-1}\alpha^{d-1})d\lambda 
= \frac{1}{\alpha}
\int_0^1(1-\beta^{d-1})d\beta.$$
This implies
\begin{equation}\label{wy3}
\frac{e_0(X, D)}{(d-1)!}
\int_0^\infty\varphi_{X, D}(\lambda)d\lambda \geq  
\alpha^{d-2} \int_0^1(1-\beta^{d-1})d\beta.\end{equation}
If we denote 
$$A(X,D) = \lim_{k\to \infty} \frac{e_{HK}(X, kD) - {e_0(X, kD)}/{d!}}{k^{d-2}}$$ 
then we have 
$$\left({e_0(X,D)}\right)^{\frac{2-d}{d-1}}A(X, D) \geq  
\frac{((d-1)!)^{(2-d)/(d-1)}}{\alpha^{d-2}}\alpha^{d-2} \int_0^1(1-\beta^{d-1})d\beta
= \left[\frac{d-1}{d}\right]\left[(d-1)!\right]^{\frac{2-d}{d-1}}.$$
This proves Assertion~(1). 

\noindent~(2)\quad Suppose the polytope $P_D$ tiles the space 
$\R^{d-1}$, for some $\lambda_0>0$. 
Then, by (\ref{star1}) and Definition~\ref{d4}~(1), 
$$\bigcup_{v\in \Z^{d-1}}P_{\lambda_0 D}^v-(v, 0) = 
\bigcup_{v\in \Z^{d-1}}((-v,1)+C_D)\cap (W_{\underline{0}}\times 
\{z= \lambda_0 +1\}) = W_{\underline 0}\times \{z=\lambda_0+1\}.$$
This implies, by (\ref{star2}) and Definition~\ref{d4}~(2),
$$1 = \mbox{Vol}_{d-1}(\cup_{v\in \Z^{d-1}} P^v_{\lambda_0 D}-(v, 0)) =  
\sum_{v\in \Z^{d-1}}\mbox{Vol}_{d-1}(P^v_{\lambda_0 D}-(v, 0)) $$
$$ = \sum_{v\in \Z^{d-1}}\mbox{Vol}_{d-1}(P^v_{\lambda_0 D}) = 
\mbox{Vol}_{d-1}(P_{\lambda_0 D}) = \lambda_0^{d-1}\alpha^{d-1}.$$
This implies $\lambda_0 = 1/\alpha$. If $\lambda< \lambda_0$ then 
$\dim([P^v_{\lambda D}-(v,0)]\cap [P^{v'}_{\lambda D}-(v',0)]\cap 
\{z = \lambda+1\} <d-1$ implies  
$$\mbox{Vol}_{d-1}(\cup_{v\in \Z^{d-1}} P^v_{\lambda_0 D}-(v, 0)) = 
\mbox{Vol}_{d-1}(P_{\lambda D}) = \lambda^{d-1}\alpha^{d-1}, $$

Therefore
$$\int_0^{\infty}\varphi_{X, D}(\lambda)d\lambda = 
\int_0^{1/\alpha}\varphi_{X, D}(\lambda)d\lambda  
= \int_0^{1/\alpha}(1-\lambda^{d-1}\alpha^{d-1})d\lambda 
= \frac{1}{\alpha}
\int_0^1(1-\beta^{d-1})d\beta.$$
This implies
\begin{equation}\label{wy4}
\frac{e_0(X, D)}{(d-1)!}
\int_0^\infty\varphi_{X, D}(\lambda)d\lambda =   
\alpha^{d-2} \int_0^1(1-\beta^{d-1})d\beta.\end{equation}
Now the equality, as given in (\ref{e4}), follows from Theorem~\ref{t1}.

Conversely suppose the equality in (\ref{e4}) holds then 
retracing the above argument we have
$$\int_0^{\infty}\varphi_{X, D}(\lambda)d\lambda = 
\int_0^{1/\alpha}\varphi_{X, D}(\lambda)d\lambda  
+\int_{1/\alpha}^{\infty}\varphi_{X, D}(\lambda)d\lambda  
= \int_0^{1/\alpha}(1-\lambda^{d-1}\alpha^{d-1})d\lambda. $$
But, by (\ref{ptl}), for every $\lambda >0$, we have  
$\varphi_{X, D}(\lambda)\geq 1-\lambda^{d-1}\alpha^{d-1}$ 
and $\varphi_{X, D}(\lambda) \geq 0$. Hence the continuity of $\varphi_{X,D}$ 
(see Lemma~\ref{l4}) 
implies 
$$\begin{array}{lcl}
\varphi_{X, D}(\lambda) & = & 1-\lambda^{d-1}\alpha^{d-1} = 
1 - \mbox{Vol}_{d-1}(P_{\lambda D})~~\mbox{if}~~\lambda \leq 1/\alpha\\\
& = & 0~~\mbox{if}~~\lambda \geq \alpha.
\end{array}$$
This implies, for $\lambda_0 = 1/\alpha $, we have 
$$1 = \mbox{Vol}_{d-1}[\bigcup_{v\in \Z^{d-1}} (P^v_{\lambda_0 D}-(v, 0))] \leq 
 \sum_{v\in \Z^{d-1}}\mbox{Vol}_{d-1}(P^v_{\lambda_0 D})  = 
\mbox{Vol}_{d-1}(P_{\lambda_0 D}) = 1. $$
Therefore 
$$ \dim\left[(P^v_{\lambda_0 D}-
(v, 0))\cap (P^{v'}_{\lambda_0 D}-(v', 0))\right] <d-1$$
and 
$$\bigcup_{u\in \Z^{d-1}}((u,1)+C_D)\cap (W_{\underline{0}}\times 
\{z= \lambda_0 +1\}) = \bigcup_{v\in \Z^{d-1}}\left[P_{\lambda_0 D}^v-(v, 0)
\right]  
 = W_{\underline 0}\times \{z=\lambda_0+1\}.$$
Now, by Lemma~\ref{l2}, we can conclude the same thing, by 
replacing $W_{\underline 0}$ by $W_v$, for any $v\in \Z^{d-1}$. In particular 
$P_D$ tiles the space $\R^{d-1}$ for $\lambda_0 = 1/\alpha$.
This completes the proof of  Assertion~(2) and hence the theorem.
\end{proof}

\begin{ex} Let $(X_0, D_0)$ be the Segre self-product of 
$(\P^1, \sO_{\P^1}(m_0))$, taken $d-1$ times, {\it i.e.},  
$$(X_0, D_0) = (\P^1\times\cdots \times\P^1,
\sO_{\P^1}(m_0)\boxtimes\cdots \boxtimes\sO_{\P^1}(m_0)), $$
for some integer $m_0\geq 1$. Then
 the polytope $P_{D_0} = [0, m_0]^{d-1}$ and 
$(1/m_0)P_{D_0} = [0, 1]^{d-1}$.  This implies that  $P_{D_0}$ 
tiles the space $\R^{d-1}$ for 
$\lambda = 1/m_0$. 
\end{ex}

\begin{rmk}\label{r4}We recall the following 
 conjecture of Watanabe-Yoshida (Conjecture 4.2, [WY2]):
For a  Noetherian unmixed nonregular local ring $(R, {\bf m}, K)$ of dimension $d$ 
with $K = {\overline \F_p}$,
$$e_{HK}(R, {\bf m})\geq  e_{HK}(A_{p,d}, (Y_0, \ldots, Y_d)),$$ 
where $A_{p,d}$ is given by 
$A_{p,d} :=F_p[[Y_0,Y_1,...,Y_d]]/(Y_0^2 +\cdots+Y_d^2).$

\vspace{5pt}

Here  Theorem~\ref{t4}
implies that for any toric pair 
$(X, D)$ of dimension $d-1$, 
the asymptotic growth of the HK multiplicity (relative  to its  
usual multiplicity, $e_0(X, D)$)  is  always $\geq $  
 the asymptotic growth of the HK multiplicity (relative to 
its usual multiplicity $e_0(X_0, D_0)$) for the pair $(X_0, D_0)$, where 
$(X_0, D_0)$ is the Segre self-product, of any  toric pair of the type
$(\P^1, \sO_{\P^1}(m_0))$, taken $d-1$ times.
Note that 
the associated coordinate ring for any such pair $(X_0, D_0)$ 
is given by a set of  quadratic binomials over $K$.
\end{rmk}

\begin{rmk}\label{pt}Let $P$ be a rational convex polytope in $\R^{d-1}$; then 
we can formulate  the property that 
$P$ {\em tiles the space} $\R^{d-1}$ in terms of 
HK multiplicity, as follows: 

We  choose $m>>0$ (by Corollary~2.2.18 in [CLS], any
$m \geq (d-2)n_1$, where $n_1P$ is an integral polytope) 
such that 
$mP$ is a very ample integral convex polytope. In particular there is a toric pair 
$(X, D)$ such that the associated polyope $P_D = mP$.
Then the polytope $P$ tiles the space $\R^{d-1}$ for some $\lambda >0$ if and only if
$$\left({e_0(X,D)}\right)^{\frac{2-d}{d-1}}
\lim_{k\to \infty} \frac{e_{HK}(X, kD) - 
{e_0(X, kD)}/{d!}}{k^{d-2}} = 
\left[\frac{d-1}{d}\right]\left[(d-1)!\right]^{\frac{2-d}{d-1}}.$$

Note that this criteria is independent of the choice of $m$, as left hand side of
the above equation deos not change if we replace $D$ by an integral multiple of $D$.
Moreover, if $P$ tiles the space $\R^{d-1}$ then it tiles at $\lambda = 
(\mbox{Vol}~P)^{1-d}$.
\end{rmk}

\section{Examples}

\begin{ex}\label{ep1}
We compute the $HK$ density function for the toric 
pair $(X, D)=(\P^1, \sO(n))$ for $n\in \N$. 
The polytope $P_D$ can be taken to be the line segment $[0, n]$ 
(upto translation by integer points). Then $\sP_D=\bigcup_{i=0}^{n-1} P_i$ 
where $P_i=\text{Conv}\left((0,0), (i,1), (i+1,1), (i+1, \frac{n+1}{n})
\right)$, $i=0,\ldots, n-1$. One has 
\begin{eqnarray*}
HKd(X, D)(\lambda)&=&
\begin{cases} 
                                           n\lambda & \text{ if $ 0\leq \lambda < 1$} \\
                                          n(1-n (\lambda-1))  &  \text{ if $ 1\leq \lambda < 1+\frac{1}{n}$}\\ 
                                          0 & \text{ if $\lambda\geq 1+\frac{1}{n}.$} 
                                          \end{cases}
\end{eqnarray*}
Moreover $\varphi_{kD}(\lambda) = 1-nk\lambda$ if $0\leq \lambda <1/nk$ and 
$\varphi_{kD}(\lambda) = 0$ otherwise.
\end{ex}
\begin{ex}
\label{e1}
We compute the HK density function for the Hirzebruch surface $X=F_a$ (See [T1] for a different 
geometric approach for this) with 
parameter $a\in{\mathbb{N}}$, which is a ruled surface over $\P^1_k$, 
where k is a field of characteristic $p> 0$. See [Fu] for a detailed description 
of the surface as a toric variety. The $T$-Cartier divisors are given by 
$D_i=V(v_i), i=1,2 ,3 , 4$, where $v_1=e_1, v_2=e_2, v_3=-e_1+ae_2, v_4=-e_2$ and 
$V(v_i)$  denotes the $T$-orbit closure corresponding to the cone generated by $v_i$. 
We know the Picard group is generated by $\{D_i\ : i=1, 2,3, 4\}$ over $\mathbb{Z}$. 
One can check the only relations in $\text{Pic}(X)$\ can be described by 
$D_3\thicksim D_1$ and $D_2\thicksim D_4-aD_1$. Therefore 
$\text{Pic}(X)=\mathbb{Z}D_1\oplus\mathbb{Z}D_4$. One can use standard methods in toric geometry to see 
that $D=cD_1+dD_4$ is ample  if and only if $a, c>0$. Then 
$P_D=\{(x,y)\in M_\mathbb{R}\ |\ x\geq -c, y\leq d, x\leq ay\}$ and 
$\alpha^2=\text{Vol}(P_D)=cd+\frac{ad^2}{2}$. To consider $HKd(X,D)$  for 
$D=cD_1+dD_4$, we split it into two different cases.

\begin{enumerate}
\item Case 1: $c \geq d $ 
\begin{eqnarray*}
                                   HKd(X,D)(\lambda)&=&    
\begin{cases} 
                                                      (cd+\frac{ad^2}{2})\lambda^2 & \text{ if 
$ 0\leq \lambda < 1$} \\\\
                                                      (cd+\frac{ad^2}{2})\lambda^2 \\
                                                      - (c+\frac{ad}{2}+1)(d+1)(cd+\frac{ad^2}{2})(\lambda-1)^2  &  \text{ if $ 1\leq \lambda < 1+\frac{1}{c+ad}$}\\\\
                                                      (c+\frac{ad}{2})(d+1)\frac{1}{2a}(c+1-c\lambda)^2\\
                                                      +(cd+\frac{ad^2}{2})\lambda(d+1-d\lambda) &  \text{ if $ 1+\frac{1}{c+ad}\leq \lambda < 1 +\frac{1}{c} $}\\\\
                                                      (cd+\frac{ad^2}{2})\lambda(d+1-d\lambda)   & \text{ if $1+\frac{1}{c}\leq \lambda < 1 +\frac{1}{d}.$}\\
                                                      0 & \text{ if $\lambda\geq 1+\frac{1}{d}.$} 
                                          \end{cases}
\end{eqnarray*}

\item Case 2: $c \leq d $ 
\begin{eqnarray*}
                                   HKd(X,D)(\lambda)&=&    
\begin{cases} 
                                                      (cd+\frac{ad^2}{2})\lambda^2 & \text{ if $ 0\leq \lambda < 1$} \\\\
                                                      (cd+\frac{ad^2}{2})\lambda^2 \\
                                                      - (c+\frac{ad}{2}+1)(d+1)(cd+\frac{ad^2}{2})(\lambda-1)^2  &  \text{ if $ 1\leq \lambda < 1+\frac{1}{c+ad}$}\\\\
                                                      (c+\frac{ad}{2})(d+1)\frac{1}{2a}(c+1-c\lambda)^2\\
                                                       +(cd+\frac{ad^2}{2})(d+1-d\lambda) &  \text{ if $ 1+\frac{1}{c+ad}\leq \lambda < 1 +\frac{1}{d}$}\\\\
                                                      (cd+\frac{ad^2}{2} +\frac{ad}{2})\frac{1}{2a}\big(a+1-(c+ad)(\lambda-1)\big)^2\\
                                                      +\frac{c}{2a}(c+1-c\lambda)^2 & \text{ if $1+ \frac{1}{d}\leq \lambda < 1 +\frac{a+1}{ad+c}$}\\\\
                                                        \frac{c}{2a}(c+1-c\lambda)^2&  \text{ if $1 +\frac{a+1}{ad+c}\leq \lambda < \frac{1}{c}$.}\\
                                                        0 & \text{ if $\lambda\geq 1+\frac{1}{c}.$} 
                                                      \end{cases}
\end{eqnarray*}

\end{enumerate}
\end{ex}
\begin{ex}
\label{ex4.8}
In this example we consider how  $\varphi_{X,D}$ changes as $D$ varies over 
the ample cone of divisors on $X$. We consider this question for the Hirzebruch 
surface $X=F_a$ with parameter $a\in{\mathbb{N}}$,  as in Example \ref{e1}. 
Let $D=cD_1+dD_4$ be a very ample $T$-Cartier divisor. 
Then $P_D=\{(x,y)\in M_\mathbb{R}\ |\ x\geq -c, y\leq d, x\leq ay\}$. 
To consider $\varphi_{X,D}$ for $D=cD_1+dD_4$, we split it into two different cases.
\begin{enumerate}
 \item  When $c\geq d$\ :
  \begin{eqnarray*}
  \varphi_{X,D}(\lambda)&=&
  \begin{cases}
  1-\lambda^2(cd +\frac{ad^2}{2}) & \text{if $0\leq \lambda \leq \frac{1}{ad+c}$,}\\
  (1-\lambda d)+ \frac{(1-\lambda c)^2}{2a} & \text{if $\frac{1}{ad+c}\leq \lambda \leq \frac{1}{c}$,}\\
  1-\lambda d & \text{if $\frac{1}{c}\leq \lambda \leq \frac{1}{d}$,}\\
  0 & \text{if $\lambda \geq \frac{1}{d}$.}
  \end{cases}
 \end{eqnarray*}
 \item When $c\leq d$\ :
  \begin{eqnarray*}
  \varphi_{X,D}(\lambda)&=&
  \begin{cases}
  1-\lambda^2(cd +\frac{ad^2}{2}) & \text{if $0\leq \lambda \leq \frac{1}{ad+c}$,}\\
  (1-\lambda d)+ \frac{(1-\lambda c)^2}{2a} & \text{if  $\frac{1}{ad+c}\leq \lambda \leq \frac{1}{d}$,}\\
   \frac{(1+a-\lambda(ad+ c))^2}{2a} & \text{if $\frac{1}{d}\leq \lambda \leq \frac{1+a}{ad+c}$,}\\
  0 & \text{if $\lambda\geq  \frac{1+a}{ad+c}$.}
  \end{cases}
 \end{eqnarray*}
\end{enumerate}
\end{ex}

\begin{ex}
Here we compute the $\varphi_{X, -K}$ of the smooth Fano toric 
varieties $X$ of dimension $d-1=2$ with respect to the anticanonical 
divisor $-K$, namely $\P^2$, and blow ups of $\P^2$ at one, 
two and three points with respect to the anticanonical divisor $-K=\sum D_i$, 
where $D_i$ are the T-Cartier divisors on the respective varieties. 
We find $P_{-K}$, and eventually $\varphi_{X, -K}$. $\varphi_{X, -K}$ equals 
the volume of the darker shaded region at $Z=\lambda$. For each surface we 
denote the co-ordinate ring by $R$ and the homogeneous maximal ideal by 
${\bf m}$ with respect to the respective embedding.
Let $$A(X,D) = \lim_{k\to \infty} \frac{e_{HK}(R, {\bf m}^k) - 
e_0(R,{\bf m}^k)/d!}{k^{d-1}}.$$
\begin{enumerate}
\item {$\P^1\times \P^1$}
\begin{figure}[h]
\begin{tabular}{ccc}
\input 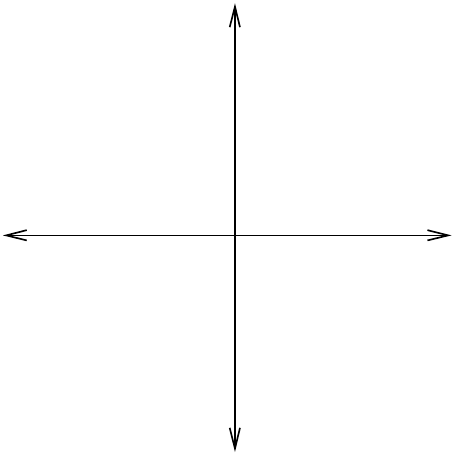_t &\hspace{.2 cm} \input 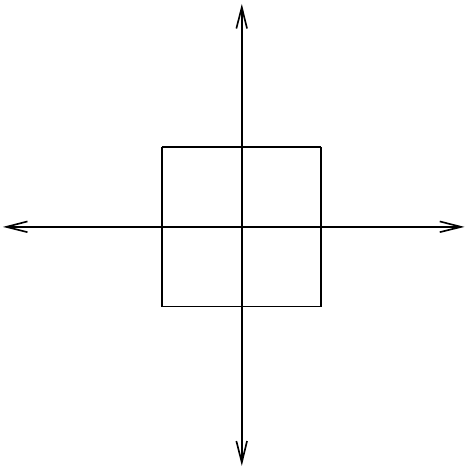_t & \hspace{.2cm}\input 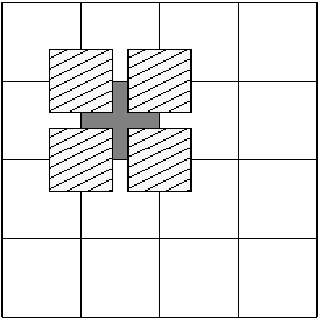_t\\
$\P^1\times\P^1$&$P_{-K}$ & \hspace{.2cm}$\varphi_{\P^1\times\P^1, -K}(\lambda)$ at $\lambda=1/\sqrt{5}$\\
\end{tabular}
\label{Figure 1}
\caption{}
\end{figure}
\begin{eqnarray*}
\varphi_{\P^1\times\P^1, -K}(\lambda)&=&
\begin{cases}
1-4\lambda^2 & \text{if $0\leq \ \lambda\leq \frac{1}{2}$,}\\
0 & \text{otherwise},
\end{cases}\\
&&\hspace{-2.7cm}
\int \varphi_{\P^1\times\P^1, -K}(\lambda)d\lambda =\frac{1}{3}~~\mbox{and}~~~
A(\P^1\times\P^1, -K) = 2\left(\frac{1}{3}\right) = \frac{2}{3}.
\end{eqnarray*}
\item {$\P^2$, the projective space}
\begin{figure}[h]
\begin{tabular}{ccc}
\input 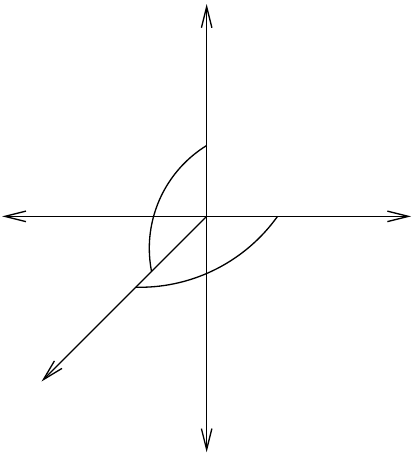_t &\hspace{.4 cm} \input 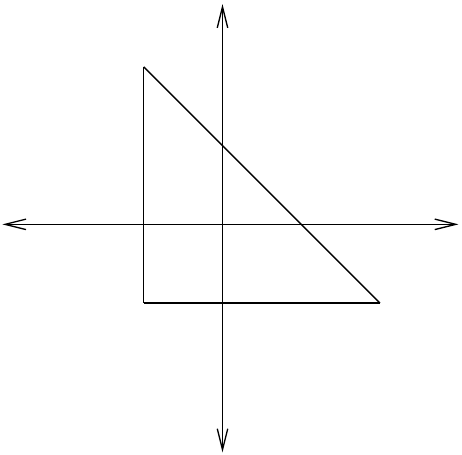_t & \hspace{.7cm}\input 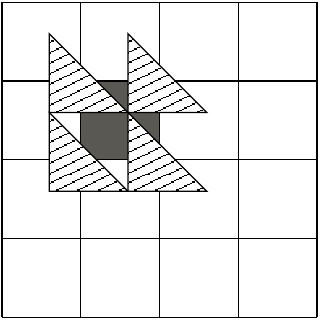_t\\
$\P^2$&$P_{-K}$ & \hspace{.6cm}$\varphi_{\P^2, -K}(\lambda)$ at $\lambda=1/3$\\
\end{tabular}
\label{Figure 1}
\caption{}
\end{figure}
\begin{eqnarray*}
\varphi_{\P^2, -K}(\lambda)&=&
\begin{cases}
1-\frac{9}{2}\lambda^2 & \text{if $0\leq \ \lambda\leq \frac{1}{3}$,}\\
\frac{1}{2}(2-3\lambda)^2 & \text{if $\frac{1}{3}\leq \ \lambda\leq \frac{2}{3}$,}\\
0 & \text{otherwise},
\end{cases}\\
&&\hspace{-2.2cm }
\int \varphi_{\P^2, -K}(\lambda)d\lambda=
\frac{1}{3}~~\mbox{and}~~~A(\P^2, -K) = \left(\frac{9}{4}\right)\left(\frac{1}{3}\right) = \frac{3}{4}.
\end{eqnarray*}
\item{$X_3=$ blow-up of $\P^2$ at one point}
\begin{figure}[h]
\begin{tabular}{ccc}
\input 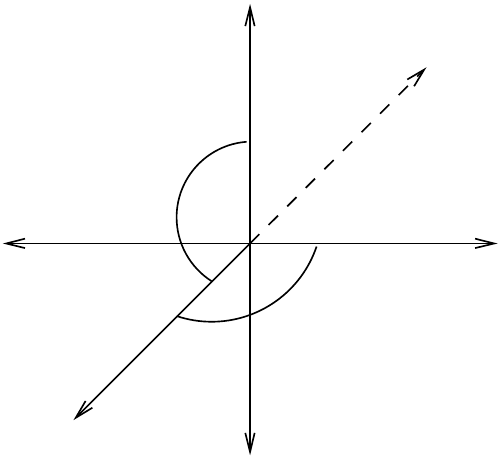_t &\hspace{.2cm} \input 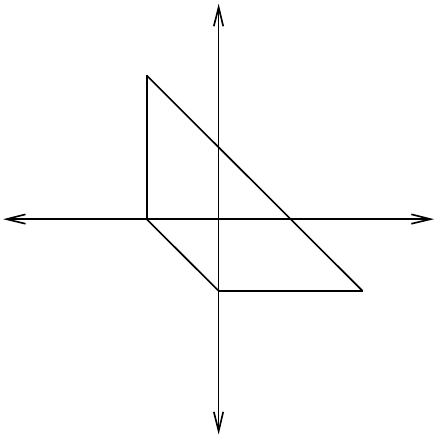_t & \hspace{.4cm}\input 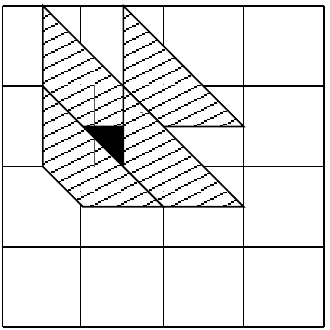_t\\
$X_3=$ blow-up of $\P^2$ at one point & $P_{-K}$ & \hspace{.4cm}$\varphi_{X_3, -K}(\lambda)$ at $\lambda=1/2$\\
\end{tabular}
\label{Figure 2}
\caption{}
\end{figure}
\begin{eqnarray*}
\varphi_{X_3, -K}(\lambda)&=&
\begin{cases}
1-4\lambda^2 & \text{if $0\leq \ \lambda\leq \frac{1}{3}$,}\\
\frac{1}{2}(\lambda^2-6\lambda+3)&\text{if $\frac{1}{3}\leq \ \lambda\leq \frac{1}{2}$,}\\
\frac{1}{2}(2-3\lambda)^2 & \text{if  $\frac{1}{2}\leq \ \lambda\leq \frac{2}{3},$}\\
0 & \text{otherwise},
\end{cases}\\
&&\hspace{-2.2cm }\int \varphi_{X_3, -K}(\lambda)d\lambda=
\frac{25}{72}~~\mbox{and}~~A(X_3, -K) = 2\left(\frac{25}{72}\right) = \frac{25}{36}.
\end{eqnarray*}
 \item {$X_4=$ blow-up of $\P^2$ at two points}
 \begin{figure}[h]
\begin{tabular}{ccc}
\input 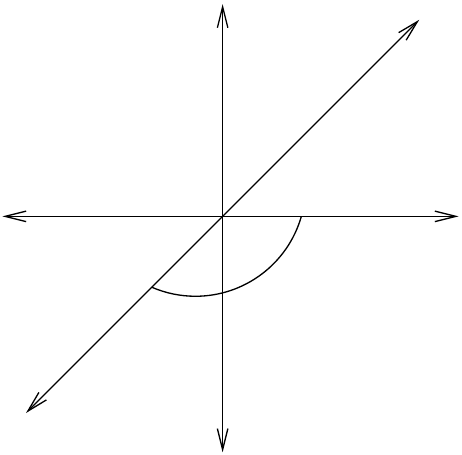_t &\hspace{.1 cm} \input 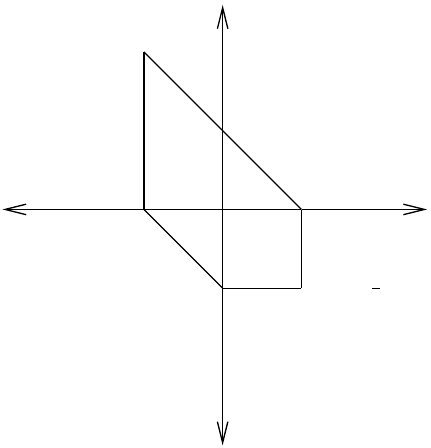_t & \hspace{.6cm}\input 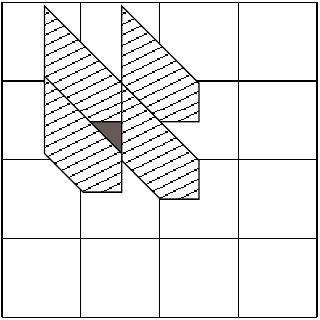_t\\
$X_4=$ blow-up of $\P^2$ at two points & $P_{-K}$ &\hspace{.4cm} $\varphi_{X_3, -K}(\lambda)$ at $\lambda=1/2$\\
\end{tabular}
\label{Figure 3}
\caption{}
\end{figure}
\begin{eqnarray*}
\varphi_{X_4,-K}(\lambda)&=&
\begin{cases}
1-\frac{7}{2}\lambda^2 & \text{if $0\leq \ \lambda\leq \frac{1}{2}$,}\\
\frac{1}{2}(2-3\lambda)^2 & \text{if $\frac{1}{2}\leq \ \lambda\leq \frac{2}{3}$,}\\
0 & \text{otherwise},
\end{cases}\\
&&\hspace{-2.2cm }
\int \varphi_{X_4, -K}(\lambda)d\lambda=  \frac{13}{36}~~\mbox{and}~~A(X_4, -K) =   
\left(\frac{7}{4}\right)\left(\frac{13}{36}\right) = \frac{91}{144}.
\end{eqnarray*}
\item {$X_5=$ blow-up of $\P^2$ at three points}
\begin{figure}[h]
\begin{tabular}{ccc}
\input 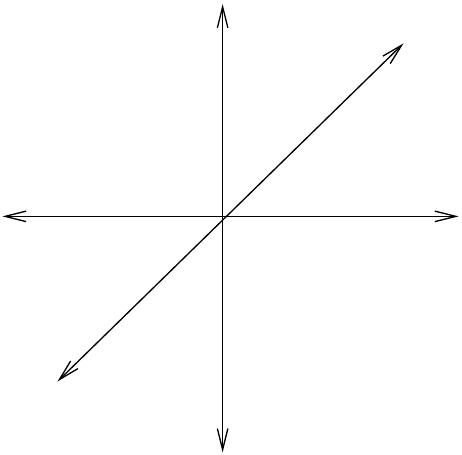_t &\hspace{.2cm} \input 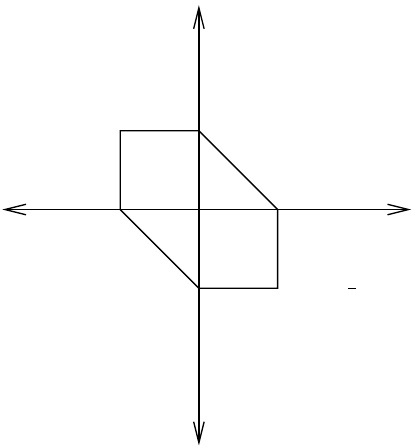_t & \hspace{.5cm}\input 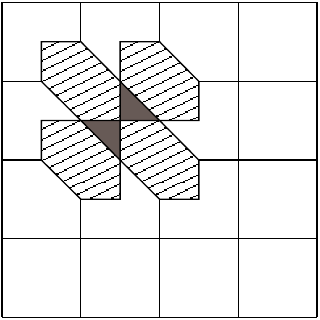_t\\
$X_5=$ blow-up of $\P^2$ at three points & $P_{-K}$ & \hspace{.4cm}$\varphi_{X_4, -K}(\lambda)$ at $\lambda=1/2$\\
\end{tabular}
\label{Figure 4}
\caption{}
\end{figure}
\begin{eqnarray*}
\varphi_{X_5,-K}(\lambda)&=&
\begin{cases}
1-3\lambda^2 & \text{if $0\leq \ \lambda\leq \frac{1}{2}$,}\\
(2-3\lambda)^2 & \text{if $\frac{1}{2}\leq \ \lambda\leq \frac{2}{3}$,}\\
0 & \text{otherwise},
\end{cases}\\
&&\hspace{-2.2cm }\int \varphi_{X_5, -K}(\lambda)d\lambda= \frac{7}{18} 
~~\mbox{and}~~A(X_5, -K) = \left(\frac{3}{2}\right)\left(\frac{7}{18}\right) 
= \frac{7}{12}.
\end{eqnarray*}
\end{enumerate}
\end{ex}
\newpage
\begin{rmk}Given $A(X,D)$ 
as in the above example, if
  we define (see Theorem~\ref{t4} and (\ref{e4})) 
$$B(X,D) = \left({e_0(X,D)}\right)^{\frac{2-d}{d-1}}A(X,D) - 
\left[\frac{d-1}{d}\right]\left[(d-1)!\right]^{\frac{2-d}{d-1}},$$
then we have 
$$B(\P^1\times \P^1, -K) = 0~~\mbox{and}~~~B(\P^2, -K) > 
B(X_3, -K) >  B(X_4, -K) > B(X_5, -K).$$
\end{rmk}

\begin{rmk} The equality given by (\ref{e4}) can be achieved by a toric pair
$(X, D)$ other than a self product of $(\P^1, \sO(m_0))$.
However, for $d-1=2$ and such a pair $(X, D)$, $P_D$
must be a centrally symmetric hexagon (a convex body 
$C\subset \R^{d-1}$ is said to be centrally symmetric with respect to origin, 
if $x\in C$ if and only if $-x\in C$), see [Sc].  
\begin{figure}[h]
\begin{tabular}{cc}
\input 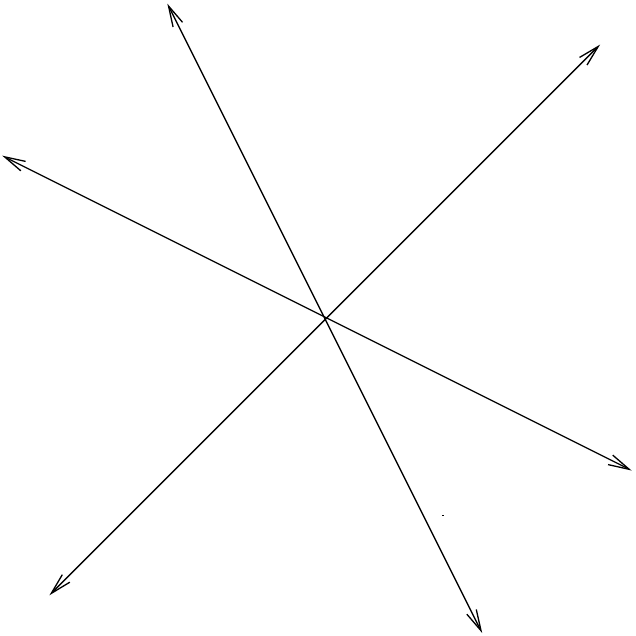_t &\hspace{.2cm} \input 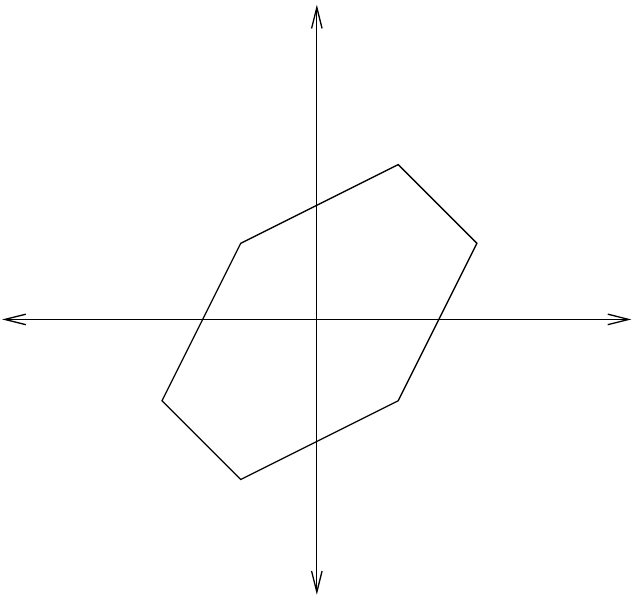_t \\
& $P_{D}, D=3\sum D_i$\\\\
\end{tabular}
\caption{}
\end{figure}
For $d=3$, consider the fan $\Delta$ in Figure 6, where 
$v_1=2e_1-e_2, v_2=e_1+e_2, v_3=-e_1+2e_2, v_4=-v_1, v_5=-v_2, v_6=-v_3$. The fan $\Delta$ 
has the maximal cones $\sigma_i= \langle v_i, v_{i+1}\rangle, i=1, \ldots, 6$, with the convention 
$v_7=v_1$. This gives a singular toric surface, since the cones $\sigma_i$ s are not smooth 
(Theorem 3.1.18, [CLS]). Consider the divisor $D=3\sum_i D_i$ to 
get $P_D$ as in Figure 6. Since dimension of $P_D$ is $2$, $P_D$ is normal (Theorem 2.2.12, [CLS]) 
and hence is very ample (Proposition 2.2.18, [CLS]). By Proposition 6.1.10, [CLS] it follows 
that $D$ is very ample. We see that such a $P_D$ is indeed possible, where $D$ is a very ample $T$-Cartier 
divisor on $X(\Delta)$ with Vol($P_D)=\alpha^2$ such that $\varphi_{X,D}({\lambda}/{\alpha})=1-\lambda^2$.\\
\begin{figure}[h]
\begin{tabular}{cc}
\input 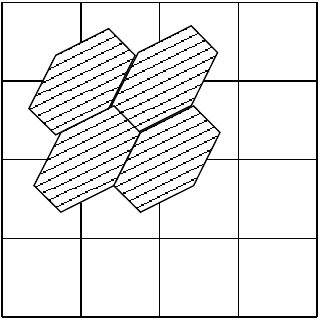_t &\hspace{2cm} \input 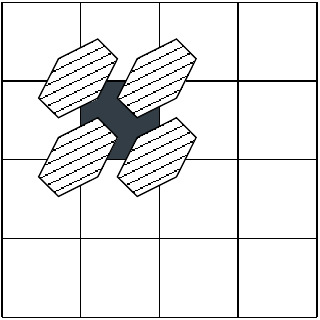_t \\
$\varphi_D(\lambda)=0 \text{ at } \lambda=1/3$ &\hspace{2cm}$\varphi_D(\frac{\lambda}{3})=1-\lambda^2$\\
\end{tabular}
\caption{}
\end{figure}
\end{rmk}


\begin{thebibliography}{566}
\bibitem[BG]{BG2009} W. Bruns, J. Gubeladze, \textit{Polytopes, rings, and K-theory}. Springer Monographs in Mathematics. Springer, Dordrecht, 2009.

\bibitem[Co]{Con1996} A. Conca, \textit{Hilbert-Kunz function of monomial ideals and binomial hypersurfaces}, Manuscripta Math. 90 (1996), no. 3, 287-300.

\bibitem[CLS]{CLS2011} D. A. Cox, J. B. Little, H. K. Schenck, \textit{Toric varieties}, Graduate Studies in Mathematics,
American Mathematical Society, Providence, RI, (2011).

\bibitem[CS]{CS1966} H. B. Curry, I. J. Schoenberg, \textit{On Polya frequency functions IV : the fundamental spline functions and their limits}. J. Anal. Math. 17, 71-107 (1966)

\bibitem[E]{Eto2002} K. Eto, \textit{Multiplicity and Hilbert-Kunz multiplicity of monoid rings}, Tokyo J. Math, 25 (2002), no. 2, 241-245.

\bibitem[Fu]{Fu1993} W. Fulton, \textit{Introduction to toric varieties}, Annals of Mathematics Studies, 131, The William H. Roever Lectures in Geometry. Princeton University Press, Princeton, NJ, (1993).

\bibitem[Ha]{Hanes2003}D. Hanes, \textit{Notes on the Hilbert-Kunz function}. J. Algebra 265 (2003), no. 2, 619-630. 

\bibitem[Har]{Har1977}R. Hartshorne, \textit{Algebraic Geometry}, Graduate Texts in Mathematics, No. 52. Springer-Verlag, New York-Heidelberg, 1977, 14-01.

\bibitem[Hu]{Hu2013}C. Huneke, \textit{Hilbert-Kunz multiplicity and the F-signature}, Commutative algebra, 485-525, Springer, New York, 2013.

\bibitem[L]{Lee1997} C. W. Lee, \textit{Subdivisions and triangulations of polytopes.} Handbook of discrete and computational geometry, 271-290, CRC Press Ser. Discrete Math. Appl., CRC, Boca Raton, FL, 1997. 52B45

\bibitem[Mi]{Mic1995} C. A. Micchelli, \textit{Mathematical aspects of geometric modelling}.In:Proceedings of CBMS-NSF regional Conference series, SIAM, Philadelphia(1995)

\bibitem[Mo1]{Mon1983} P. Monsky, \textit{The Hilbert-Kunz function.}, Math. Ann. 263 (1983), no. 1, 43-49.

\bibitem[Mo2]{M}P. Monsky, {\it Hilbert-Kunz theory for nodal cubics, 
via sheaves}, J. Algebra 346 (2011), 180–188.

\bibitem[NT]{NT}{N. Fakhruddin, V. Trivedi}, {\it Hilbert-Kunz functions
and multiplicities for full
flag varieties and elliptic curves}, J. Pure and Applied Algebra {\bf 181}
(2003) 23-52.


\bibitem[Sc]{Sch1993} E. Schulte, Tilings, \textit{ Handbook of convex geometry},Vol A, B, (1993), 899-932.

\bibitem[St1]{St1} R.P. Stanley, {\it Decomposition of rational convex polytope}, 
Ann. Discrete Math. 6 (1980), 333-342.

\bibitem[St2]{St2} R.P. Stanley, \textit{Enumerative Combinatorics vol. 1}, Cambridge University press (1997).


\bibitem[T1]{T1} V. Trivedi, {\it Hilbert-Kunz functions of a
Hirzebruch surface}, Journal of Algebra 457 (2016) 405-430.

\bibitem[T2]{T2} V. Trivedi, \textit{Hilbert-Kunz Density Function and Hilbert-Kunz Multiplicity}, {arXiv:1511.02941}, To appear in Transactions AMS.

\bibitem[T3]{T3} V. Trivedi, \textit{Asymptotic Hilbert-Kunz multiplicity}, preprint.


\bibitem[W]{W} K. Watanabe, \textit{Hilbert-Kunz multiplicity of toric rings}, 
Proc. Inst. Natural Sci. Nihon Univ.,  35 (2000), 173-177.
\bibitem[WY1]{WY2001}K. Watanabe, K. Yoshida, \textit{Hilbert-Kunz multiplicity of two-dimensional local rings}. Nagoya Math. J. 162 (2001), 87-110.

\bibitem[WY2]{WY2005}K. Watanabe, K. Yoshida, \textit{Hilbert-Kunz multiplicity of three-dimensional local rings}. Nagoya Math. J. 177 (2005), 47-75.
\end{thebibliography}
\end{document}